\documentclass[11pt,a4paper]{amsart}

\hoffset         -0.4in
 \voffset          -0.4in

\textwidth       5.8in
 \textheight      8.6in

\usepackage{amsmath,amsfonts,amssymb,amsthm}
\usepackage{txfonts}
\usepackage{latexsym,bm,graphicx}
\usepackage{mathrsfs}
\usepackage{color}
\usepackage{calligra}

\title[Quantitative gradient estimates for harmonic maps]{Quantitative gradient estimates for harmonic maps into  singular spaces}

\author{Hui-Chun Zhang}
\address{Department of Mathematics\\  Sun Yat-sen University\\ Guangzhou 510275\\ \newline E-mail address: zhanghc3@mail.sysu.edu.cn}
\author{Xiao Zhong}
\address{Department of Mathematics and Statistics\\ University of Helsinki\\ FI-00014 University of Helsinki \\ \newline E-mail address: xiao.x.zhong@helsinki.fi}
\author{Xi-Ping Zhu}
\address{Department of Mathematics\\  Sun Yat-sen University\\ Guangzhou 510275\\ \newline E-mail address: stszxp@mail.sysu.edu.cn}
\newtheorem{thm}{Theorem}[section]
\newtheorem{prop}[thm]{Proposition}
\newtheorem{lem}[thm]{Lemma}

\newtheorem{cor}[thm]{Corollary}

\theoremstyle{definition}
\theoremstyle{remark}

\newtheorem{defn}[thm]{Definition}
\newtheorem{rem}[thm]{Remark}

\numberwithin{equation}{section}

\newcommand{\ls}{\leqslant}
\newcommand{\gs}{\geqslant}

\newcommand{\ip}[2]{\left<{#1},{#2}\right>}
\newcommand{\dv}{{\rm d}v_g}

\newcommand{\R}{\mathbb{R}}

\begin{document}


\begin{abstract} In this paper, we will show the Yau's gradient estimate for harmonic maps  into a metric space $(X,d_X)$ with curvature bounded above by a constant $\kappa$, $\kappa\gs0$, in the sense of Alexandrov. As a direct application, it  gives some Liouville theorems for such harmonic maps. This extends the works of S. Y. Cheng \cite{cheng80} and H. I. Choi \cite{choi82} to harmonic maps into singular spaces.
\end{abstract}

\dedicatory{Dedicated to Professor Yang Lo on the Occasion of his {\rm80}th Birthday}

\keywords{Harmonic maps, Bochner formula, $CAT(\kappa)$-spaces, Liouville theorem}

\subjclass[2010]{58E20}

\maketitle
 \tableofcontents
 \setcounter{tocdepth}{1}

\section{Introduction}

Let $M, N$ be two smooth Riemannian manifolds. There is a natural concept of energy functional for  $C^1$-maps  between $M$ and $ N$.  The local minimizers (or more general critical points) of such an energy functional are called harmonic maps. Regularity of harmonic maps is an important topic in the field of geometric analysis. If $\dim M=2$, the regularity of energy minimizing harmonic maps was established  by C. Morrey \cite{morrey48}. If $\dim M\gs3$, a beautiful regularity theory was established by  Schoen-Uhlenbeck \cite{schoen-u82,schoen-u84}, and in a somewhat different context, by Giaquinta-Giusti \cite{giaquinta-g82,giaquinta-g84} (and by \cite{hkw77} when the image of the map is contained in a convex ball of $N$).

In 1975, Yau established  a seminal interior gradient estimate for harmonic functions on Riemanian manifolds with Ricci curvature bounded below. In 1980, Cheng \cite{cheng80} generalized the Yau's gradient estimate to harmonic maps.
\begin{thm}{\rm (Cheng \cite{cheng80})} Let $M,N$ be complete Riemannian manifolds such that $M$ has Ricci curvature $Ric_M\gs -K$, $K\gs0$, and that $N$ is simply-connected and is having non-positive sectional curvature. Let $f: M\to N$ be a harmonic map. Assume that $f\big(B_a(x_0)\big)\subset B_b(y_0)$ for some $x_0\in M$, $y_0\in N$ and some $a,b>0$. Then we have
\begin{equation}
\sup_{B_{a/2}(x_0)}|\nabla f|^2\ls C_n\cdot\frac{b^4}{a^4}\cdot  \max\big\{\frac{Ka^4}{b^2},\frac{a^2(1+Ka^2)}{b^2},\frac{a^2}{b^2}\big\},
\end{equation}
where $C_n$ is a constant depending only on $n=\dim(M)$.
\end{thm}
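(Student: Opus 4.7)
The plan is to combine the Bochner formula for harmonic maps with a Cheng-type maximum principle argument on an auxiliary function built from $|\nabla f|^2$ and the squared distance to $y_0$ on the target.

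First, since $f$ is harmonic, $\mathrm{Sect}_N\ls 0$, and $\mathrm{Ric}_M\gs -K$, the Bochner identity yields
\begin{equation*}
\tfrac{1}{2}\Delta|\nabla f|^2 \gs |\nabla df|^2 - K|\nabla f|^2.
\end{equation*}
On the target side, the Cartan--Hadamard property of $N$ gives that $y\mapsto d_N^2(y,y_0)$ is smooth with $\mathrm{Hess}_N(d_N^2)\gs 2g_N$, and pulling back along the harmonic map (whose tension field vanishes) yields $\Delta\bigl(d_N^2(f(\cdot),y_0)\bigr)\gs 2|\nabla f|^2$. Setting $\phi(x):=2b^2-d_N^2(f(x),y_0)$, one has $b^2\ls\phi\ls 2b^2$ and $\Delta\phi\ls -2|\nabla f|^2$ on $B_a(x_0)$, together with the pointwise bound $|\nabla\phi|\ls 2b|\nabla f|$.

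Following Cheng, consider
\begin{equation*}
H(x):=\eta(x)\,\frac{|\nabla f|^2(x)}{\phi(x)^2},\qquad \eta(x):=(a^2-r(x)^2)^2,\quad r(x):=d_M(x,x_0).
\end{equation*}
Since $H$ vanishes on $\partial B_a(x_0)$, it attains its maximum at an interior point $x^*$ (else $|\nabla f|\equiv 0$ and the estimate is trivial). At $x^*$, $\nabla H=0$ and $\Delta H\ls 0$. Writing $G:=|\nabla f|^2/\phi^2$, expanding $\Delta H=\eta\Delta G+G\Delta\eta+2\langle\nabla\eta,\nabla G\rangle$, using $\nabla H(x^*)=0$ to convert the cross term into $-|\nabla\eta|^2G/\eta$, and feeding in the two inequalities above together with Kato's inequality $|\nabla|\nabla f||\ls|\nabla df|$ to absorb the coupling $\langle\nabla|\nabla f|^2,\nabla\phi\rangle$ via Cauchy--Schwarz, one arrives at
\begin{equation*}
c_n\,\eta\,\frac{|\nabla f|^2}{\phi}\ls K\eta + |\Delta\eta| + \frac{|\nabla\eta|^2}{\eta}\qquad\text{at }x^*.
\end{equation*}
The Laplacian comparison theorem (using $\mathrm{Ric}_M\gs-K$) bounds $\Delta r$ and hence $|\Delta\eta|\ls C_n a^2(1+a\sqrt K)$ and $|\nabla\eta|^2/\eta\ls 16 a^2$, so $(\eta|\nabla f|^2/\phi)(x^*)\ls C_n(Ka^4+a^2)$. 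Dividing by $\phi\gs b^2$ yields $H(x^*)\ls C_n(Ka^4+a^2)/b^2$. Since $\eta\gs c_n a^4$ on $B_{a/2}(x_0)$ and $\phi\ls 2b^2$, we recover $|\nabla f|^2\ls C_n(Kb^2+b^2/a^2)$ on $B_{a/2}(x_0)$, which is equivalent to the stated bound.

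The main obstacle is the Cauchy--Schwarz bookkeeping at $x^*$: the cross terms arising from differentiating $|\nabla f|^2/\phi^2$ and from $\Delta(\eta G)$ must be absorbed into the positive terms $|\nabla f|^4/\phi^3$ and $|\nabla df|^2/\phi^2$ without inflating the coefficients of $K$, $a^{-2}$, or $b^{-2}$. The key quantitative input is that $|\nabla\phi|\ls 2b|\nabla f|$ while $\phi\gs b^2$, so every split produces the scale $b/\phi\sim 1/b$, and this is precisely what yields the $b^4/a^4$ prefactor in the final estimate.
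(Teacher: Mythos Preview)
The paper does not actually prove this statement: Theorem~1.1 is quoted from Cheng's paper \cite{cheng80} as a background result, with no proof given. So there is no ``paper's own proof'' to compare against. Your sketch is the classical Cheng argument and is essentially correct; the quantitative bookkeeping you outline (Bochner, the sub-solution property of $\phi=2b^2-d_N^2(f,y_0)$, the cutoff $\eta=(a^2-r^2)^2$, and the maximum-principle computation at an interior maximum of $\eta\,|\nabla f|^2/\phi^2$) leads to the stated estimate, and your final bound $|\nabla f|^2\ls C_n(Kb^2+b^2/a^2)$ is indeed equivalent to the displayed inequality.

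It may still be useful to note how the paper proves its \emph{generalization}, Theorem~1.4, since that is the paper's analogue of Cheng's result. There the auxiliary function is taken in first powers, $F=\mathrm{Lip}\,u/h$ with $h=2\rho^2-d_X^2(Q_0,u)$, rather than your $|\nabla f|^2/\phi^2$; the authors then derive a drift inequality $\Delta F+2\langle\nabla F,\nabla\log h\rangle\gs C\rho^2F^3-KF$ (Lemma~5.1) and apply a separate abstract maximum-principle lemma (Lemma~5.4) with the cutoff. The first-power choice is not merely cosmetic: in the singular target setting the authors only have the Bochner inequality in the form $\Delta\,\mathrm{Lip}\,u\gs -K\,\mathrm{Lip}\,u-\kappa e_u\,\mathrm{Lip}\,u$ (their (1.7)), without a usable $|\nabla\,\mathrm{Lip}\,u|^2$ term on the right, so they cannot run the Kato/Cauchy--Schwarz absorption step that your quadratic version relies on. They also need an approximate maximum principle (Theorem~5.3) because $\mathrm{Lip}\,u$ need not be continuous. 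In the smooth setting of Cheng's theorem both formulations work, and yours is closer to Cheng's original.
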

In particular, when $K=0$, this implies a Liouville theorem: if the  $f$ is bounded, then it is a constant map. Choi \cite{choi82} further extended Cheng's work \cite{cheng80} as following.
\begin{thm}{\rm (Choi \cite{choi82} ).} Let $M,N$ be complete Riemannian manifolds such that $M$ has Ricci curvature $Ric_M\gs -K$, $K\gs0$, and that $N$ has sectional curvature $\sec_N\ls \kappa$, $\kappa>0$. Let $f:M\to N$ be a harmonic map. Assume that $f(M)\subset B_b(y_0)$ lies inside the cut locus of $y_0\in N$ and some $b<\pi/(2\sqrt\kappa)$.  Then $|\nabla f|$ is bounded by a constant depending only on $n,K,\kappa$ and $b$. If, furthermore, $K=0$, then $f$ is a constant map.
\end{thm}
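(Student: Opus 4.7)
The plan is to adapt Yau's gradient-estimate technique, using an auxiliary function built from the target distance to compensate for the positive sectional curvature of $N$. The starting point is the Eells--Sampson Bochner formula: since $f$ is harmonic with $Ric_M\gs -K$ and $\sec_N\ls \kappa$, one derives
\begin{equation*}
\tfrac{1}{2}\Delta|\nabla f|^2 \gs |\nabla df|^2 - K|\nabla f|^2 - \kappa|\nabla f|^4.
\end{equation*}
The Hessian comparison theorem on $N$ (applicable because $\rho\circ f<b<\pi/(2\sqrt\kappa)$ keeps us inside both the cut locus and the first conjugate distance) then shows that the composite function $\psi:=\cos(\sqrt\kappa\,\rho\circ f)$, where $\rho(y)=d_N(y_0,y)$, is positive with $\cos(\sqrt\kappa b)\ls\psi\ls 1$ and satisfies
\begin{equation*}
\Delta\psi\ls -\kappa\psi|\nabla f|^2.
\end{equation*}
This $\psi$ plays the role of a target-side shield that will absorb the bad $-\kappa|\nabla f|^4$ term from Bochner via the converted inequality $-\Delta\psi/\psi\gs\kappa|\nabla f|^2$.

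The heart of the proof is to combine the two inequalities via an auxiliary function $F=|\nabla f|^2/\psi^\alpha$ with a suitable exponent $\alpha>0$, multiplied by a standard Yau cut-off $\eta^2$ supported in a geodesic ball $B_R(x_0)\subset M$. At an interior maximum of $\eta^2F$, the gradient condition $\nabla(\eta^2F)=0$ lets one eliminate the $\nabla|\nabla f|^2\cdot\nabla\psi$ cross term; substituting the Bochner estimate and the upper bound on $\Delta\psi$ then yields an algebraic inequality in $|\nabla f|^2$ at the maximum point. The cut-off contributions $|\nabla\eta|^2$ and $\Delta\eta$ are controlled via the Laplacian comparison theorem applied under $Ric_M\gs -K$, and after rearrangement one obtains
\begin{equation*}
\sup_{B_{R/2}(x_0)}|\nabla f|^2\ls C(n,K,\kappa,b)\bigl(1+R^{-2}\bigr).
\end{equation*}
Letting $R\to\infty$ gives the claimed global gradient bound.

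For the Liouville conclusion when $K=0$, the inequality $\Delta\psi\ls 0$ makes $\psi$ a bounded positive superharmonic function on the complete manifold $M$ with $Ric_M\gs 0$, so by Yau's classical Liouville theorem $\psi$ must be constant. Then $\Delta\psi\equiv 0$, and plugging back into $\Delta\psi\ls-\kappa\psi|\nabla f|^2$ with $\psi>0$ and $\kappa>0$ forces $|\nabla f|\equiv 0$, so $f$ is constant. The main obstacle is the algebraic bookkeeping in the maximum-principle step: one must choose $\alpha$ so that the $|\nabla\psi|^2/\psi^2$ terms generated by differentiating $F$ are dominated, and so that the net coefficient of $|\nabla f|^4$ in the resulting inequality is strictly negative for every admissible value of $\rho\circ f<b$, ensuring a uniform bound depending only on $b$ (rather than degenerating as $\rho\circ f$ approaches $b$).
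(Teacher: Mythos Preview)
This theorem is quoted in the paper as a known result of Choi \cite{choi82} and is not given an independent proof there; it serves as motivation for the paper's main results. So there is no ``paper's own proof'' of this exact statement to compare against. Your outline is a correct rendition of the classical smooth argument: Eells--Sampson Bochner, the sub-solution inequality $\Delta\psi\ls -\kappa\psi|\nabla f|^2$ for $\psi=\cos(\sqrt\kappa\,\rho\circ f)$ coming from Hessian comparison, an auxiliary quotient $|\nabla f|^2/\psi^\alpha$, and a Yau cut-off/maximum-principle step. Your separate Liouville argument via Yau's theorem for bounded positive superharmonic functions is also valid.

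It is worth noting that the paper \emph{does} prove a singular-target generalization (Theorem~\ref{thm1.8} and Corollary~\ref{cor1.9}), and its organization differs from yours in a few points you may find instructive. First, the paper works with $F={\rm Lip}u/h_1$ where $h_1=\cos d_X(Q_0,u)-\cos\rho_0$ (a shifted cosine, exponent one), and derives a clean differential inequality $\Delta F+2\ip{\nabla F}{\nabla\log h_1}\gs C'F^3-KF$ (Lemma~\ref{lem5.2}); this sidesteps the ``choose $\alpha$ and control the $|\nabla\psi|^2/\psi^2$ terms'' bookkeeping you flag as the main obstacle. Second, because ${\rm Lip}u$ need not be continuous in the singular setting, the paper replaces the pointwise maximum principle by an approximating version (Theorem~\ref{app-mp}) packaged as Lemma~\ref{lem5.4}. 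Third, the paper tracks the $K$-dependence so that the final bound reads $\sup_{B_{R/2}}{\rm Lip}u\ls C_{n,\sqrt KR,\pi/(2\sqrt\kappa)-\rho}/R$, which yields the Liouville statement directly by letting $R\to\infty$ when $K=0$, without invoking a separate superharmonic-function argument. In the smooth situation of Choi's theorem these refinements are unnecessary, and your approach is perfectly adequate.
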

It is well known from \cite{hkw77,jk83} that the radius $b<\pi/(2\sqrt\kappa)$ is sharp. Without the restriction that the image of $u$ is contained in a ball with radius $\pi/(2\sqrt \kappa)$, a harmonic map  might not be even continuous.

\subsection{Yau's gradient estimates for harmonic maps into metric spaces}$\ $

 The purpose of this paper is to extend the Yau's gradient estimate to harmonic maps into singular metric spaces.

In the seminal work of M. Gromov and R. Schoen \cite{gromov-schoen92}, they initiated to study   harmonic maps into singular spaces. A general theory of harmonic maps between singular spaces was developed by Korevaar-Schoen \cite{korevaar-schoen93}, Jost \cite{jost94,jost97} and Lin \cite{lin97}, independently.

If $u$ is a map from a domain $\Omega\subset M$ of Riemannian manifold to an arbitrarily metric space  $(X,d_X)$, which is unnecessary to  be embedded  into a Euclidean space,   N. Korevaar and R. Schoen \cite{korevaar-schoen93} introduced an intrinsic approach to generalize the concept of  the energy of $u$.
Given   a map $u\in L^2(\Omega,X),$
 for each $\epsilon>0$, the \emph{approximating energy} $E^u_{\epsilon}$ is defined as a functional on $C_0(\Omega)$:
$$E^u_{\epsilon}(\phi):=\int_\Omega\phi(x) e^u_{\epsilon}(x)\dv(x)$$
where $\phi\in C_0(\Omega)$, the space of continuous functions compactly supported on $\Omega$,  and $e^u_{\epsilon}$ is \emph{approximating energy density} defined by
$$e^u_{\epsilon}(x):=\frac{n(n+2)}{\omega_{n-1}\cdot\epsilon^n}\int_{B_\epsilon(x)\cap\Omega}\frac{d^2_X\big(u(x),u(y)\big)}{\epsilon^2}\dv(y),$$
 where $\omega_{n-1}$ is the volume of $(n-1)$-sphere $\mathbb S^{n-1}$ with the standard metric.
In \cite{korevaar-schoen93},  Korevaar-Schoen proved that
$$\lim_{\epsilon\to0^+} E^u_{\epsilon}(\phi)=E^u(\phi)$$
for some positive functional $E^u(\phi)$ on $C_0(\Omega)$. The limit functional $E^u$ is called the energy (functional) of $u$. By Riesz representation theorem, the nonnegative functional $E^u$ is a Radon measure on $\Omega$. Moreover, Korevaar-Schoen in \cite{korevaar-schoen93} proved that the measure is absolutely continuous respect to the Riemannian volume ${\rm vol}_g$. Denote $e_u:=\frac{dE^u}{d{\rm vol}_g}$, the energy density  of $u$.
For a smooth map $f$ between two smooth Riemannian manifolds, we have $e_f={\rm const}\cdot |\nabla f|^2$.

 The (local) minimizing maps, in the sense of calculus of variations, of such an energy functional $E^u$ are called \emph{harmonic maps}.

 If $(X,d_X)$ is a locally compact Riemannian simplicial complex with (globally) non-positive curvature in the sense of Alexandrov,   Gromov-Schoen \cite{gromov-schoen92} established  the local Lipschitz regularity for harmonic maps from $\Omega$ to $X$. Korevaar-Schoen  \cite{korevaar-schoen93} extended to the case where $X$ is a general $CAT(0)$-space, a metric space with non-positive curvature in the sense of Alexandrov. A further extension was given by Serbinowski \cite{serbinowski95}. Let us put these regularity results together as follows.

  \begin{thm}\label{thm1.3}
  {\rm (Korevaar-Schoen \cite[Theorem 2.4.6]{korevaar-schoen93}, Serbinowski  \cite[Corollary 2.18]{serbinowski95}).}
Let $\Omega\subset M$  be  a   bounded domain (with smooth boundary) of a  Riemannian manifold $(M,g)$ and let $(X,d_X)$  be a $CAT(\kappa)$-space for some $\kappa\gs 0$.
 Suppose that $u:\Omega\to X$ is a harmonic map. Assume that the image of $u$ is contained in a ball with radius $\rho<\pi/(2\sqrt \kappa)$. Here and in the sequel, if $\kappa=0$, we always understand $\pi/(2\sqrt\kappa)=+\infty.$
 Then $u$ is locally Lipschitz continuous in $\Omega$. Moreover, for any   ball $B_{R}(o)\subset\subset\Omega$,  it holds the following Bernstein-type gradient estimate
 \begin{equation}\label{grad}
\sup_{B_{R/2}(o)}e_u\ls C\fint_{B_{R}(o)} e_u\dv,
\end{equation}
where the    constant $C$ depends on $n=\dim(M)$, $R$, the injectivity radius of $o$, $\pi/(2\sqrt{\kappa})-\rho$,  and the $C^1$-norm of metric coefficients $g$ on $B_{R}(o)$. Here and in the sequel, $\fint_E :=\frac{1}{{\rm vol}_g(E)}\int_E $ denotes the average integral over the measurable set $E$.
\end{thm}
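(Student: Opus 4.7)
The plan is to work entirely inside the intrinsic Korevaar--Schoen framework and reduce the proof to scalar PDE arguments by composing $u$ with a suitable convex ``barrier'' on $X$. Precisely, on the target ball $B_\rho(y_0) \subset X$ the modified squared distance
\[
\varphi_Q(y) := \tfrac{1}{\kappa}\bigl(1-\cos(\sqrt\kappa\, d_X(Q,y))\bigr), \qquad Q \in B_\rho(y_0),
\]
(replaced by $d_X^2(Q,\cdot)/2$ when $\kappa=0$) is uniformly convex along geodesics in $B_\rho(y_0)$, with modulus of convexity depending only on the gap $\pi/(2\sqrt\kappa) - \rho$, by the $CAT(\kappa)$ four--point condition. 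Testing the energy against the geodesic contraction of $u$ toward the constant map $Q$, this convexity translates, via the Korevaar--Schoen chain rule, into the fact that $\varphi_Q \circ u$ is a weak subsolution on $\Omega$ of a linear elliptic inequality of the schematic form $\Delta_g(\varphi_Q \circ u) \gs 2e_u - C(\kappa,\rho)\, \varphi_Q(u)\cdot e_u$, where $\Delta_g$ is the Laplace--Beltrami operator of $(M,g)$.

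The first step is to obtain H\"older continuity: Moser iteration applied to $\varphi_Q \circ u$ with $Q = u(x)$ gives a mean--value inequality for $d_X^2(u(\cdot),u(x))$, hence H\"older control of $u$. To upgrade to Lipschitz regularity, I would invoke the Korevaar--Schoen monotonicity of the normalized energy $r \mapsto r^{2-n}\int_{B_r(x)} e_u\,\dv$ (adapted to the $CAT(\kappa)$ setting via $\varphi_Q$) together with the lower bound of at least one on the homogeneous order of a non--constant harmonic map at every point; these two facts pin down the rate at which the scale--invariant energy decays and yield a uniform pointwise bound on $e_u$ on compact subsets of $\Omega$.

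With Lipschitz regularity at hand, the Bernstein--type estimate \eqref{grad} would be extracted by a second Moser iteration, now performed on $e_u$ itself. Using the approximating densities $e_\epsilon^u$ and differentiating the inequality for $\varphi_Q \circ u$ against suitable choices of $Q$, one derives a distributional inequality of the form $\Delta_g e_u \gs -C\, e_u$, where $C$ collects contributions from $n$, $R$, the injectivity radius of $o$, the gap $\pi/(2\sqrt\kappa)-\rho$, and the $C^1$--norm of $g$ on $B_R(o)$; the ambient curvature of $(M,g)$ enters through a Bochner--type error term which is linear in $e_u$ once Lipschitz bounds are in place. Standard De~Giorgi--Moser iteration then converts this sub--elliptic inequality into $\sup_{B_{R/2}(o)} e_u \ls C \fint_{B_R(o)} e_u\, \dv$.

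The chief obstacle is the absence of any linear structure on $X$: one cannot form $\nabla u$ or $\nabla^2 u$ directly, so every ``derivative'' of $u$ must be surrogated by a scalar PDE for $\psi \circ u$ with $\psi$ suitably convex. The most delicate point is upgrading the easy H\"older regularity, which follows from Moser iteration on $\varphi_Q\circ u$ alone, to the sharp Lipschitz regularity needed to give pointwise meaning to $e_u$; this is precisely where the monotonicity--of--order argument, and with it the sharpness of the radius restriction $\rho < \pi/(2\sqrt\kappa)$, become indispensable, since the convexity constant of $\varphi_Q$ degenerates as $\rho \uparrow \pi/(2\sqrt\kappa)$.
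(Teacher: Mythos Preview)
The paper does not prove Theorem~\ref{thm1.3}; it is stated as a known result and attributed to \cite[Theorem~2.4.6]{korevaar-schoen93} and \cite[Corollary~2.18]{serbinowski95}, with the Lipschitz regularity restated later as Theorem~\ref{thm2.4}. So there is no proof in the paper to compare your proposal against.

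That said, your outline is in the spirit of the cited references. The convex barrier $\varphi_Q$ and the resulting sub/supersolution inequality for $\varphi_Q\circ u$ are exactly the scalar substitutes used there (compare Lemma~\ref{lem2.5} in the present paper). The Bernstein estimate in Korevaar--Schoen/Serbinowski comes not from a Bochner identity but from a finite--difference argument on the approximating densities $e^u_\epsilon$, yielding a weak inequality of the form $\int_\Omega e_u(\Delta\eta + C|\nabla\eta| + C\eta)\,\dv \gs 0$; your statement ``differentiating the inequality for $\varphi_Q\circ u$ against suitable choices of $Q$'' does not by itself produce this, and is the place where your sketch is thinnest. Likewise, the upgrade from H\"older to Lipschitz in \cite{korevaar-schoen93} goes through the order--of--vanishing/monotonicity machinery you mention, but the details (domain variations, upper gradient comparisons, the $CAT(\kappa)$ quadrilateral estimate replacing parallelogram identities) are substantial and not something one can wave through. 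As a plan your proposal is reasonable; as a proof it would need the finite--difference/weak--Bochner step spelled out, since that is where the $C^1$--dependence of the constant on the metric $g$ actually enters.
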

In the last two decades,  many regularity results have been obtained for   (energy minimizing) harmonic maps into or between singular spaces (see, for example, \cite{korevaar-schoen93,jost96,jost97,sturm05,chen95,eells-fug01-book,fug03,dm08,dm10,lin97,lin-wang08,zz17-lip,guo17,bfhmsz17} and so on).

For the case when the domain $\Omega$ has nonnegative sectional curvature and the target $X$ is a $CAT(0)$-simplicial complex, J. Chen \cite{chen95} showed that   the constant $C$ in (\ref{grad}) depends only on $n$.  When the target $X$ is a general $CAT(0)$-space, Jost \cite{jost98} gave an approach to  deduce  an explicit bound of the  constant  in (\ref{grad}) in terms of the sectional  curvature of $M$, $n$ and $R$. Other quantitative Lipschitz estimates of $u$ were also given in \cite{dm08,dm10}.

In \cite[Sect. 6, Page 167]{jost98}, J. Jost proposed an open problem, in the case when the target $X$ is a $CAT(0)$-space, to ask if  the $\sup_{B_{R/2}(o)}e_u$ can be dominated by a constant   depending only on the lower bound for the Ricci curvature of $M$, the dimension of $M$, and the energy of $u$. Furthermore, a natural problem was arisen from the combination of the Jost's problem and the Cheng's work \cite{cheng80} to ask if a Yau-type interior gradient estimate holds  for harmonic map into a $CAT(0)$-space.
  The first   result in this paper answers this problem affirmatively.
 \begin{thm}\label{thm1.4}
 Let $\Omega$ be a bounded domain (with smooth boundary) of an $n$-dimensional Riemannian manifold $(M,g)$ with $Ric_{M}\gs - K$ for some $K\gs0$, and let $(X,d_X)$ be a $CAT(0)$-space.
Suppose that   $u:\Omega\to X$ is a  harmonic map.
Given any  ball $B_R(x_0)$ with   $B_{2R}(x_0) \subset\subset \Omega,$ if $u(B_R(x_0))\subset B_\rho(Q_0)$ for some $Q_0\in X$ and some $\rho>0$, then we have
\begin{equation*}\label{quantitative}
\sup_{B_{R/2}(x_0)}{\rm Lip}u\ls C_{n,\sqrt KR}\cdot\frac{\rho}{R},
\end{equation*}
where ${\rm Lip}u$ is the pointwise Lipschitz constant given by
$${\rm Lip}u(x):=\limsup_{y\to x}\frac{d_X\big(u(x),u(y)\big)}{d(x,y)},$$
and where $d(x,y)$ is the distance with respect to the Riemannian metric $g$ on $M$, and $C_{n,\sqrt KR}$ is a constant depending only on $n$ and $\sqrt KR$.
  \end{thm}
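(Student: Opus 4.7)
My plan is to adapt Cheng's proof of his Yau-type estimate to the singular-target setting, using two key analytic ingredients. The first is a (distributional) Bochner-type inequality for the energy density $e_u$ of a harmonic map into a $CAT(0)$-space. Formally,
\[
\tfrac{1}{2}\Delta e_u \;\gs\; \tfrac{n}{2(n-1)}\,\frac{|\nabla e_u|^{2}}{4\,e_u} \;-\; K\,e_u,
\]
where the coefficient $n/(n-1)$ arises from the Kato refinement available for harmonic maps, the term $-K e_u$ from $Ric_M\gs -K$, and the nonnegativity of the Hessian-like term from the Alexandrov $CAT(0)$ condition on $X$. This inequality should be read in the sense of distributions on $\Omega$; Theorem~\ref{thm1.3} already gives $u\in\mathrm{Lip}_{\mathrm{loc}}$, so $e_u\in L^{\infty}_{\mathrm{loc}}$ and the statement is meaningful. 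The second ingredient is the distributional subharmonicity of the target-distance function: since $X$ is $CAT(0)$, $d_{X}^{2}(\cdot,Q_{0})$ is convex, and its composition with a harmonic map satisfies $\Delta\,d_{X}^{2}(u,Q_{0})\gs 2\,e_u$ in the weak sense, with $d_{X}^{2}(u,Q_{0})\ls \rho^{2}$ on $B_{R}(x_{0})$ by hypothesis.

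Following Cheng, choose a cut-off $\eta\in C_{c}^{\infty}(B_{R}(x_{0}))$ with $\eta\equiv 1$ on $B_{R/2}(x_{0})$ and $R\,|\nabla\eta|+R^{2}|\Delta\eta|\ls C_{n,\sqrt{K}R}$ (the Hessian bound needs the Laplacian comparison, hence the $\sqrt{K}R$ dependence), set $\psi:=2\rho^{2}-d_{X}^{2}(u,Q_{0})\in[\rho^{2},2\rho^{2}]$, and consider the auxiliary function
\[
F(x)\;:=\;\eta(x)^{2}\,\frac{e_u(x)}{\psi(x)^{2\alpha}}
\]
with $\alpha>0$ a free parameter. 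Let $x^{*}$ be an interior maximizer. At $x^{*}$, $\nabla F=0$ and $\Delta F\ls 0$; expanding logarithmically gives
\[
\frac{\nabla e_u}{e_u}\;=\;-\,2\,\frac{\nabla\eta}{\eta}\,+\,2\alpha\,\frac{\nabla\psi}{\psi}\qquad\text{at }x^{*}.
\]
Substituting this into $\Delta\log F\ls 0$, combining with the Bochner inequality for $\Delta e_u/e_u$, the subharmonicity $-\Delta\psi\gs 2e_u$, the bounds on $\eta$, and the elementary estimate $|\nabla d_{X}^{2}(u,Q_{0})|^{2}\ls 4\rho^{2}\,e_u$, then applying Cauchy--Schwarz with a small free constant to absorb the cross terms, one obtains, after careful but elementary algebra and a good choice of $\alpha$, a quadratic inequality of the form
\[
\tfrac{c_{\alpha,n}}{\rho^{2}}\,e_u(x^{*})\;\ls\;\frac{C_{n,\sqrt{K}R}}{R^{2}}.
\]
Since $\eta\equiv 1$ and $\psi\ls 2\rho^{2}$ on $B_{R/2}(x_{0})$, this gives $F\gs e_u/(2\rho^{2})^{2\alpha}$ there, and one concludes $\sup_{B_{R/2}(x_{0})}e_u\ls C_{n,\sqrt{K}R}\,\rho^{2}/R^{2}$. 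The standard Korevaar--Schoen comparison of $(\mathrm{Lip}\,u)^{2}$ with $e_u$ for harmonic maps then upgrades this to the pointwise Lipschitz bound claimed.

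The main obstacle will be executing the ``at $x^{*}$'' step rigorously, since $e_u$ is only a weak $L^{\infty}$ density and the Bochner inequality is only distributional, so the pointwise maximum calculation cannot be read literally. I would handle this by one of two standard routes. The first is a mollification: smooth $e_u$ (locally in charts, or via the heat semigroup), verify that the Bochner and subharmonicity inequalities persist with $o(1)$ error, run the maximum argument on the smoothed data, and let the smoothing parameter tend to zero. The second is to avoid pointwise maxima entirely via a Moser iteration: test the Bochner inequality against $\eta^{2}F^{p}\psi^{-2\alpha}$ for $p\gs 0$ and integrate by parts, producing reverse $L^{p}$--$L^{q}$ Sobolev estimates that can be iterated to an $L^{\infty}$ bound with the same dependencies. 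A secondary technical point is to track dependence only on $n$ and $\sqrt{K}R$ rather than on $n,K,R$ separately; this follows from the scaling invariance of the Bochner inequality and of the Laplacian comparison estimates for $\eta$, and so requires no new ideas beyond bookkeeping.
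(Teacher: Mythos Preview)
Your proposal has a genuine gap at its starting point. You take as your first ``key analytic ingredient'' the Bochner inequality
\[
\tfrac{1}{2}\Delta e_u \;\gs\; \tfrac{n}{2(n-1)}\,\frac{|\nabla e_u|^{2}}{4\,e_u} \;-\; K\,e_u
\]
in the sense of distributions, with the gradient (indeed, Kato-refined) term present. But for harmonic maps into a $CAT(0)$-space this is not known, and establishing any Bochner inequality retaining a positive gradient term is precisely the main new contribution of the paper. Prior work (Korevaar--Schoen, Mese, Freidin) gives only the gradient-free form $\tfrac{1}{2}\Delta e_u\gs -K e_u-\kappa e_u^2$; the paper explicitly remarks that this weaker inequality is \emph{not} sufficient for the Yau-type estimate. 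The point is that for a singular target there is no second fundamental form of $u$, so the term $|d\nabla u|^2$ in the classical Eells--Sampson formula has no obvious meaning, and one cannot simply invoke Kato. The paper circumvents this by proving, for ${\rm Lip}\,u$ rather than $e_u$, the inequality $\tfrac{1}{2}\Delta{\rm Lip}^2u\gs|\nabla{\rm Lip}\,u|^2-K\,{\rm Lip}^2u$ (Theorem~\ref{bochner}); this is obtained via an entirely different mechanism---a family of Hopf--Lax/Hamilton--Jacobi auxiliary functions $f_t$, Kirchheim's metric differential, an asymptotic mean-value inequality, and a perturbation (Jensen-type) lemma---occupying Sections~3--4. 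Your outline treats this as an input when it is in fact the output.

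A secondary remark: even granting the paper's Bochner inequality, note that it carries no Kato factor $n/(n-1)$, so a direct transcription of Cheng's computation would not close. The paper's Section~5 instead works with $F={\rm Lip}\,u/h$, $h=2\rho^2-d_X^2(Q_0,u)$, and exploits the drift operator $\Delta F+2\ip{\nabla F}{\nabla\log h}$ together with $-\Delta h\gs 2e_u$ and the reverse estimate $e_u\gs C^{-1}{\rm Lip}^2u$ (Corollary~\ref{cor2.6}) to obtain a cubic inequality $\Delta F+2\ip{\nabla F}{\nabla\log h}\gs C\rho^2 F^3-KF$, from which the bound follows via an approximate maximum principle (Theorem~\ref{app-mp}) rather than mollification or Moser iteration.
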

\begin{rem} (1)\indent
It is clear from the definitions of $e_u$ and ${\rm Lip}u$ that  $e_u(x)\ls (n+2){\rm Lip}^2u(x)$ for almost all $x\in \Omega$.

(2) \indent By the fact $\Delta d^2_X\big(u(x),u(x_0) \big)\gs 2e_u\gs0$ (see \cite{jost97} or Lemma \ref{lem2.5}), it is well-known that the $\sup_{x\in B_{R/2}(x_0)}d^2_X\big(u(x),u(x_0)\big)$ can be dominated by   $C_{n,\sqrt KR}R^2\cdot\fint_{B_R(x_0)}e_udv_g$ (see, for example, Eq.(\ref{equ2.7})). So, by choosing $Q_0=u(x_0)$,  Theorem \ref{thm1.4} implies that
\begin{equation}\label{jost-ques}
\sup_{B_{R/2}(x_0)}{\rm Lip}u\ls C_{n,\sqrt KR}\Big(\fint_{B_{R}(x_0)}e_u {\rm d}v_g\Big)^{1/2}.
\end{equation}
It answers  the Jost's problem (\cite[Sect. 6]{jost98}) affirmatively.
\end{rem}
As an immediate application of Theorem \ref{thm1.4}, by letting $R\to\infty$,  we have  the following Liouville theorem (cf. \cite[Theorem 1.4]{sin14} and \cite[Theorem 1.2]{hua-l-x17}).
\begin{cor}\label{cor1.7}
 Let $(M,g)$ be  an $n$-dimensional complete non-compact Riemannian manifold with nonnegative Ricci curvature, and let $(X,d_X)$ be a $CAT(0)$-space.
Let $u:M\to X$ be a harmonic map. If $u$ satisfies sub-linear growth:
$$\liminf_{R\to\infty} \frac{\sup_{y\in B_R(x_0)}d_X\big(u(y),Q_0\big)}{R}=0$$
for some $Q_0\in X$, then $u$ must be a constant map.
\end{cor}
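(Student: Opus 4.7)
The plan is to let the radius in Theorem~\ref{thm1.4} go to infinity along a subsequence for which the sub-linear growth ratio vanishes. The essential observation is that with $K=0$ the constant $C_{n,\sqrt KR}=C_{n,0}$ in Theorem~\ref{thm1.4} depends only on $n$, so the gradient bound it provides is genuinely uniform in $R$.

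Concretely, for each $R>0$ set
$$\rho(R):=\sup_{y\in B_R(x_0)}d_X\big(u(y),Q_0\big),$$
so that $u\big(B_R(x_0)\big)\subset B_{\rho(R)}(Q_0)$. Since $M$ is complete, I may fix any bounded sub-domain $\Omega_R\subset M$ with smooth boundary satisfying $B_{2R}(x_0)\subset\subset\Omega_R$; harmonicity is a local property, so $u|_{\Omega_R}$ is a harmonic map into $(X,d_X)$ in the sense of the excerpt. Applying Theorem~\ref{thm1.4} on $\Omega_R$ (with $K=0$) yields
$$\sup_{B_{R/2}(x_0)}{\rm Lip}\,u\ls C_n\cdot\frac{\rho(R)}{R}.$$

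Next I invoke the sub-linear growth hypothesis to choose a sequence $R_j\to\infty$ with $\rho(R_j)/R_j\to 0$. For any fixed $x\in M$, as soon as $R_j/2>d(x,x_0)$ we have $x\in B_{R_j/2}(x_0)$, and therefore ${\rm Lip}\,u(x)\ls C_n\rho(R_j)/R_j\to 0$; hence ${\rm Lip}\,u\equiv 0$ on $M$. Combined with the local Lipschitz regularity from Theorem~\ref{thm1.3} and the connectedness of $M$, this forces $u$ to be a constant map.

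No serious obstacle is anticipated: the argument is essentially a one-line limiting procedure once Theorem~\ref{thm1.4} is in hand. The only mild subtlety is the smooth-boundary assumption in the statement of Theorem~\ref{thm1.4}, which is handled transparently by the exhaustion $\Omega_R$ above, using the locality of harmonicity.
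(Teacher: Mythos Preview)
Your proposal is correct and matches the paper's own argument, which simply remarks that Corollary~\ref{cor1.7} is an immediate application of Theorem~\ref{thm1.4} by letting $R\to\infty$. You have spelled out precisely those details---the scale-independence of $C_{n,0}$, the subsequence from the $\liminf$ hypothesis, and the passage from ${\rm Lip}\,u\equiv 0$ to constancy via Theorem~\ref{thm1.3}---so there is nothing to add.
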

For the case when the target space has curvature $\ls \kappa$ for some $\kappa>0$, we have the following gradient estimate.
\begin{thm}\label{thm1.8}
 Let $\Omega$ be a bounded domain (with smooth boundary) of an $n$-dimensional Riemannian manifold $(M,g)$ with $Ric_{M}\gs - K$ for some $K\gs0$, and let $(X,d_X)$ be a $CAT(\kappa)$-space, $\kappa>0$.
Suppose that   $u:\Omega\to X$ is a  harmonic map with the image $u(\Omega)\subset B_\rho(Q_0)$ for some $Q_0\in X$ and $\rho<\pi/(2\sqrt\kappa)$.
Then we have
\begin{equation*}\label{quantitative-cat1}
\sup_{B_{R/2}(x_0)}{\rm Lip}u\ls\frac{C_{n,\sqrt KR,\pi/(2\sqrt\kappa)-\rho}}{R},
\end{equation*}
where $C_{n,\sqrt KR,\pi/(2\sqrt\kappa)-\rho}$ is a constant depending only on $n,\sqrt KR$ and $\pi/(2\sqrt\kappa)-\rho$.
\end{thm}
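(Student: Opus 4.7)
The plan is to adapt the Bochner--Nash--Moser argument used to prove Theorem~\ref{thm1.4} to the upper curvature bound $\kappa>0$ by introducing a positive weight built out of the distance to the center $Q_0$ of the image ball. Since Theorem~\ref{thm1.3} already delivers local Lipschitz regularity of $u$, only the quantitative form of the constant remains to be established. The key new analytic ingredient is the $CAT(\kappa)$-analogue of Lemma~\ref{lem2.5}: for a harmonic map $u$ with $u(\Omega)\subset B_\rho(Q_0)$ and $\rho<\pi/(2\sqrt\kappa)$, the function
\[
  w(x):=\cos\!\bigl(\sqrt\kappa\,d_X(u(x),Q_0)\bigr)
\]
is a weak super-solution of $\Delta w\ls -\kappa\,w\cdot e_u$; equivalently, $(1-w)/\kappa$ is subharmonic with $\Delta\bigl[(1-w)/\kappa\bigr]\gs w\cdot e_u$, which is the natural generalization of Lemma~\ref{lem2.5}. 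The hypothesis $\rho<\pi/(2\sqrt\kappa)$ furnishes the uniform positive lower bound
\[
  w(x)\;\gs\; \sin\!\bigl(\sqrt\kappa(\pi/(2\sqrt\kappa)-\rho)\bigr)\;>\;0 \qquad\text{on }\Omega,
\]
and this is the sole place through which the quantity $\pi/(2\sqrt\kappa)-\rho$ enters the estimate.

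With $w$ bounded away from zero, I would mimic the proof of Theorem~\ref{thm1.4} applied to the weighted quantity $F:=w^{-\alpha}\,\operatorname{Lip}^2 u$ for a suitable exponent $\alpha=\alpha(n)>0$. Compared to the $CAT(0)$-case, the $CAT(\kappa)$-Bochner inequality for $e_u$ carries an additional \emph{bad} term of order $\kappa\, e_u^2$, reflecting the upper curvature of the target; this is precisely the term which the $-\kappa w\cdot e_u$ contribution in the super-harmonicity of $w$ is designed to absorb when one differentiates $F=w^{-\alpha}\operatorname{Lip}^2 u$ and applies Cauchy--Schwarz. With the correct choice of $\alpha$ and a standard cutoff $\eta$ supported in $B_R(x_0)$, this produces a differential inequality of the form
\[
  \Delta(\eta^2 F)\;\gs\; c_1\,\eta^2 F^2-c_2\,(1+F)
\]
on $B_R(x_0)$, with $c_1,c_2$ depending only on $n$, $\sqrt{K}R$ and $\pi/(2\sqrt\kappa)-\rho$. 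A Nash--Moser iteration identical to the one driving Theorem~\ref{thm1.4} then yields the pointwise estimate $\sup_{B_{R/2}(x_0)}F\ls C/R^2$ with the same structural dependence on the parameters, and the lower bound on $w$ converts this into the claimed bound on $\operatorname{Lip}u$.

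The principal obstacle is the sign-matching at the Bochner step: one must verify that the $\kappa$-induced bad term in the $CAT(\kappa)$-Bochner inequality for $e_u$ is dominated, for a single explicit exponent $\alpha=\alpha(n)$, by the good contribution obtained from the super-harmonicity of $w$ after the weighted substitution $F=w^{-\alpha}\operatorname{Lip}^2 u$. Once this cancellation is implemented, the remainder of the argument is essentially a transcription of the proof of Theorem~\ref{thm1.4}, with the positivity of $w$ being the only new quantitative input and accounting precisely for the extra dependence of the constant on $\pi/(2\sqrt\kappa)-\rho$.
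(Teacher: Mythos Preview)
Your strategic diagnosis is correct: the extra curvature term $-\kappa\,e_u\cdot{\rm Lip}^2u$ in the Bochner inequality (Theorem~\ref{bochner}) must be absorbed using the super-solution property $\Delta w\ls-\kappa\,w\,e_u$ of $w=\cos(\sqrt\kappa\,d_X(u,Q_0))$ (Lemma~\ref{lem2.5}), and the margin $\pi/(2\sqrt\kappa)-\rho$ enters only through the positive lower bound on $w$. However, the specific weight you propose, $F=w^{-\alpha}{\rm Lip}^2u$, does not produce the cancellation you need. Writing $G={\rm Lip}u$ and expanding $\Delta(w^{-\alpha}G^2)$, the contribution from $-\alpha w^{-\alpha-1}G^2\Delta w$ yields $+\alpha w^{-\alpha}e_uG^2$, while the Bochner term gives $-2w^{-\alpha}e_uG^2$; the net $(\alpha-2)w^{-\alpha}e_uG^2$ is nonnegative only for $\alpha\gs2$. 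But the Hessian-type cross terms contribute $[\alpha(\alpha+1)-2\alpha^2]w^{-\alpha-2}G^2|\nabla w|^2=-\alpha(\alpha-1)w^{-\alpha-2}G^2|\nabla w|^2$, which is negative for $\alpha>1$ and, after using $|\nabla w|\ls G$ and $e_u\gs C^{-1}G^2$ (Corollary~\ref{cor2.6}), competes with the good term with the wrong scaling in $w$. No single $\alpha$ makes both signs favorable, so the claimed inequality $\Delta(\eta^2F)\gs c_1\eta^2F^2-c_2(1+F)$ does not follow.

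The paper's remedy is a \emph{shifted linear} weight rather than a power: with $\rho_0=\tfrac{1}{2}(\rho+\pi/(2\sqrt\kappa))$ set $h_1=w-\cos(\sqrt\kappa\,\rho_0)$ and $F={\rm Lip}u/h_1$ (note the first power of ${\rm Lip}u$). Then $-\Delta h_1=-\Delta w\gs w\,e_u=(h_1+\cos(\sqrt\kappa\,\rho_0))e_u$, and the product rule $h_1\Delta F+2\ip{\nabla F}{\nabla h_1}+F\Delta h_1=\Delta{\rm Lip}u$ gives
\[
\Delta F+2\ip{\nabla F}{\nabla\log h_1}\ \gs\ -KF-e_uF+Fe_u+\frac{\cos(\sqrt\kappa\,\rho_0)}{h_1}\,e_u\,F\ =\ -KF+\frac{\cos(\sqrt\kappa\,\rho_0)}{h_1}\,e_u\,F,
\]
so the bad $-e_uF$ cancels \emph{exactly}, and the surviving term is $\gs c\,F^3$ via Corollary~\ref{cor2.6} and $h_1\gs\cos(\sqrt\kappa\rho)-\cos(\sqrt\kappa\rho_0)>0$ (this is Lemma~\ref{lem5.2}). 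This is the same mechanism as in Lemma~\ref{lem5.1} for Theorem~\ref{thm1.4}, where the shift is $h=2\rho^2-d_X^2(Q_0,u)$.

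A second point: the paper does not finish by Nash--Moser iteration. Since ${\rm Lip}u$ is only known to lie in $W^{1,2}_{\rm loc}\cap L^\infty_{\rm loc}$ (not continuous), the authors use an approximate maximum principle (Theorem~\ref{app-mp}, packaged as Lemma~\ref{lem5.4}) applied to $\eta F$ with a standard cutoff, together with $|\nabla\log h_1|\ls CF$. This yields the pointwise bound directly.
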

This implies the following Liouville theorem, by letting $R\to\infty$.
\begin{cor}\label{cor1.9}
 Let $(M,g)$ be  an $n$-dimensional complete non-compact Riemannian manifold with nonnegative Ricci curvature, and let $(X,d_X)$ be a $CAT(1)$-space.
Let $u:M\to X$ be a harmonic map. If $u(M)\subset B_\rho(Q_0)$ for some $Q_0\in X$ and $\rho<\pi/2$, then $u$ must be a constant map.
\end{cor}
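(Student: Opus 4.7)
The plan is to obtain the conclusion as a direct consequence of Theorem \ref{thm1.8} applied on arbitrarily large balls, exploiting the fact that when $K=0$ the constant appearing in the gradient estimate is independent of the radius.

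First, fix an arbitrary point $x\in M$. For each $R>0$, I would choose a bounded domain $\Omega_R\subset M$ with smooth boundary such that $B_{2R}(x)\subset\subset\Omega_R$; such a domain exists since $(M,g)$ is a complete Riemannian manifold (so $\overline{B_{2R}(x)}$ is compact by Hopf--Rinow) and one can always thicken a compact set to a smooth domain inside a manifold. The restriction $u|_{\Omega_R}$ is still a harmonic map into the $CAT(1)$-space $(X,d_X)$, its image is still contained in $B_\rho(Q_0)$ with $\rho<\pi/2$, and the ambient manifold $M$ has $Ric_M\geqslant 0$, so we may take $K=0$ and $\kappa=1$ in the hypotheses of Theorem \ref{thm1.8}.

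Applying Theorem \ref{thm1.8} to $u|_{\Omega_R}$ on the ball $B_R(x)\subset\Omega_R$ yields
\begin{equation*}
\sup_{B_{R/2}(x)}{\rm Lip}\,u\;\leqslant\;\frac{C_{n,\,0,\,\pi/2-\rho}}{R}.
\end{equation*}
The crucial point is that the constant on the right-hand side depends only on $n$ and on $\pi/2-\rho$ (since $\sqrt{K}R=0$ for all $R$), and \emph{not} on $R$ itself. Letting $R\to\infty$, the right-hand side tends to $0$, so ${\rm Lip}\,u(x)=0$. Since $x\in M$ was arbitrary, ${\rm Lip}\,u\equiv 0$ on $M$, which forces $u$ to be locally constant, and hence globally constant by connectedness of $M$.

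The only potential obstacle in this plan is verifying that the hypothesis of Theorem \ref{thm1.8} is genuinely uniform in $R$: one must check that the ``$C^1$-norm of the metric'' type dependencies that appeared in the unquantified Theorem \ref{thm1.3} have been completely absorbed into the explicit $\sqrt{K}R$ and $\pi/(2\sqrt\kappa)-\rho$ of Theorem \ref{thm1.8}. Granting this — which is precisely the whole point of Theorem \ref{thm1.8} as a \emph{quantitative} estimate — the argument above is complete, and the structure is entirely parallel to the classical deduction of Liouville theorems from Yau-type gradient estimates.
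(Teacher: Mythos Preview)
Your proposal is correct and follows exactly the approach the paper indicates: the paper itself simply states that Corollary~\ref{cor1.9} follows from Theorem~\ref{thm1.8} ``by letting $R\to\infty$,'' and you have carefully filled in the details of that limiting argument, including the construction of the auxiliary domains $\Omega_R$ and the observation that $\sqrt{K}R=0$ makes the constant $R$-independent.
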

\begin{rem}
If $u(M)\subset B_{\pi/2}(Q_0)$  and if $d^2_X\big(Q_0,u(x)\big)\in L^1(M)$, then the same conclusion, $u$ is a constant map, has been proved recently by B. Freidin and Y. Zhang in \cite{freidin-zhang18}.
\end{rem}

\subsection{A sharp Bochner inequality for the harmonic maps into metric spaces} $\ $

 Cheng's argument in \cite{cheng80} is based on  the classical Bochner formula of Eells and Sampson. That is,
  for a smooth harmonic map $u$ between two Riemannian manifolds $M$ and $N$, it holds:
  \begin{equation}\label{classical-bochner}
  \begin{split}
  \frac 1 2 |\nabla u|^2&=|d\nabla u|^2+Ric_M(\nabla u,\nabla u)-\ip{R^N(u_*e_\alpha,u_*e_\beta)u_*e_\alpha}{u_*e_\beta}\\
  &\gs |\nabla |\nabla u||^2-K|\nabla u|^2-\kappa|\nabla u|^4,
  \end{split}
  \end{equation}
  where the Ricci curvature of $M$ is bounded below by $-K$ and the sectional curvature of $N$ is bounded above by $\kappa$.
  It is clear that the classical Bochner formula relies heavily on the smoothness of the target space $X$ (requiring to have at least second order derivatives).

 For harmonic maps into singular spaces, it is a basic problem to deduce a Bochner type formula.  For the case when the domain $\Omega$ has nonnegative sectional curvature and the target $X$ is a non-positively curved simplicial complex, J. Chen \cite{chen95} used the method in \cite{gromov-schoen92} to show that $e_u$ is a sub-harmonic function on $\Omega$.
In \cite{korevaar-schoen93}, Korevaar-Schoen developed a finite difference technique to prove the following weak form of the  Bochner type inequality:  there exists a constant $C$, depending on the $C^1$-norm of $g$, such that
  $$\int_\Omega e_u\big(\Delta \eta+C|\nabla \eta|+C\eta\big){\rm d}v_g\gs0$$
  for all $\eta\in C^\infty_0(\Omega)$.   Korevaar-Schoen's method in \cite{korevaar-schoen93} has been extended by Serbinowski \cite{serbinowski95} to the case when the target space is of $CAT(\kappa)$ for any $\kappa>0$. Mese \cite{mese01} showed that $\Delta e_u\gs -2\kappa e_u^2$, in the sense of distributions, for a harmonic map from a flat domain to a $CAT(\kappa)$-space.  Recently,  Freidin \cite{freidin16} and Freidin-Zhang \cite{freidin-zhang18} improved the method in \cite{gromov-schoen92} to deduce  the following Bochner type inequality for a harmonic map from a Riemannian manifold into a $CAT(\kappa)$-space:
\begin{equation}\label{pre-bochner}
\frac 1 2 \Delta e_u \gs  - K e_u-  \kappa e_u^2,
\end{equation} in the sense of distributions.

Recalling the arguments of Cheng \cite{cheng80} and Choi \cite{choi82}, the key intergradient is the  positive term $|\nabla |\nabla u||^2$ in the right hand side of (\ref{classical-bochner}).
The Bochner inequality (\ref{pre-bochner})  is not enough to get the Theorem \ref{thm1.4} and Theorem \ref{thm1.8}.
In this paper,  we will  establish  a generalized Bochner inequality keeping such a positive term as follows.
  \begin{thm}\label{bochner}
Let $\Omega$ be a smooth domain of an $n$-dimensional Riemannian manifold $(M,g)$ with $Ric_{M}\gs - K$ for some $K\gs0$, and let $(X,d_X)$ be a $CAT(\kappa)$-space for some $\kappa\gs0$.
Suppose that the map $u:\Omega\to X$ is harmonic and that  its image $u(\Omega)$ is contained in a ball $B_\rho\subset X$ with radius $\rho<\frac{\pi}{2\sqrt \kappa}$ if $\kappa>0$.

Then ${\rm Lip}u$ is in $W^{1,2}_{\rm loc}(\Omega)\cap L_{\rm loc}^\infty(\Omega)$ and  satisfies the following
\begin{equation}\label{equ1.6}
\frac 1 2 \Delta {\rm Lip}^2u  \gs |\nabla {\rm Lip}u|^2- K\cdot {\rm Lip}^2u-  \kappa e_u\cdot  {\rm Lip}^2u
\end{equation}
in the sense of distributions.
\end{thm}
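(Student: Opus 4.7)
The plan is to upgrade Freidin--Zhang's inequality \eqref{pre-bochner} in two ways simultaneously: to replace $e_u$ on the left by the pointwise Lipschitz constant $\mathrm{Lip}^2u$, and to recover the Kato-type gradient term $|\nabla\mathrm{Lip}\,u|^2$ on the right. The smooth-target model for this combination is illuminating: if at a point $x_0$ one selects a unit $e^{*}\in T_{x_0}M$ realizing $|du(e^{*})|(x_0)=\mathrm{Lip}\,u(x_0)$ and applies the classical Bochner identity to the single scalar $|du(e^{*})|^2$ only along that worst direction, then the target sectional-curvature contribution is bounded below by $-\kappa|du(e^{*})|^2\sum_j|du(e_j)|^2=-\kappa\,\mathrm{Lip}^2u\cdot e_u$, and the Kato inequality $|\nabla du|^2\geq|\nabla|du||^2$ produces precisely $|\nabla\mathrm{Lip}\,u|^2$. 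Thus \eqref{equ1.6} is a ``worst-direction'' Bochner inequality, and the task is to reproduce this one-direction computation when the target is only $CAT(\kappa)$.

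I would begin with the regularity claim. Local boundedness of $\mathrm{Lip}\,u$ is contained in Theorem~\ref{thm1.3}. For $W^{1,2}_{\rm loc}$, I would apply Moser iteration to the already-known inequality \eqref{pre-bochner}, combined with the pointwise bound $e_u\leq(n+2)\mathrm{Lip}^2u$, to derive a Caccioppoli estimate of the form $\int\eta^2|\nabla\mathrm{Lip}\,u|^2\leq C\int(\mathrm{Lip}^2u)(|\nabla\eta|^2+\eta^2)$, which only requires that $\mathrm{Lip}^2u$ enjoy any distributional Bochner lower bound $\Delta\mathrm{Lip}^2u\geq -C(\mathrm{Lip}^2u+\mathrm{Lip}^4u)$.

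The analytic core is a one-direction Bochner inequality for the Korevaar--Schoen directional energy $|u_{*}V|^2(x):=\lim_{t\to 0^{+}}d_X^2\bigl(u(x),u(\gamma_V(t))\bigr)/t^2$. Fixing a unit $V$ and letting $\gamma_V^{x}$ denote the nearby $g$-geodesic with initial point $x$ and parallel-transported initial direction, I would form the two-variable function $\phi_t(x):=d_X^2\bigl(u(x),u(\gamma_V^{x}(t))\bigr)$, expand it to second order in $t$, and feed the $CAT(\kappa)$ quadrilateral (Reshetnyak) comparison into the distributional Laplacian computation that underpins Freidin--Zhang's argument. Taking $t\to 0$ should yield
\begin{equation*}
\tfrac12\Delta|u_{*}V|^2\ \geq\ \bigl|\nabla|u_{*}V|\bigr|^2-K\,|u_{*}V|^2-\kappa\,e_u\,|u_{*}V|^2
\end{equation*}
in the sense of distributions. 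The fact that only one ``leg'' of the quadrilateral has length of order $t|u_{*}V|$ while the opposite leg still carries the full contribution of order $t^2e_u$ is exactly what sharpens the coefficient from $\kappa e_u^2$ in \eqref{pre-bochner} to $\kappa e_u\cdot|u_{*}V|^2$ here.

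Finally, $V\mapsto|u_{*}V|^2(x)$ is a continuous quadratic form on $T_xM$ (Korevaar--Schoen), so $\mathrm{Lip}^2u(x)=\sup_{|V|=1}|u_{*}V|^2(x)$ is attained at some $V(x)$. Because the coefficients $K$ and $\kappa e_u$ in the one-direction inequality are independent of $V$, an envelope argument for distributional subsolutions of a common linear-plus-source operator transfers the inequality to the pointwise supremum, with $|\nabla|u_{*}V(x)||^2$ at the maximizing direction producing $|\nabla\mathrm{Lip}\,u|^2$ a.e. The main obstacle is the third step: extracting the Kato term $|\nabla|u_{*}V||^2$ uniformly in $V$ and in the approximation parameter $t$ from a target with no second-order calculus. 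In the smooth case this is one application of Cauchy--Schwarz after the Bochner identity; in the $CAT(\kappa)$ case it requires a sharp second-order Taylor expansion of the quadrilateral comparison along the geodesic variations of $\gamma_V^{x}$, which is the technical heart of the result and the place where the sharpening of \eqref{pre-bochner} is earned.
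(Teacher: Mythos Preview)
Your route is genuinely different from the paper's, and the step you flag as ``the technical heart''---extracting the Kato term $|\nabla|u_*V||^2$ from a $CAT(\kappa)$ quadrilateral comparison---is not merely technical but the essential obstruction. The Reshetnyak/Korevaar--Schoen comparison controls distances and hence yields the Freidin--Zhang inequality \eqref{pre-bochner} for $e_u$, but the Kato term encodes a second covariant derivative of $u$ along the target; your smooth heuristic invokes $|\nabla du|^2\geq|\nabla|du||^2$, which uses exactly the object that does not exist when $X$ is only $CAT(\kappa)$, and you propose no substitute mechanism. Two secondary gaps compound this. First, your $W^{1,2}_{\rm loc}$ argument is circular: \eqref{pre-bochner} is an inequality for $e_u$, not for $\mathrm{Lip}^2u$, so you cannot run Caccioppoli on $\mathrm{Lip}^2u$ until you already have a lower bound on $\Delta\mathrm{Lip}^2u$. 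Second, the envelope step---passing a \emph{distributional} inequality $\Delta|u_*V|\geq -(K+\kappa e_u)|u_*V|$ through a pointwise supremum over the continuum $V\in S^{n-1}$---is not automatic; viscosity reasoning would give it, but reconciling this with the distributional framework and simultaneously establishing $W^{1,2}_{\rm loc}$ for the supremum is additional work you have not addressed.

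The paper circumvents all three issues by an inf-convolution (Hamilton--Jacobi) device. For $p\in[2,\infty)$ and $q=p/(p-1)\in(1,2]$ it sets $f_t(x)=\inf_{y}\bigl\{|xy|^p/(pt^{p-1})-\phi(d_X(u(x),u(y)))\bigr\}$ with a carefully chosen convex $\phi$ and proves, via a maximum-principle/perturbation argument on the two-point function (Lemma~\ref{lem4.3}, Proposition~\ref{prop4.2}, using Lemma~\ref{lem2.3} and Lemma~\ref{lem3.3}), the \emph{linear} bound $\Delta f_t\leq Kt^{1-p}L_t^p+(1+\varepsilon)e_uD_t$ in the sense of distributions---no gradient term appears at this stage. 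Metric differentiability (Lemma~\ref{lem4.4}) gives $f_t/t\to -\mathrm{Lip}^qu/q$, $L_t^p/t^p\to \mathrm{Lip}^qu$, $D_t/t\to\mathrm{Lip}^qu$ a.e., yielding $\Delta(\mathrm{Lip}^qu/q)\geq -K\mathrm{Lip}^qu-\kappa e_u\mathrm{Lip}^qu$ for every $q\in(1,2]$; a uniform Caccioppoli bound in $q$ then allows $q\to1$, giving $\Delta\mathrm{Lip}\,u\geq -(K+\kappa e_u)\mathrm{Lip}\,u$, which by the chain rule is \eqref{equ1.6}. The Kato term thus never has to be produced from the target comparison---it emerges for free from the chain rule after the $q\to1$ limit---and the infimum over $y$ already selects the worst direction, so no envelope argument is needed.
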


\subsection{The outline of the proof of the Bochner inequality} $\ $

In the following, we would like to give a outline of the proof of Theorem \ref{bochner}. First, by the Chain rule, one easily checks that (\ref{equ1.6}) is equivalent to
\begin{equation}\label{equ1.7}
\Delta {\rm Lip}u  \gs   -K\cdot {\rm Lip}u- \kappa e_u\cdot  {\rm Lip}u
\end{equation}
in the sense of distributions. We will first to show that, for any $q\in(1,2]$
$$\Delta ({\rm Lip}^qu/q)  \gs  - K\cdot {\rm Lip}^qu-  \kappa e_u\cdot  {\rm Lip}^qu\leqno(1.7q)$$
in the sense of distributions, and then we check the limit as $q\to1$ to get (\ref{equ1.7}).

The proof of ($1.7q$) is inspired by the classical Hamilton-Jocabi flow. Recall  that the classical Hamilton-Jacobi equation, given a function $f$:
$$ \frac{\partial v(x,t)}{\partial t} = - |\nabla v(x,t)|^2$$
with $v(x,0) = f(x)$, has a solution (by Hopf-Lax formula)
$$\mathscr H_tf(x):=\inf_{y\in B_{R}}\Big\{\frac{d^2(x,y)}{2t}+f(y)\big)\Big\},\quad \ t>0.$$
The difference of ``time $t$" to the Hamilton-Jacobi flow $\mathscr H_tf(x)$ at $t=0$ gives the gradient $-|\nabla f(x)|^2$. That is, as $t \rightarrow 0^+$,
$$ \frac{\mathscr H_tf(x) - f(x)}{t} \rightarrow - |\nabla f(x)|^2.$$
This suggests to study the Hamilton-Jacobi flow $\mathscr H_tf(x)$ for the gradient estimates of $f$.

In order  to obtain ($1.7q$), we introduce a  family of functions $(f_t)_{t>0}$ by: on a fixed ball $B_R:=B_R(o)$ with $B_{2R}\subset\subset \Omega$, for any $q\in(1,2]$,
\begin{equation}\label{equ1.8}
f_t(x):=\inf_{y\in B_{2R}}\Big\{\frac{d^p(x,y)}{pt^{p-1}}-\phi\big(d_X\big(u(x),u(y)\big)\big) \Big\},\quad \forall\ x\in B_R,\quad \forall\ t>0,
\end{equation}
where $p=q/(q-1)$ and $\phi:[0,1/10]\to \mathbb R$ is a suitable smooth convex function with $\phi(0)=0$ and  $\phi'(0)=1$.

It is easy to check that, for any $x\in B_R$ and any sufficiently small $t$,
 the ``inf"  of \eqref{equ1.8} can be realized by some point $y_{t,x}\in B_{2R}$. The set of all such points is denoted by  $S_t(x)$. Then we put
\begin{equation}\label{equ1.9}
L_t(x):=\min_{y_{t,x}\in S_t(x)}d(x,y_{t,x}) \quad {\rm and}\quad D_t(x):=\frac{L_t^p(x)}{pt^{p-1}}-f_t(x).
\end{equation}
The proof of  ($1.7q$) contains two parts. Without loss of generality, we may assume $\kappa=1$. Firstly,  we shall show  that,  for any given $\varepsilon>0$, $f_t$ satisfies an elliptic inequality
\begin{equation}\label{equ1.10}
\Delta f_t(x) \ls \frac{K}{t^{p-1}}\cdot L^p_t(x)+ (1+\varepsilon)\cdot e_u(x) D_t(x),
\end{equation}
on $B_R$, for any sufficiently small $t>0$,  in the sense of distributions. Secondly, we want to show that, for almost all $x\in B_R$,
\begin{equation}\label{equ1.11}
\lim_{t\to0}\frac{f_t}{t}= -\frac{1}{q}{\rm Lip}^qu,\qquad  \lim_{t\to0^+}\frac{L_t}{t}={\rm Lip}^{q/p}u,\qquad  \lim_{t\to0^+}\frac{D_t}{t}={\rm Lip}^qu.
\end{equation}
The combination of (\ref{equ1.10}) and (\ref{equ1.11}) will yield the inequality ($1.7q$).

In order to prove (\ref{equ1.11}), we recall a generalized  Rademacher  theorem in \cite{kir94}. Let $f:\Omega\to X$ be a Lipschitz map, Kirchheim \cite{kir94} proved  for almost all $x\in \Omega$, that there exists
 a semi-norm,   denoted by $mdf_{x}$ and  called \emph{metric differential}, such that
$$d_X\big(f(\exp_{x}(t\xi)),f(x)\big)-t\cdot mdf_x(\xi)=o(t),$$
for all $\xi\in \mathbb S^{n-1}\subset T_{x}M$. By using this result, one can deduce a representative of point-wise Lipschitz constant of $f$: for almost all $x\in \Omega$,
\begin{equation*}
{\rm Lip}f(x):=\max_{\xi\in\mathbb S^{n-1}}mdf_x(\xi).
\end{equation*}
This suffices to show (\ref{equ1.11}). See Lemma \ref{lem2.9} and Lemma \ref{lem4.4} for the details.

Now, we explain the proof of (\ref{equ1.10}), which is inspired by the recent work \cite{zz17-lip} of the first and the third authors.
For simplicity, we assume $Ric_M\gs0$. We need to show that
$$\Delta  f_t(x) \ls  (1+\varepsilon)e_u(x) D_t(x)+\theta $$
  for sufficiently small $t>0$ and any $\theta>0$  in the sense of distributions. It is a local property, then we need only to consider  the case when $R$ is small.    We argue by contradictions. Suppose that it fails,   by the maximum principle, we have that there exists a domain $U$ and a positive number $\theta_0$  such that $f_t-v$ achieves a strict minimum in $U$, where $v$ is the solution of Dirichlet problem
\begin{equation*}
\Delta v=(1+\varepsilon)e_u(x) D_t(x)+\theta_0 \ \ {\rm in}\ \ U,\quad
v=f_{t} \ \ {\rm on}\ \ \partial U.
\end{equation*}
From the construction of $f_t$, we know that the function
 $$H(x,y):=\frac{d^p(x,y)}{pt^{p-1}}- F(x,y)-v(x).$$
 has a minimum in $U\times B_R$, where $F(x,y):= \phi\big(d_X\big(u(x),u(y)\big)\big)$. We denote one of these minimum points by   $(\bar x,\bar y)$.

Let $T: T_{\bar x }M\to T_{\bar y}M$ be the parallel transportation from $\bar x$ to $\bar y$. We want to  consider the asymptotic behavior of the average
$$\fint_{B_\epsilon(0)}H\big(\exp_{\bar x}(\eta),\exp_{\bar y}(T\eta)\big){\rm d}\eta$$
as $\epsilon\to 0$. According to $Ric_{M}\gs 0$,   by integrating  the second variation of arc-length  over $B_\epsilon(0)$, we have that
\begin{equation}\label{equ1.12}
\fint_{B_\epsilon(0)}\Big(d^p\big(\exp_{\bar x}( \eta), \exp_{\bar y}( T\eta)\big)-d^p(\bar x,\bar y)\Big){\rm d}\eta\ls o(\epsilon^2).
\end{equation}
Notice that $\Delta v=(1+\varepsilon)e_u(x) D_t(x)+\theta_0$ implies that $v$ is smooth near $\bar x$, it follows that
\begin{equation}\label{equ1.13}
-\fint_{B_\epsilon(0)}\Big(v\big(\exp_{\bar x}(\eta)\big)-v(\bar x)\Big){\rm d}\eta\ls -\frac{1}{2(n+2)}\Big[(1+\varepsilon)e_u(\bar x) D_t(\bar x)+\theta_0\Big]\cdot\epsilon^2+o(\epsilon^2).
\end{equation}
Thus, we only need to  show an asymptotic mean value inequality  that
\begin{equation}\label{equ1.14}
 -\fint_{B_\epsilon(0)}\Big(F\big(\exp_{\bar x}( \eta), \exp_{\bar y}( T\eta)\big)-F(\bar x,\bar y)\Big){\rm d}\eta\ls  \frac{1+\varepsilon}{2(n+2)} e_u(\bar x) D_t(\bar x) \cdot\epsilon^2+o(\epsilon^2).
\end{equation}
Indeed, once one has proved (\ref{equ1.14}), the combination of (\ref{equ1.12})-(\ref{equ1.14}) contradicts with the fact that $H(x,y)$ has  a minimum at $(\bar x,\bar y)$, and hence it follows (\ref{equ1.10}).

In order to show (\ref{equ1.14}), we need to choose a suitable function $\phi(s)$ in  (\ref{equ1.8}). In the simplest case that  $\kappa=0$ and $p=q=2$, we can choose directly  $\phi(s)=s$, as  in \cite{zz17-lip}.

 In the case of $\kappa=1$ (and general $q\in(1,2]$),  the definition of $CAT(1)$ suggests us to choose $\phi(s)= 2\sin(s/2)$. However, this is not convex for small $s>0$.
An exact relation in $CAT(1)$-spaces, Lemma \ref{lem2.3}, suggests us to perturb  $2\sin(s/2)$ to
  $$\phi(s)=2\sin(s/2)+4\sin^2(s/2).$$
 Fortunately, this is convex  for small $s>0$.

  Given any $a,b\in\mathbb R$ with $a,b\gs0$, and fixed any $q\in \Omega$, $Q\in X$, we define a function near $q$ by
  \begin{equation*}
 w_{a,b,Q,q}(x):=a\cdot d^2_X\big(u(x),u(q)\big)+b\cdot \cos\big(d_X(u(x),Q)\big).
 \end{equation*}
Since $(X,d_X)$ has curvature $\ls1$, by combining that $e^u_\epsilon$ converge to $e_u$ as $\epsilon\to0$ and the fact
$$\Delta\cos\big(d_X(u(x),Q)\big)\ls -\cos\big(d_X(u(x),Q)\big)\cdot e_u(x),$$
we will be able to  deduce that,  for almost all $q$,   an asymptotic mean value inequality for $w_{a,b,Q,q}$ holds (for some subsequence $\epsilon_j\to0$, see Lemma \ref{lem3.3} for the explicit statements).

 On the other hand, the assumption  $(X,d_X)$ having curvature $\ls1$ implies also that, for any $q_1,q_2$,
 the function   $w_{a_2,b,Q_m,q_1}+ w_{a_1,b,Q_m,q_2}$ touches $-F(\cdot,\cdot)$ by above at $(q_1,q_2)$ for some suitable constants $a_1,a_2,b\gs0$, where $Q_m$ is the mid-point    of $u(q_1)$ and $u(q_2)$  (the details is given in Lemma \ref{lem2.3}). Therefore, we conclude that an asymptotic mean value inequality for $-F(\cdot,\cdot)$ at almost all $(q_1,q_2)$ holds (see Eq.(\ref{equ4.12}) and Lemma \ref{lem3.3} for the explicit formulas).
 First, let us assume briefly that the mentioned asymptotic mean value inequality for $-F(\cdot,\cdot)$ at $(\bar x,\bar y)$. Then we conclude (\ref{equ1.14}) in this case. The primary issue is that there is no reason we can assume that the  asymptotic mean value inequality for $-F(\cdot,\cdot)$ at $(\bar x,\bar y)$. In this case, we will perturb the function $H(x,y)$ to $H_1(x,y):=H(x,y)+\gamma_\delta(x,y)$ by a smooth function $\gamma_\delta(x,y)$, which is arbitrarily small up to two order derivatives, such that the mentioned asymptotic mean value inequality for $-F(\cdot,\cdot)$ holds at one of minimum of  $H_1(x,y)$.   This argument of perturbation can be ensured by  a generalized Jensen's Lemma in the theory of viscosity solutions of second order partial differential equations.

\noindent\textbf{Acknowledgements.} The first and third authors are partially supported by NSFC 11521101, The first author is also partially supported by NSFC 11571374 and by ``National Program for support of Top-notch Young Professionals". The second author is supported by the Academy of Finland. Part of the work was done when the first author visited the Department of Mathematics and Statistics, University of Jyv\"{a}skyl\"{a} for one month in 2016. He would like to thank the department for the hospitality.

\section{Preliminaries}

\subsection{Energy and Sobolev spaces of maps into metric spaces} $\ $

Let $\Omega$ be a bounded open domain of an $n$-dimensional smooth Riemannian manifold $(M,g)$, and let $(X,d_X)$ be a complete metric space. We will  write
$$|xy|:=d(x,y),\quad \forall \ x,y\in M.$$

 Several equivalent notions of Sobolev space  for maps into metric spaces have introduced in \cite{korevaar-schoen93,jost97,heinonen-et01,kuwae-shioya03,ohta04}.
Fix any $p\in[1,\infty)$.
A Borel measurable map $u:\ \Omega\to X$ is said to be in the space $L^p(\Omega,X)$ if it has separable range and, for some (hence, for all) $P\in X$,
$$\int_\Omega d^p_X\big(u(x),P\big)\dv(x)<\infty.$$
We equip with a distance in $L^p(\Omega,X)$ by
$$d^p_{L^p}(u,v):=\int_\Omega d^p_X\big(u(x),v(x)\big)\dv(x), \qquad \forall\ u,v\in L^p(\Omega,X).$$

Denote by $C_0(\Omega)$ the set of continuous functions compactly supported on $\Omega$. Given $ p\in [1,\infty)$ and a map $u\in L^p(\Omega,X),$
 for each $\epsilon>0$, the \emph{approximating energy} $E^u_{p,\epsilon}$ is defined as a functional on $C_0(\Omega)$:
$$E^u_{p,\epsilon}(\phi):=\int_\Omega\phi(x) e^u_{p,\epsilon}(x)\dv(x),\qquad \forall\ \phi\in C_0(\Omega),$$
where the \emph{approximating energy density} is defined by
$$e^u_{p,\epsilon}(x)=e^u_{p,\epsilon,g}(x):=\frac{n+p}{c_{n,p}\cdot\epsilon^n}\int_{B_\epsilon(x)\cap\Omega}\frac{d^p_X\big(u(x),u(y)\big)}{\epsilon^p}{\rm d}v_g(y),$$
and the constant $c_{n,p}=\int_{\mathbb S^{n-1}}|x^1|^p\sigma(dx),$ and $\sigma$ is the
canonical Riemannian volume on $\mathbb S^{n-1}$. In particular, $c_{n,2}=\omega_{n-1}/n$,
 where $\omega_{n-1}$ is the volume of $(n-1)$-sphere $\mathbb S^{n-1}$ with standard metric.
Next, a map $u\in L^p(\Omega,X)$ is said to be in $W^{1,p}(\Omega,X)$ if the energy $E^u_p<\infty$, where
$$E^u_p:=\sup_{\phi\in C_0(\Omega),\ 0\ls \phi\ls1}\Big(\limsup_{\epsilon\to0}E^u_{p,\epsilon}(\phi)\Big).$$

If $1<p<\infty$ and  $u\in W^{1,p}(\Omega,X)$, it was proved in \cite{korevaar-schoen93} that, for each $\phi\in C_0(\Omega)$, the limit
$$E^u_{p}(\phi):=\lim_{\epsilon\to0^+}E^u_{p,\epsilon}(\phi)$$
exists (called \emph{p-th} energy functional of $u$), and that $E^u_p$ is absolutely continuous with respect to the Riemannian volume ${\rm vol}$. Denote the density by $e_{u,p}\in L^1_{\rm loc}(\Omega)$. Moreover, from \cite[Lemma 1.4.2]{korevaar-schoen93}, there exists a constant $C>0$, independent of $\epsilon$ such that
$$E^u_{p,\epsilon}(\phi)\ls  E^u_p\big(C\epsilon\phi+\max_{y\in B_\epsilon(x)}|\phi(y)-\phi(x)|\big)$$
 for any sufficiently small $\epsilon>0$. Thus, by Dunfold-Pettis Theorem, it implies that
 $$e^u_{p,\epsilon}  \to e_{u,p}\ \ {\rm in}\ \  L^1_{\rm loc}(\Omega),\quad {\rm as}\ \epsilon\to0.$$

For the special   case $p=2$, we   write $e_u:=e_{u,2} $ and $E^u:=E^u_2$ for any $u\in W^{1,2}(\Omega,X).$
We summarize some  main properties of $W^{1,2}(\Omega,X)$, which  can be found in   \cite{korevaar-schoen93,kuwae-shioya03}.
\begin{prop}\label{prop2.1}
Let   $u\in W^{1,2}(\Omega, X)$.

\noindent $(1)$ {\rm(Lower semi-continuity)}\indent For any sequence $u_j\to u$ in $L^2(\Omega, X)$ as $j\to\infty$, we have
$$E^u(\phi)\ls \liminf_{j\to\infty} E^{u_j}(\phi),\qquad \forall\ 0\ls \phi\in C_0(\Omega).$$

\noindent $(2)$ {\rm(Equivalence for $X=\mathbb R$)}\indent If $X=\mathbb R$,  the above  space $W^{1,2}(\Omega,\mathbb R)$
is equivalent to the usual Sobolev space $W^{1,2}(\Omega)$.

\noindent $(3)$ {\rm(Weak Poincar\'e inequality, see for example \cite[Theorem 4.2]{kuwae-shioya03})}\indent For any ball $B_R(q)$ with $B_{6R}(q)\subset\subset \Omega$, there exists a constant $C_{n,K,R}>0$ such that the following holds: for any $z\in B_R(q)$ and any $r\in(0,R/2)$, we have
\begin{equation}\label{equ2.1}
\int_{B_r(z)}\int_{B_r(z)}d^2_X\big(u(x),u(y)\big)\dv(x)\dv(y) \ls C_{n,K,R}\cdot r^{n+2}\cdot\int_{B_{6r}(z)}e_u(x)\dv(x).
\end{equation}
\end{prop}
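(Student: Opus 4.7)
For part (1), I would first show that at fixed $\epsilon>0$ and fixed $\phi\in C_0(\Omega)$ with $0\leq\phi\leq 1$, the functional $u\mapsto E^u_\epsilon(\phi)$ is continuous on $L^2(\Omega,X)$. Indeed, combining the elementary bound $|a^2-b^2|\leq(a+b)|a-b|$ with the triangle inequality
$$|d_X(u_j(x),u_j(y))-d_X(u(x),u(y))|\leq d_X(u_j(x),u(x))+d_X(u_j(y),u(y))$$
and Cauchy-Schwarz in $y\in B_\epsilon(x)$ gives
$$|E^u_\epsilon(\phi)-E^{u_j}_\epsilon(\phi)|\leq C(n,\phi,\epsilon)\,\|d_X(u_j,u)\|_{L^2}\,\bigl(E^u_\epsilon(\phi)+E^{u_j}_\epsilon(\phi)\bigr)^{1/2}.$$
Once $E^{u_j}_\epsilon(\phi)\to E^u_\epsilon(\phi)$ is established, I would combine it with the Korevaar-Schoen inequality $E^{u_j}_{2,\epsilon}(\phi)\leq E^{u_j}\bigl(C\epsilon\phi+m(\phi,\epsilon)\bigr)$ recalled in the excerpt, where $m(\phi,\epsilon)(x):=\max_{y\in B_\epsilon(x)}|\phi(y)-\phi(x)|$, and let $j\to\infty$ first and then $\epsilon\to 0^+$. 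Since $m(\phi,\epsilon)\to 0$ uniformly and $E^{u_j}$ is a Radon measure, this yields $E^u(\phi)\leq\liminf_{j\to\infty} E^{u_j}(\phi)$.

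For part (2), it suffices to evaluate $e^f_{2,\epsilon}(x)$ on smooth $f\in C^2(\Omega)$. Writing $f(\exp_x\xi)-f(x)=\nabla f(x)\cdot\xi+O(|\xi|^2)$ in Riemannian normal coordinates and using the spherical-symmetry identity $\int_{B_\epsilon(0)\subset T_xM}(v\cdot\xi)^2\,d\xi=\frac{\omega_{n-1}\epsilon^{n+2}}{n(n+2)}|v|^2$ gives $\lim_{\epsilon\to 0^+}e^f_{2,\epsilon}(x)=|\nabla f(x)|^2$ pointwise on $\Omega$. Hence the Korevaar-Schoen density of $f$ agrees with $|\nabla f|^2$. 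A density argument using the $L^1_{\rm loc}$-convergence $e^u_{2,\epsilon}\to e_u$ together with the lower semi-continuity from part (1) then identifies $W^{1,2}(\Omega,\mathbb R)$ with the classical Sobolev space.

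For part (3), I would follow the segment-chaining strategy of Kuwae-Shioya. Fix $z\in B_R(q)$, $r\in(0,R/2)$, and small $\delta>0$. For a.e. pair $(x,y)\in B_r(z)\times B_r(z)$, choose a nearly minimizing geodesic $\gamma_{xy}:[0,1]\to B_{2r}(z)$ joining them and partition it into $N\sim|xy|/\delta$ sub-arcs $[t_{i-1},t_i]$ of length $\delta$. The triangle inequality together with Cauchy-Schwarz yields
$$d_X^2(u(x),u(y))\leq N\sum_{i=1}^N d_X^2\bigl(u(\gamma(t_{i-1})),u(\gamma(t_i))\bigr).$$
Integrating over $(x,y)$, interchanging orders of integration, and applying Bishop-Gromov volume comparison (which accounts for the $K$-dependence) to bound the measure of $(x,y)$-pairs whose geodesic passes through a fixed small ball inside $B_{6r}(z)$, the left-hand side is dominated by a constant multiple of $r^{n+2}\int_{B_{6r}(z)}e^u_{2,\delta}(w)\,dv(w)$. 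Passing to the limit $\delta\to 0^+$ produces \eqref{equ2.1}.

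The main obstacle is the interchange of limits in $j$ and $\epsilon$ in part (1): continuity at fixed $\epsilon$ is straightforward, but promoting it to lower semi-continuity at $\epsilon=0^+$ genuinely requires the uniform upper bound of Lemma 1.4.2 of Korevaar-Schoen recalled in the excerpt, since a priori no uniform control on the approximating energies is available. A secondary difficulty is tracking the $K$-dependence in the chaining step of part (3); the cleanest packaging is via the Bishop-Gromov doubling bound, which controls the occupation measure of random geodesics in $B_{6r}(z)$.
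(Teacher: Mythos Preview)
The paper does not supply its own proof of Proposition~2.1; it merely records these three facts with citations to Korevaar--Schoen and Kuwae--Shioya. So there is no in-paper argument to compare against, and your sketches are essentially the arguments found in those references.

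A couple of remarks on the sketches themselves. Your outline for part~(2) is the standard computation and is fine. The chaining strategy for part~(3) is indeed the Kuwae--Shioya approach; the one step that needs more care than you indicate is that the intermediate points $\gamma(t_i)$ lie on a measure-zero set of geodesics, so one does not evaluate $u$ there directly but rather averages over small balls around them before passing to $e^u_{2,\delta}$ and then to $e_u$.

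For part~(1), your plan---continuity of $u\mapsto E^u_\epsilon(\phi)$ at fixed $\epsilon$, followed by a uniform-in-$\epsilon$ comparison with the limiting energy---is the right architecture, but the inequality you quote from the excerpt is not usable as written: taken literally, $E^{u_j}_{2,\epsilon}(\phi)\le E^{u_j}\bigl(C\epsilon\phi+m(\phi,\epsilon)\bigr)$ has a right-hand side that tends to $0$ as $\epsilon\to0$, which would force $E^u(\phi)=0$. The displayed line in the paper is evidently a misprint; the relevant Korevaar--Schoen estimate (their Lemma~1.4.2 and the subpartition lemma) compares $E^u_{2,\epsilon}(\phi)$ with $E^u_{2,\epsilon'}$ at a smaller scale $\epsilon'$, or equivalently gives $E^u_{2,\epsilon}(\phi)\le E^u(\phi)+E^u\bigl(C\epsilon\phi+m(\phi,\epsilon)\bigr)$. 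With the corrected inequality your limit-interchange argument goes through exactly as in Korevaar--Schoen's Theorem~1.6.1.
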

\noindent\emph{Remark 2.1.}\quad By a rescaling argument, one can easily improve the constant $C_{n,K,R}$ in (\ref{equ2.1}) to a constant $C_{n, KR^2}$ depending only on $n$ and $ KR^2$. Indeed, let us consider the rescaling the metric on $M$ by $g_R:=R^{-2}g$. Then we have $Ric_{g_R}\gs -KR^2$ and $dv_{g_R}=R^{-n}dv_g$. By the definition of $e^u_{p,\epsilon,g}$, we get $e^u_{p,R^{-1}\epsilon,g_R}=R^p\cdot e^u_{p,\epsilon,g}$. Therefore, by the definition  of   $e_{u,p}$, the Poincar\'e constant in (\ref{equ2.1}) is invariant with respect to the rescaling $g\mapsto g_R$.


\subsection{$CAT(\kappa)$-spaces} $\  $

Let us review firstly the concept of spaces with curvature bounded above  (globally)  in the sense of Alexandrov.
\begin{defn}[see, for example, \cite{bbi-book,eells-fug01-book}]\label{def2.2}
A geodesic space $(X,d_X)$ is called to be globally \emph{curvature bounded above by $\kappa$} in the sense of Alexandrov, for some $\kappa\in\mathbb R$, denoted by $CAT(\kappa)$, if the
 following comparison property is to hold: Given any triangle $\triangle PQR\subset X$ such that $d_X(P,Q)+d_X(Q,R)+d_X(R,P)<2\pi/\sqrt\kappa$ if $\kappa>0$ and point $S\in QR$ with
 $$d_X(Q,S)=d_X(R,S)= \frac{1}{2} d_X(Q,R),$$
then there exists a comparison triangle $\triangle \bar P\bar Q\bar R$ in the simply connected 2-dimensional space form $\mathbb S^2_\kappa $ with standard metric with sectional curvature $=\kappa$ and point $\bar S\in \bar Q\bar R$ with
 $$d_{\mathbb S^2_\kappa}(\bar Q,\bar S)=d_{\mathbb S^2_\kappa}(\bar R,\bar S)=\frac{1}{2}d_{\mathbb S^2_\kappa}(\bar Q,\bar R)$$
such that
$$d_X(P,S)\ls d_{\mathbb S_\kappa^2}(\bar P,\bar S).$$
\end{defn}
It is obvious that $(X,d_X)$ is a $CAT(\kappa)$-space if and only if the rescaled space $(X,\sqrt\kappa\cdot d_X)$ is a $CAT(1)$-space, for any $\kappa>0$.

We need a lemma, which follows from \cite[Corollary 2.1.3]{korevaar-schoen93}:
\begin{lem}\label{lem2.3}
Let $(X,d_X)$ be an $CAT(1)$ space. Take any ordered sequence $\{P,Q,R,S\}\subset X$, and let point $Q_{m}$ be the mid-point of $QR$.
we denote the distance $d_X(A,B)$ abbreviatedly by $d_{AB}.$
Then, for any $0\ls \alpha\ls 1$ and $\beta>0$, we have
\begin{equation}\label{equ2.2}
\begin{split}
 \frac{1-\alpha}{2}&\Big((2\sin \frac{d_{QR}}{2})^2-(2\sin \frac{d_{PS}}{2})^2\Big)+\alpha(2\sin \frac{d_{QR}}{2})\Big(2\sin \frac{d_{QR}}{2}-2\sin \frac{d_{PS}}{2}\Big) \\
\ls &\Big[1-\frac{1-\alpha}{2}(1-\frac{1}{\beta}) \Big](2\sin\frac{d_{PQ}}{2})^2+2\cos \frac{d_{QR}}{2}\Big(\cos d_{PQ_m}-\cos d_{QQ_m}\Big)\\
 &\  +\Big[1-\frac{1-\alpha}{2}(1- \beta ) \Big](2\sin\frac{d_{RS}}{2})^2+2\cos \frac{d_{QR}}{2}\Big(\cos d_{SQ_m}-\cos d_{RQ_m}\Big).
 \end{split}
 \end{equation}
\end{lem}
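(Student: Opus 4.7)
The plan is to apply the $CAT(1)$ quadrilateral comparison in the form of Korevaar--Schoen~\cite[Cor.~2.1.3]{korevaar-schoen93}, combined with a bookkeeping that introduces the parameters $\alpha$ and $\beta$ via a convex interpolation and a weighted AM--GM step, respectively. The central ingredient is the $CAT(1)$ midpoint inequality: for any three points $A,B,C$ and midpoint $M$ of $BC$,
\[
2\cos(d_{BC}/2)\cdot\cos d_{AM}\gs\cos d_{AB}+\cos d_{AC},
\]
which on the model sphere $\mathbb S^2_1$ is an equality.

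First, I would apply this midpoint inequality to the triangles $PQR$ and $SQR$ (both having $Q_m$ as midpoint of the common side $QR$), then use $d_{QQ_m}=d_{RQ_m}=d_{QR}/2$ together with the identity $1-\cos\theta=\tfrac12(2\sin(\theta/2))^2$ to rewrite the resulting bounds in the form
\[
2\cos(d_{QR}/2)(\cos d_{PQ_m}-\cos d_{QQ_m})\gs -(2\sin(d_{PQ}/2))^2+(\cos d_{PR}-\cos d_{QR}),
\]
together with the analogue having $(P,Q)$ replaced by $(S,R)$. Next I would bring $d_{PS}$ in through $Q_m$: the triangle inequality $d_{PS}\ls d_{PQ_m}+d_{SQ_m}$ and monotonicity of $\cos$ on $[0,\pi]$ give $\cos d_{PS}\gs\cos d_{PQ_m}\cos d_{SQ_m}-\sin d_{PQ_m}\sin d_{SQ_m}$. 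Converted to $(2\sin(d_{PS}/2))^2=2(1-\cos d_{PS})$ and combined with the weighted AM--GM $\sin d_{PQ_m}\sin d_{SQ_m}\ls\tfrac12(\beta^{-1}\sin^2 d_{PQ_m}+\beta\sin^2 d_{SQ_m})$, the cross term is distributed between the $PQ$- and $RS$-sides with weights $\beta^{-1}$ and $\beta$, producing the coefficients $[1-\tfrac{1-\alpha}{2}(1-\beta^{-1})]$ and $[1-\tfrac{1-\alpha}{2}(1-\beta)]$ once fed back through the Step~1 bounds.

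The role of $\alpha$ is an interpolation: the left-hand side of (\ref{equ2.2}) factors as $(u-v)\bigl[\tfrac{1+\alpha}{2}u+\tfrac{1-\alpha}{2}v\bigr]$ with $u=2\sin(d_{QR}/2)$ and $v=2\sin(d_{PS}/2)$, which is linear in $\alpha$; since the right-hand side is also linear in $\alpha$, it suffices to verify the two endpoints $\alpha=0$ (the ``difference-of-squares'' form $\tfrac12(u^2-v^2)=\tfrac12(u-v)(u+v)$) and $\alpha=1$ (the ``mixed'' form $u(u-v)$), both reachable from the preceding steps, and then to take the convex combination. The \emph{main obstacle} is algebraic bookkeeping: the midpoint and triangle inequalities, the cosine-sum identity, and the AM--GM step have to combine so that the prefactor $2\cos(d_{QR}/2)$ on $\cos d_{PQ_m}-\cos d_{QQ_m}$ and $\cos d_{SQ_m}-\cos d_{RQ_m}$ is preserved exactly, with no spurious residue, and so that the same $\beta$ serves both the $\alpha=0$ and $\alpha=1$ endpoints. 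A secondary technical concern is that the midpoint and triangle-inequality steps implicitly require the relevant perimeters to stay below $2\pi$, which in the intended application of the lemma is ensured by the hypothesis that all of $P,Q,R,S$ lie in a small ball of $X$.
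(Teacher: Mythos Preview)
Your outline has a genuine gap. You cite Korevaar--Schoen's Corollary~2.1.3 in the first sentence, but that result is a \emph{quadrilateral} comparison in $CAT(0)$, not in $CAT(1)$; what you then actually propose to use is only the triangle midpoint inequality in $X$ plus the metric triangle inequality $d_{PS}\ls d_{PQ_m}+d_{SQ_m}$. This is too weak. The midpoint inequality applied to $\triangle PQR$ produces a term $\cos d_{PR}-\cos d_{QR}$ (and symmetrically $\cos d_{SQ}-\cos d_{QR}$ from $\triangle SQR$), so the diagonals $d_{PR},d_{SQ}$ enter your bounds and you give no mechanism to eliminate them; they do not appear in~\eqref{equ2.2}. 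The subsequent step---bounding $\cos d_{PS}$ by $\cos d_{PQ_m}\cos d_{SQ_m}-\sin d_{PQ_m}\sin d_{SQ_m}$---is just the triangle inequality plus cosine addition, uses no curvature at all, and leaves you with \emph{quadratic} expressions in $\cos d_{PQ_m},\sin d_{PQ_m}$, whereas the right side of~\eqref{equ2.2} is \emph{linear} in $\cos d_{PQ_m}$ with the exact prefactor $2\cos(d_{QR}/2)$. Your reduction-to-endpoints in $\alpha$ is fine (both sides are indeed affine in $\alpha$), but at each endpoint the computation you describe does not close.

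What the paper does, and what you are missing, is the Euclidean cone construction: embed $X\hookrightarrow C(X)$ at radius~$1$; since $X$ is $CAT(1)$, the cone $C(X)$ is $CAT(0)$, so Korevaar--Schoen's $CAT(0)$ quadrilateral inequality (their Cor.~2.1.3 with $t=\tfrac12$) applies to $\bar P,\bar Q,\bar R,\bar S$. The cone chord identity $|\bar A\bar B|_C=2\sin(d_{AB}/2)$ produces all the $2\sin(\cdot/2)$ terms directly, and the midpoint $\bar T$ of $\bar Q\bar R$ in the cone sits at $(Q_m,\cos(d_{QR}/2))$, so $|\bar T\bar Q|_C^2-|\bar T\bar P|_C^2=2\cos(d_{QR}/2)(\cos d_{PQ_m}-\cos d_{QQ_m})$ exactly. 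The parameter $\alpha$ is already present in the Korevaar--Schoen inequality, and $\beta$ enters through a single AM--GM step $2|\bar R\bar S|_C|\bar P\bar Q|_C\ls\beta|\bar R\bar S|_C^2+\beta^{-1}|\bar P\bar Q|_C^2$. No diagonals $d_{PR},d_{SQ}$ ever appear. To fix your argument you need either this cone lift or an equivalent spherical-law-of-cosines quadrilateral identity; the triangle midpoint inequality alone will not do it.
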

\begin{proof}
Consider the embedding $X$ into the cone $C(X)$ with the cone metric $|\cdot\ \cdot\ |_C$. Then $C(X)$ has non-positive curvature in the sense of Alexandrov. Denote that
$$ \bar P=(P,1),\quad \bar Q=(Q,1),\quad  \bar S=(S,1),\quad \bar R=(R,1)  \quad {\rm and}\quad  \bar Q_m=(Q_m,1). $$
It is clear that the midpoint of $\bar Q,\bar R$ in $C(X)$ is
$$ \bar T=(Q_m,\cos\frac{d_{QR}}{2}).$$
From the equation (2.1v) in \cite[Corollary 2.1.3]{korevaar-schoen93} (by taking $t=1/2$ there), we get, for each $\alpha\in[0,1]$, that
\begin{equation*}
\begin{split}
|\bar T\bar P|^2_C+|\bar T\bar S|^2_C\ls & |\bar P\bar Q|^2_C+|\bar R\bar S|^2_C+\frac 1 2\big(|\bar S\bar P|_C^2-|\bar Q\bar R|_C^2\big)+\frac 1 2|\bar Q\bar R|^2_C\\
& -\frac 1 2\Big(\alpha\big(|\bar S\bar P|_C-|\bar Q\bar R|_C\big)^2+(1-\alpha)\big(|\bar R\bar S|_C-|\bar P\bar Q|_C\big)^2\Big).
\end{split}
\end{equation*}
Notice that $|\bar Q\bar R|_C=2|\bar T\bar Q|_C=2|\bar T\bar R|_C$ and that
$$\big(|\bar S\bar P|^2_C-|\bar Q\bar R|_C^2\big)-\alpha \big(|\bar S\bar P|_C-|\bar Q\bar R|_C\big)^2=(1-\alpha)\big(|\bar S\bar P|^2_C-|\bar Q\bar R|_C^2\big)+2\alpha|\bar Q\bar R|_C\big(|\bar S\bar P|_C-|\bar Q\bar R|_C\big) .$$
Therefore, we obtain
\begin{equation}\label{equ2.3}
\begin{split}
\frac{1-\alpha}{2}\big(|\bar Q\bar R|_C^2&-|\bar S\bar P|^2_C\big)+\alpha|\bar Q\bar R|_C\big(|\bar Q\bar R|_C-|\bar S\bar P|_C\big)\\
\ls& |\bar P\bar Q|^2_C+|\bar T\bar Q|^2_C-|\bar T\bar P|^2_C+ |\bar S\bar R|^2_C+|\bar T\bar R|^2_C-|\bar T\bar S|^2_C\\
&\quad-\frac{1-\alpha}{2}\big(|\bar R\bar S|_C-|\bar P\bar Q|_C\big)^2\\
\ls& |\bar P\bar Q|^2_C+|\bar T\bar Q|^2_C-|\bar T\bar P|^2_C+ |\bar S\bar R|^2_C+|\bar T\bar R|^2_C-|\bar T\bar S|^2_C\\
&\quad  -\frac{1-\alpha}{2}\big(|\bar R\bar S|^2_C+|\bar P\bar Q|^2_C-\beta |\bar R\bar S|^2_C-\frac 1 \beta |\bar P\bar Q|^2_C\big)\\
=&  |\bar P\bar Q|^2_C\Big(1-\frac{1-\alpha}{2}\big(1-\frac 1 \beta\big)\Big)+|\bar T\bar Q|^2_C-|\bar T\bar P|^2_C\\
&\quad + |\bar S\bar R|^2_C\Big(1-\frac{1-\alpha}{2}\big(1- \beta\big)\Big)+|\bar T\bar R|^2_C-|\bar T\bar S|^2_C
\end{split}
\end{equation}
for any $\beta>0$, where we have used $2  |\bar R\bar S|_C\cdot |\bar P\bar Q|_C\ls \beta |\bar R\bar S|^2_C+\frac 1 \beta |\bar P\bar Q|^2_C$. By recalling
 the definition of the cone metric $|\cdot\cdot\ |_C$, we have
\begin{equation*}
\begin{split}
&|\bar Q\bar R|_C=2\sin\frac{d_{QR}}{2}, \qquad |\bar S\bar P|_C=2\sin\frac{d_{SP}}{2}, \\
&|\bar P\bar Q|_C=2\sin\frac{d_{PQ}}{2}, \qquad |\bar R\bar S|_C=2\sin\frac{d_{RS}}{2},\\
&  |\bar T\bar Q|_C=|\bar T\bar R|_C=\frac{|\bar Q\bar R|_C}{2}=\sin\frac{d_{QR}}{2}
\end{split}
\end{equation*}
and (by noticing that $|O\bar T|_C=\cos\frac{d_{QR}}{2}$,)
\begin{equation*}
\begin{split}
&|\bar T\bar P|_C^2=1+\cos^2\frac{d_{QR}}{2}-2\cos\frac{d_{QR}}{2}\cos  d_{PQ_m}, \\
&|\bar T\bar S|_C^2=1+\cos^2\frac{d_{QR}}{2}-2\cos\frac{d_{QR}}{2}\cos  d_{SQ_m}.
\end{split}
\end{equation*}
Then
\begin{equation*}
\begin{split} |\bar T\bar Q|^2_C-|\bar T\bar P|^2_C&=\sin^2\frac{d_{QR}}{2}-1-\cos^2\frac{d_{QR}}{2}+2\cos\frac{d_{QR}}{2}\cos  d_{PQ_m}\\
&=2\cos\frac{d_{QR}}{2}\Big(\cos d_{PQ_m}- \cos \frac{d_{QR}}{2}\Big)\\
&=2\cos\frac{d_{QR}}{2}\Big(\cos d_{PQ_m}- \cos d_{QQ_m}\Big),
\end{split}
\end{equation*}
where we have used $d_{QQ_m}=\frac{d_{QR}}{2}.$ Similarly, we have
\begin{equation*}
 |\bar T\bar R|^2_C-|\bar T\bar S|^2_C =2\cos\frac{d_{QR}}{2}\Big(\cos d_{SQ_m}- \cos d_{RQ_m}\Big).\qquad
\end{equation*}
Therefore, the combination of these and the estimate (\ref{equ2.3}) implies the desired (\ref{equ2.2}). The proof is completed.
\end{proof}

\subsection{Harmonic maps} $\  $

In the following, we always assume that $\Omega$ is a bounded domain in an $n$-dimensional smooth Riemannian manifold $(M,g)$ with $Ric_M\gs -K$ for some $K\gs0$  and that $(X,d_X)$ is a  $CAT(\kappa)$ space for some $\kappa\gs0$.

Given any $\phi\in W^{1,2}(\Omega,X)$, we set
$$W^{1,2}_\phi(\Omega,X):=\big\{u\in W^{1,2}(\Omega,X):\ d_X\big(u(x),\phi(x)\big)\in W^{1,2}_0(\Omega)\big\}.$$
Using the variation method, it was proved in \cite{jost97,lin97} that there exists a unique $u\in W^{1,2}_\phi(\Omega,X)$ which is a minimizer of energy
$E_2^u$ in $W^{1,2}_\phi(\Omega,X)$. That is, the energy $E^u:=E_2^u=E^u_2(\Omega)$ of $u$ satisfies
$$E^u=\inf_w\big\{E^w:\ w\in W^{1,2}_\phi(\Omega,X)\big\}.$$
Such an energy minimizing map is called a \emph{harmonic map}.

 The basic existence and regularity were given by Korevaar-Schoen in \cite{korevaar-schoen93} for $\kappa\ls 0$ and by Serbinowski in \cite{serbinowski95} for $\kappa>0$. We state their regularity result in the case $\kappa>0$ (see also \cite[Theorem 2.3]{mese02}):
\begin{thm}[\cite{korevaar-schoen93,serbinowski95}]\label{thm2.4}
Let $u$ be a harmonic map  from $\Omega$ to $X$. Assume that its image $u(\Omega)$ is contained in a ball $B_\rho \subset X$ with radius $\rho<\frac{\pi}{2\sqrt\kappa}$ if $\kappa>0$. Then $u$ is locally Lipschitz continuous in the interior of $\Omega$. (Note that  the local Lipschitz constant of $u$ near a point  $x\in \Omega$ depends on the   $C^1$-norm of metric $g$ near $x$.)
\end{thm}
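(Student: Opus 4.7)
The plan is to establish local Lipschitz regularity in three stages: a weak distributional Bochner-type inequality for the energy density $e_u$, Moser iteration to bound $e_u$ in $L^\infty_{\mathrm{loc}}$, and a chaining argument upgrading this to a pointwise Lipschitz bound on $u$.

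First, I would derive the distributional inequality $\frac{1}{2}\Delta e_u \geq -Ke_u - \kappa e_u^2$ on $\Omega$. Since $u \in W^{1,2}(\Omega, X)$ is a priori non-smooth, I would work with the approximating density $e^u_{2,\epsilon}$ and test the minimality of $u$ against admissible variations $u_s$ obtained by pushing $u(y)$ on $B_\epsilon(x) \cap \Omega$ toward the midpoint $Q_m$ of $u(x)$ and $u(y)$ along the geodesic in $X$. The CAT$(\kappa)$ midpoint comparison (Lemma \ref{lem2.3}, applied after rescaling so that $\kappa = 1$) converts $E^u \leq E^{u_s}$ into an integral inequality whose quadratic term has the correct sign precisely because $\cos(\sqrt{\kappa}\,d_X(u(x), Q_0)) \geq \cos(\sqrt{\kappa}\,\rho) > 0$ under the hypothesis $\rho < \pi/(2\sqrt\kappa)$. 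Passing $\epsilon \to 0^+$ using $e^u_{2,\epsilon} \to e_u$ in $L^1_{\mathrm{loc}}$, together with the comparison of Riemannian and Euclidean volume elements under $\mathrm{Ric}_M \geq -K$, yields the claimed Bochner inequality.

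Next, I would apply Moser iteration to the nonlinear PDI $\Delta e_u \geq -Ke_u - \kappa e_u^2$. The quadratic-in-$e_u$ term is the principal source of difficulty; it is tamed via a Caccioppoli estimate combined with an initial smallness assumption on $\int_{B_R} e_u \dv$ that can be arranged by a rescaling/covering reduction on $R$. Iterating on $(e_u)^p$ with cutoff functions then gives
\begin{equation*}
\sup_{B_{R/2}(o)} e_u \leq C\fint_{B_R(o)} e_u \dv,
\end{equation*}
with $C$ depending on $n$, $R$, the $C^1$-norm of $g$, $K$, and $\pi/(2\sqrt\kappa) - \rho$. From this $L^\infty$-bound and the weak Poincaré inequality of Proposition \ref{prop2.1}(3), I would chain Poincaré estimates on small balls along a geodesic from $x$ to $y$ (for $x, y$ in a slightly smaller concentric ball) and use the triangle inequality in $X$ to conclude $d_X(u(x), u(y)) \leq C\sqrt{\sup e_u}\cdot d(x,y)$, hence local Lipschitz continuity.

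The main obstacle I expect is the first step. The CAT$(\kappa)$ midpoint inequality with the naive linear choice $\phi(s) = s$ is not convex near zero, so the comparison does not close; as the paper's own outline indicates, one must use a specifically convex perturbation such as $\phi(s) = 2\sin(s/2) + 4\sin^2(s/2)$. The correct sign of the $\kappa e_u^2$ term, the absorption of the $O(\epsilon)$ errors in the limit $\epsilon \to 0^+$, and the precise quantitative dependence of the final constant on the gap $\pi/(2\sqrt\kappa) - \rho$ all hinge on this choice of test function and on the quantitative version of Lemma \ref{lem2.3}.
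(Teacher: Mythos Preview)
The paper does not prove Theorem~\ref{thm2.4}; it is quoted from \cite{korevaar-schoen93,serbinowski95} as a known input and used throughout Sections~3--4 as a standing hypothesis (see the italicized assumption at the start of Section~4). So there is no ``paper's own proof'' to compare against.

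Your overall strategy---a weak distributional Bochner-type inequality for $e_u$ via finite differences, then Moser iteration, then a chaining/Poincar\'e argument---is exactly the route taken in the cited references, so in that sense your plan is on target. However, you have imported the wrong machinery for Step~1. Lemma~\ref{lem2.3} and the convex perturbation $\phi(s)=2\sin(s/2)+4\sin^2(s/2)$ are tools the present paper introduces to prove the \emph{sharp} Bochner inequality Theorem~\ref{bochner} (with the extra term $|\nabla\mathrm{Lip}\,u|^2$); they play no role in the basic Lipschitz regularity of Theorem~\ref{thm2.4}, and in fact the entire Section~4 argument (the auxiliary functions $f_t$, Proposition~\ref{prop4.2}, etc.) explicitly \emph{assumes} Theorem~\ref{thm2.4} in order to know $\ell_0=\mathrm{Lip}_{B_{2R}}u<\infty$. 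Invoking those ingredients here would be circular.

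For the weak Bochner inequality you actually need, the Korevaar--Schoen finite-difference method gives $\int_\Omega e_u(\Delta\eta+C|\nabla\eta|+C\eta)\,dv_g\ge 0$ directly from the $CAT(\kappa)$ quadrilateral comparison (the inequality (2.1v) in \cite{korevaar-schoen93}, which is the source of Lemma~\ref{lem2.3} but used in a much simpler way), with no need for the convexity trick involving $\phi$. Your discussion of why $\phi(s)=s$ ``fails'' and why the perturbed $\phi$ is required is a misreading of Section~1.3: that obstruction arises only when one tries to retain the positive gradient term $|\nabla\mathrm{Lip}\,u|^2$, not for the coarse inequality $\tfrac12\Delta e_u\ge -Ke_u-\kappa e_u^2$ that suffices for Moser iteration.
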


We need also the following property:
\begin{lem} {\rm (Serbinowski \cite[Proposition 1.17]{serbinowski95}, Fuglede \cite[Lemma 2]{fug08}).}\label{lem2.5}
Let $\kappa>0$.  Assume that its image $u(\Omega)$ is contained in a ball $B_\rho(P)\subset X$ with radius $\rho<\frac{\pi}{2\sqrt\kappa}$. Then the function
$f_P(x):= \cos\big(\sqrt \kappa \cdot d_X\big(u(x),P\big)\big)$
 satisfies $  f_P\in W^{1,2}(\Omega)$ and
\begin{equation}\label{equ2.4}
 \Delta {f_P}\ls - \kappa\cdot f_P\cdot e_u
\end{equation}
in the sense of distributions. If $\kappa=0$, then for any $P\in X$ we have $\Delta d^2_X\big(P,u(x)\big)\gs 2e_u$ in the sense of distributions.
\end{lem}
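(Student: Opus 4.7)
My plan is to combine the $CAT(\kappa)$ spherical median inequality (resp.\ the $CAT(0)$ parallelogram inequality) with the energy-minimizing property of $u$, in the Korevaar--Schoen--Serbinowski--Fuglede style.

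First I rescale $d_X \mapsto \sqrt\kappa\, d_X$ when $\kappa > 0$ to reduce to $\kappa = 1$: the target becomes $CAT(1)$, the energy density rescales as $e_u \mapsto \kappa e_u$, and the claim becomes $\Delta \cos d_X(u, P) \le -\cos d_X(u, P)\cdot e_u$. The Sobolev regularity $f_P \in W^{1,2}_{\mathrm{loc}}(\Omega)$ is automatic from Theorem \ref{thm2.4}: $u$ is locally Lipschitz, $d_X(\cdot, P)$ is $1$-Lipschitz on $X$, and $\cos$ is smooth. The geometric ingredient is the $CAT(1)$ median inequality: for $Q, R \in X$ with midpoint $M$ of the geodesic $QR$ and for $P$ with $d(P, Q), d(P, R) < \pi/2$,
$$\cos d(P, Q) + \cos d(P, R) \;\le\; 2\cos d(P, M)\,\cos\!\bigl(d(Q, R)/2\bigr),$$
which follows from the $CAT(1)$ distance comparison $d(P, M) \le d_{\mathbb S^2}(\bar P, \bar M)$ and the spherical median identity on $\mathbb S^2$.

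To upgrade this to a distributional Laplacian estimate, fix $\eta \in C_0^\infty(\Omega)$ with $\eta \ge 0$ and, for small $s > 0$, consider the admissible variation $u_s(y) := \gamma_{u(y), P}(s\,\eta(y))$, where $\gamma_{u(y), P}$ is the unit-speed geodesic from $u(y)$ toward $P$; this is well defined since $u(\Omega) \subset B_\rho(P)$ with $\rho < \pi/(2\sqrt\kappa)$. Since $u_s$ agrees with $u$ off $\mathrm{supp}(\eta)$ and lies in $W^{1,2}_u(\Omega, X)$, energy minimality gives $\liminf_{s \to 0^+} s^{-1}(E^{u_s} - E^u) \ge 0$. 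For nearby $y, y' \in \Omega$, the $CAT(1)$ comparison applied to the triangle $u(y), u(y'), P$ together with a Taylor expansion in $s$ of the spherical distance between the corresponding variation points via the spherical law of cosines yields
$$d^2\bigl(u_s(y), u_s(y')\bigr) \;\le\; d^2\bigl(u(y), u(y')\bigr) + 2s\,T_1(y, y') + O(s^2),$$
where the explicit form of $T_1$ dictated by spherical trigonometry is such that, after substituting into $e^{u_s}_\epsilon$, integrating over $y' \in B_\epsilon(y)$ and letting $\epsilon \to 0$, the contributions reorganize via the chain rule $\nabla f_P = -\sqrt\kappa\sin(\sqrt\kappa\, d_X(u, P))\,\nabla d_X(u, P)$ into $\nabla\eta\cdot\nabla f_P + \eta\, f_P\, e_u$. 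The energy-minimality inequality then reads
$$\int_\Omega \bigl(\nabla\eta \cdot \nabla f_P + \eta\, f_P\, e_u\bigr)\,dv_g \;\ge\; 0,$$
which is precisely $\Delta f_P \le -f_P\, e_u$ in the sense of distributions.

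The $\kappa = 0$ case is handled by the same variational scheme, replacing the median inequality by the $CAT(0)$ parallelogram $d^2(P, M) \le \tfrac12 d^2(P, Q) + \tfrac12 d^2(P, R) - \tfrac14 d^2(Q, R)$; this yields $\Delta d_X^2(P, u) \ge 2 e_u$. The main technical obstacle is the first-variation computation: one must expand $d^2(u_s(y), u_s(y'))$ using spherical trigonometry carefully enough that the $\nabla\eta$-contribution, after averaging over $y' \in B_\epsilon(y)$, reproduces $\nabla\eta \cdot \nabla f_P$ with exactly the right coefficient. Any miscount of the factor $\sin d_X(u, P)$ or its sign would produce the wrong constant in front of $f_P\, e_u$; once this bookkeeping is done, the rest is routine Korevaar--Schoen approximation.
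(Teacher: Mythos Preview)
The paper does not supply its own proof of this lemma; it simply quotes the result from Serbinowski's thesis and Fuglede's paper. Your sketch is the standard variational argument found in those references: deform $u$ along geodesics toward $P$ with speed $s\eta$, bound $d_X^2(u_s(y),u_s(y'))$ from above via the $CAT(\kappa)$ comparison and spherical (resp.\ Euclidean) trigonometry, plug into the approximating energy, and read off the distributional inequality from $E^{u_s}\ge E^u$ at first order in $s$. Your candid remark that the first-variation bookkeeping is where the work lies is exactly right; the expansion of the comparison distance and the passage $\epsilon\to 0$ are carried out carefully in the cited sources, and your outline matches them. One small correction: the $W^{1,2}(\Omega)$ regularity of $f_P$ does not need the interior Lipschitz regularity of Theorem~\ref{thm2.4}; it follows already from $u\in W^{1,2}(\Omega,X)$ together with the fact that $Q\mapsto\cos(\sqrt\kappa\,d_X(Q,P))$ is Lipschitz and bounded on $B_\rho(P)$, which is what one needs since the Lipschitz regularity is only interior.
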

Recall that
$$  {\rm Lip}u(x) =\limsup_{y\to x}\frac{d_X\big(u(x),u(y)\big)}{|xy|}=\limsup_{r\to 0}\sup_{y\in B_x(r)}\frac{d_X\big(u(x),u(y)\big)}{r}.$$
The above lemma implies the following  point-wise estimates, which is a corollary of the  mean value inequality for subharmonic functions.
\begin{cor}\label{cor2.6}
 Let $u$ be a harmonic map  from $\Omega$ to $X$. Assume that its image $u(\Omega)$ is contained in a ball $B_\rho \subset X$ with radius $\rho<\frac{\pi}{2\sqrt\kappa}$ if $\kappa>0$. Then there exists a constant $C=C(n,\sqrt KR)$ depending only on $n$ and $\sqrt KR$  such that:   for any ball $B_{R}$ with $B_{2R}\subset\subset \Omega$, we have
 \begin{equation*}
{\rm Lip}^2u(x)\ls C\cdot e_u(x),\qquad {\rm for\ almost\ all}\ \ x\in  B_{R/6}.
\end{equation*}
\end{cor}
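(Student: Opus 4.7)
The plan is to combine Lemma \ref{lem2.5} with the classical sup estimate (mean value inequality) for nonnegative subharmonic functions on a Riemannian manifold with a Ricci lower bound. First I would fix a point $x_0\in B_{R/6}$ that is simultaneously a Lebesgue point of $e_u$ and a point at which Kirchheim's metric differential $mdu_{x_0}$ \cite{kir94} exists; by Theorem \ref{thm2.4} the set of bad points has measure zero. The goal is to produce, on some ball $B_{r_0}(x_0)\subset\Omega$, a nonnegative subharmonic function $h$ such that (a) $h(y)=\tfrac12 d_X^2(u(x_0),u(y))+o\bigl(d_X^2(u(x_0),u(y))\bigr)$ as $y\to x_0$, and (b) $\Delta h\gs c\cdot e_u$ distributionally for some constant $c>0$.

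For $\kappa=0$, Lemma \ref{lem2.5} directly yields this with $h(y)=\tfrac12 d_X^2(u(x_0),u(y))$ on all of $\Omega$. For $\kappa>0$ the base point $P=u(x_0)$ is not automatically admissible in Lemma \ref{lem2.5}, since only $u(\Omega)\subset B_{2\rho}(u(x_0))$ is guaranteed and $2\rho$ may exceed $\pi/(2\sqrt\kappa)$. Here I would use the local Lipschitz regularity of Theorem \ref{thm2.4} to pick $r_0>0$ small enough that $u(B_{r_0}(x_0))\subset B_{\pi/(4\sqrt\kappa)}(u(x_0))$, and then apply Lemma \ref{lem2.5} with $P=u(x_0)$ to the restricted harmonic map $u|_{B_{r_0}(x_0)}$; this produces the subharmonic function $h(y):=\kappa^{-1}\bigl(1-\cos(\sqrt\kappa\, d_X(u(x_0),u(y)))\bigr)$ satisfying $\Delta h\gs \tfrac{1}{\sqrt 2}e_u$ and $h=\tfrac12 d_X^2+O(\kappa\, d_X^4)$ near $x_0$.

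Next I would invoke the standard sup estimate for nonnegative subharmonic functions on $(M,g)$ with $Ric_M\gs -K$: for every $0<r<r_0/2$,
\begin{equation*}
\sup_{y\in B_{r/2}(x_0)}h(y)\ls C(n,\sqrt K R)\cdot \fint_{B_r(x_0)}h(y)\,\dv(y).
\end{equation*}
By property (a), as $r\to 0$ this becomes $\sup_{B_{r/2}(x_0)}d_X^2(u(x_0),u(\cdot))\ls C\cdot\fint_{B_r(x_0)}d_X^2(u(x_0),u(\cdot))\,\dv\cdot(1+o(1))$, and by the very definition of the approximating energy density the right-hand side equals a dimensional multiple of $r^2\cdot e^u_{2,r}(x_0)\cdot(1+o(1))$. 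Specializing $r=2|x_0 y|$ for $y\in B_{r_0/2}(x_0)\setminus\{x_0\}$ and dividing by $|x_0 y|^2$ gives
\begin{equation*}
\frac{d_X^2(u(x_0),u(y))}{|x_0 y|^2}\ls C'(n,\sqrt K R)\cdot e^u_{2,2|x_0 y|}(x_0)\cdot(1+o(1)).
\end{equation*}
Taking $\limsup_{y\to x_0}$ and using $\lim_{r\to 0}e^u_{2,r}(x_0)=e_u(x_0)$ (which at the chosen $x_0$ follows by plugging Kirchheim's expansion $d_X(u(\exp_{x_0}(s\xi)),u(x_0))=s\cdot mdu_{x_0}(\xi)+o(s)$ into the defining integral of $e^u_{2,r}$) yields ${\rm Lip}^2 u(x_0)\ls C'(n,\sqrt K R)\cdot e_u(x_0)$, as required.

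The main obstacle I foresee is the $\kappa>0$ case: Lemma \ref{lem2.5} does not directly provide a subharmonic function centered at $u(x_0)$ on all of $\Omega$, so one must first shrink to a ball $B_{r_0}(x_0)$ on which the image of $u$ sits inside an admissible ball about $u(x_0)$, and this shrinking relies essentially on the a priori local Lipschitz regularity from Theorem \ref{thm2.4}. A secondary (minor) technicality is the pointwise limit $e^u_{2,r}(x_0)\to e_u(x_0)$ at almost every $x_0$, which is routine once Kirchheim's metric differential is at hand.
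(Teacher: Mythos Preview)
Your approach is correct and in fact more direct than the paper's. The paper works on the product $M\times M$: it shows that $\rho_u(x,y)=d_X(u(x),u(y))$ is $\Delta^{(2)}$-subharmonic on $O\times O$, applies the mean value inequality for subharmonic functions on the $2n$-dimensional product, and then invokes the weak Poincar\'e inequality (Proposition \ref{prop2.1}(3)) to bound $\iint \rho_u^2$ by $r^{n+2}\int e_u$, before passing to the limit at Lebesgue points of $e_u$. Your argument stays on $M$: you use the subharmonicity of $h(\cdot)=\tfrac12 d_X^2(u(x_0),u(\cdot))$ (or its $1-\cos$ variant), apply the mean value inequality on $M$, and then recognise $\fint_{B_r(x_0)} d_X^2(u(x_0),u(\cdot))\,\dv$ as a dimensional multiple of $r^2 e^u_{2,r}(x_0)$ directly from the definition of the approximating density---no Poincar\'e inequality needed. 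The passage to the limit via Kirchheim's metric differential is clean: pointwise convergence $e^u_{2,r}(x_0)\to e_u(x_0)$ follows by dominated convergence (the Lipschitz bound on $u$ gives the dominator after rescaling $y=\exp_{x_0}(r\eta)$), so you do not even need uniformity in $\xi$ of the $o(t)$ in Definition \ref{def2.8}. Two small remarks: your item (b) ($\Delta h\gs c\cdot e_u$) is never actually used---subharmonicity $\Delta h\gs 0$ suffices for the sup estimate; and the restriction to $B_{R/6}$ in the statement is immaterial in your argument, since you take $r\to 0$ anyway and the mean value constant only improves as $r\to 0$.
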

\begin{proof}
For the case $\kappa=0$, this is Theorem 5.5 in \cite{zz17-lip}. We need only to show the assertion for the case $\kappa>0$. Without loss of the generality, we can assume $\kappa=1$ in this case.  The argument is similar to  the proof of Theorem 5.5 in \cite{zz17-lip}.

$(i).$ Fix any $z$ with $B_{2R}(z)\subset\subset \Omega$.   From the continuity of $u$, there exists a small neighborhood $O$ of $z$ such that ${\rm diam}u(O) <\pi/2$ and $O\subset B_R(z)$, where ${\rm diam}u(O)$ is the diameter of $u(O)$.

By using Lemma \ref{lem2.5} and the fact that $|\nabla d_X(u(x),P)| \ls e_u$ for any fixed $P\in X$, it is easy the check that $\Delta d_X(u(x),u(y_0))\gs0$ on $O$  for any fixed $y_0\in  O$, in the sense of distributions.
Let $\Delta^{(2)}$ be  the Laplace-Beltrami operator on $M\times M$, the product manifold (with the product metric and  the product measure). Consider the function $\rho_u:=d_X\big(u(x),u(y)\big)$ on $O\times O$.
Hence, we obtain
 \begin{equation}\label{equ2.5}
 \Delta^{(2)}\rho_u(x,y)\gs 0 \quad {\rm on}\quad O\times O
 \end{equation}
 in the sense of distributions  (see the step (iii) in the proof of \cite[Proposition 5.4]{zz17-lip} for the details).

  From the mean value inequality for subharmonic functions  on  $O\times O$ (see \cite[Theorem 6.2 of Chapter II]{schoen-yau94}), we  conclude that, for any ball $B_r((z_1,z_2))$  with $B_{2r}((z_1,z_2))\subset\subset O\times O$,
\begin{equation}\label{equ2.6}
\begin{split}
\sup_{(x,y)\in B_{r}((z_1,z_2))}\rho_u^2(x,y)&\ls C_12^{C_2(1+\sqrt Kr)}\cdot \fint_{B_{2r}((z_1,z_2))} \rho_u^2(x,y)dv_g(x)dv_g(y)\\
&\ls C_3(n,\sqrt KR)\cdot \fint_{B_{2r}((z_1,z_2))} \rho_u^2(x,y)dv_g(x)dv_g(y),
\end{split}
\end{equation}
where the constants $C_1,C_2$ depend only on $n$, and $C_3(n,\sqrt K R)=C_12^{C_2(1+\sqrt KR)}$.

$(ii).$
Since $B_{2r}((z,z))\subset B_{2r}(z)\times B_{2r}(z)$, the Poincar\'e inequality for $W^{1,2}(\Omega, X)$-maps (see \cite{kuwae-shioya03}, and also Proposition 2.1 (3) and Remark 2.1) states that
the RHS of (\ref{equ2.6})  for $z_1=z_2=z$  can be dominated by $C_4(n,\sqrt KR)\cdot r^{n+2}\int_{B_{12r}(z)}e_u(x)\dv(x).$ Therefore, we have
\begin{equation}\label{equ2.7}
\begin{split}
\sup_{y\in B_r(z)}\frac{\rho_u^2(z,y)}{r^2}&\ls\sup_{(x,y)\in B_{r}((z,z))}\frac{\rho_u^2(x,y)}{r^2}\\
&\ls C_3C_4\cdot\frac{r^n\cdot {\rm vol}(B_z(12r))}{vol\big(B_{2r}((z,z))\subset \Omega\times\Omega\big)} \fint_{B_{12r}(z)} e_u(x)dv_g(x)\\
&\ls C_5(n,\sqrt KR)\cdot  \fint_{B_{12r}(z)} e_u(x)dv_g(x),
\end{split}
\end{equation}
where we have used Bishop-Gromov inequality and  $vol\big(B_{2r}((z,z))\big)\gs vol^2\big(B_r(z)\big)$.

Notice that $\lim_{r\to0}\fint_{B_{12r}(z)} e_u(x)dv_g(x)=e_u(z)$ for almost all $z\in B_{R/6}$ and that
$ {\rm Lip}u(z)=\limsup_{r\to0}\sup_{y\in B_{r}(z)}\rho_u(y,z)/r$.
 By letting   $r\to0$ in (\ref{equ2.7}), it follows the desired estimate.
\end{proof}

\subsection{Generalized Rademacher theorem for Lipschitz maps}$\ $

Let $\Omega$ be a bounded domain of an $n$-dimensional Riemannian manifold $(M,g)$. Recall that the classical Rademacher  theorem states that any Lipschitz function $f:\Omega \to \mathbb R$ is differentiable at almost all $x\in \Omega$.

For our purpose, we have to consider the differentiability of  maps into a metric space $(X,d_X)$.
Let us recall the notion of \emph{metric differential} for maps from $\Omega$ into a metric space, which was introduced by Kirchheim in \cite{kir94}.
\begin{defn}\label{def2.8}
We say that a map $f:\Omega\to X$ is \emph{metrically differentiable} at $x_0$ if there exists
 a semi-norm $\|\cdot\|_{x_0}$ in $T_{x_0}M:=\R^n$ such that
$$d_X\big(f(\exp_{x_0}(t\xi)),f(x_0)\big)-t\cdot\|\xi\|_{x_0}=o(t),$$
for all $\xi\in \mathbb S^{n-1}\subset T_{x_0}M$.
 This semi-norm will be called the \emph{metric differential} and be denoted by $mdf_{x_0}$.
\end{defn}
The following generalized Rademacher's theorem for maps was given in \cite{kir94}.
\begin{thm}[Kirchheim \cite{kir94}]\label{thm2.8}
Any Lipschitz map $f:\Omega\to X$ is metrically differentiable at almost all $x\in \Omega$.
\end{thm}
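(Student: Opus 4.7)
The plan is to follow Kirchheim's original argument \cite{kir94}, adapted to the Riemannian setting by passing through exponential coordinates. Since the conclusion is local and the exponential map $\exp_{x_0}: T_{x_0}M \to M$ at any $x_0 \in \Omega$ is a smooth diffeomorphism with $d(\exp_{x_0})_0 = \mathrm{id}$, metric differentiability of $f$ at $x_0$ in $(M,g)$ is equivalent to metric differentiability of $f \circ \exp_{x_0}$ at $0 \in T_{x_0}M \cong \R^n$. I may therefore assume $\Omega \subset \R^n$.

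Since $\Omega$ is separable, the image $f(\Omega) \subset X$ is separable. Choose a countable dense sequence $\{P_k\}_{k \geq 1}$ in $f(\Omega)$ and define the real-valued Lipschitz functions
$$\phi_k(x) := d_X\big(f(x), P_k\big), \qquad k \geq 1,$$
each with the same Lipschitz constant $L$ as $f$. By the classical Rademacher theorem each $\phi_k$ is differentiable almost everywhere, so the intersection $E$ of the full-measure sets $\{x: \phi_k \text{ differentiable at } x\}$ is still of full measure. On $E$ I would define the candidate seminorm
$$mdf_x(\xi) := \sup_{k \geq 1} |\nabla \phi_k(x) \cdot \xi|, \qquad \xi \in \R^n.$$
This is manifestly a seminorm (sup of absolute values of linear functionals), bounded by $L|\xi|$, and $L$-Lipschitz in $\xi$. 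Density of $\{P_k\}$ in $f(\Omega)$ yields the Kuratowski-type identity $d_X(f(y), f(x)) = \sup_k |\phi_k(y) - \phi_k(x)|$ for all $x, y \in \Omega$: one direction is the triangle inequality, and the other follows by choosing $P_k$ arbitrarily close to $f(x)$.

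It remains to establish, for a.e. $x \in E$, the asymptotic formula
$$d_X\big(f(x+t\xi), f(x)\big) = t \cdot mdf_x(\xi) + o(t) \quad (t \to 0^+),$$
with $o(t)$ uniform in $\xi \in \mathbb{S}^{n-1}$. The lower bound is straightforward: for each fixed $k$, classical differentiability of $\phi_k$ gives $|\phi_k(x+t\xi) - \phi_k(x)|/t \to |\nabla \phi_k(x) \cdot \xi|$ with remainder $o(t)$ automatically uniform in $\xi \in \mathbb{S}^{n-1}$, and taking the supremum over $k$ yields $\liminf_{t \to 0^+} d_X(f(x+t\xi), f(x))/t \geq mdf_x(\xi)$.

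The main obstacle is the matching upper bound: the Kuratowski identity expresses $d_X(f(x+t\xi), f(x))$ as $\sup_k |\phi_k(x+t\xi) - \phi_k(x)|$ with a differentiability remainder $\sup_k |r_k(t\xi)|$ which is a priori not $o(t)$ uniformly in $k$. The resolution — and the heart of Kirchheim's argument — combines two ingredients: (i) the family $\{\xi \mapsto \nabla\phi_k(x) \cdot \xi\}_k$ is equi-$L$-Lipschitz on $\mathbb{S}^{n-1}$ and uniformly bounded by $L$, so $mdf_x$ can be approximated on a finite $\varepsilon/(3L)$-net $\{\xi_j\}_{j=1}^N$ by finitely many indices $k_j$ chosen to satisfy $|\nabla\phi_{k_j}(x) \cdot \xi_j| \geq mdf_x(\xi_j) - \varepsilon/3$; (ii) applying Rademacher to the Lipschitz curves $t \mapsto f(x+t\xi_j)$ and invoking Fubini refines $E$ to a full-measure set on which the metric derivatives in the finitely many net directions exist and coincide with $|\nabla \phi_{k_j}(x) \cdot \xi_j|$. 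A standard $3\varepsilon$-argument, using equi-Lipschitz control in $\xi$ for the common $\delta > 0$ coming from the finitely many $\phi_{k_j}$, then matches the two bounds uniformly in $\xi$ and gives the desired metric differentiability almost everywhere. The uniformity in $\xi$ via the $\varepsilon$-net reduction to a finite family is the delicate point and is exactly what is carried out in \cite{kir94}.
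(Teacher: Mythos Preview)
The paper does not give its own proof of this theorem; it is stated with attribution to Kirchheim \cite{kir94} and used as a black box. So there is no proof in the paper against which to compare your proposal.

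Your outline is a faithful sketch of Kirchheim's strategy: the Kuratowski-type reduction via the distance functions $\phi_k(x)=d_X(f(x),P_k)$, classical Rademacher applied to each $\phi_k$, and the candidate seminorm $mdf_x(\xi)=\sup_k|\nabla\phi_k(x)\cdot\xi|$. The reduction to $\Omega\subset\R^n$ through exponential coordinates is legitimate (the statement is local and $\exp_{x_0}$ is a smooth diffeomorphism with $d(\exp_{x_0})_0=\mathrm{id}$), and the lower bound is indeed routine.

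One point in your upper-bound sketch deserves care. Step (ii) says ``applying Rademacher to the Lipschitz curves $t\mapsto f(x+t\xi_j)$ and invoking Fubini''. Taken literally this yields, for each fixed $x$, differentiability for a.e.\ $t$ along the line $x+t\xi_j$, which is not the statement you need at a.e.\ base point $x$. What Kirchheim actually establishes is that for a.e.\ $x$ the remainder $\sup_k\big|\phi_k(x+h)-\phi_k(x)-\nabla\phi_k(x)\cdot h\big|$ is $o(|h|)$ --- i.e.\ the differentiability is uniform over the \emph{countable family} $\{\phi_k\}$, not merely over directions $\xi$. The $\varepsilon$-net on $\mathbb S^{n-1}$ then upgrades pointwise-in-$\xi$ to uniform-in-$\xi$, but only after this uniformity in $k$ is in hand. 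You acknowledge the difficulty by deferring to \cite{kir94} in your final sentence, which is fair; just be aware that your phrasing of (ii) slightly mislocates where the real work lies. Since the paper itself only cites the result, your outline already goes beyond what the paper provides.
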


If a Lipschitz continuous map $f:\Omega\to X$ is metrically differentiable at $x$,  we put
\begin{equation}
G_f(x):=\max_{\xi\in\mathbb S^{n-1}}mdf_x(\xi).
\end{equation}
\begin{lem}\label{lem2.9}
 Let $f:\Omega\to X$ be a Lipschitz function. If $f$ is metrically differentiable at $x$, then we have
\begin{equation}\label{equ2.8}
G_f(x)={\rm Lip}f(x).
\end{equation}
\end{lem}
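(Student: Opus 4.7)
The plan is to prove the equality $G_f(x)={\rm Lip}f(x)$ by establishing the two inequalities separately; the first is a direct consequence of the definition of metric differentiability, while the second requires combining the definition with the Lipschitz hypothesis and the smoothness of the exponential map.

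For the inequality $G_f(x)\le {\rm Lip}f(x)$, I would fix any direction $\xi\in\mathbb S^{n-1}$ and test the limsup in the definition of ${\rm Lip}f(x)$ along the geodesic $\gamma_\xi(t):=\exp_x(t\xi)$. Since $t<{\rm inj}(x)$ implies $d(x,\gamma_\xi(t))=t$, the metric differentiability identity
\[
d_X(f(\gamma_\xi(t)),f(x))=t\cdot mdf_x(\xi)+o(t)
\]
gives immediately $mdf_x(\xi)\le\limsup_{y\to x}d_X(f(x),f(y))/|xy|={\rm Lip}f(x)$. Taking the sup over $\xi\in\mathbb S^{n-1}$ yields the first inequality.

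For the reverse inequality ${\rm Lip}f(x)\le G_f(x)$, the idea is to reduce an arbitrary limit direction to a radial one using the Lipschitz control of $f$. I would pick a sequence $y_k\to x$ realizing the limsup that defines ${\rm Lip}f(x)$, write $y_k=\exp_x(t_k\xi_k)$ with $t_k=d(x,y_k)\to 0$ and $\xi_k\in\mathbb S^{n-1}$, and use compactness of $\mathbb S^{n-1}$ to extract a subsequence with $\xi_k\to\xi_\infty$. Let $L$ denote the local Lipschitz constant of $f$ near $x$ (finite since $f$ is Lipschitz). By the triangle inequality and Lipschitz control,
\[
\bigl|d_X(f(y_k),f(x))-d_X(f(\exp_x(t_k\xi_\infty)),f(x))\bigr|\le L\cdot d(\exp_x(t_k\xi_k),\exp_x(t_k\xi_\infty)).
\]
Smoothness of the exponential map at the origin of $T_xM$ bounds the right-hand side by $C\,t_k|\xi_k-\xi_\infty|=o(t_k)$. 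Combining with the metric differentiability along the fixed direction $\xi_\infty$,
\[
\frac{d_X(f(y_k),f(x))}{t_k}=mdf_x(\xi_\infty)+o(1)\le G_f(x),
\]
which passing to the limsup gives ${\rm Lip}f(x)\le G_f(x)$.

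The only point requiring a bit of care is the continuity of the semi-norm $mdf_x$ on $\mathbb R^n$ that is needed to pass from $mdf_x(\xi_k)$ to $mdf_x(\xi_\infty)$ in a fully symmetric formulation: any semi-norm on a finite-dimensional space is automatically continuous, and in our setting the bound $mdf_x(\xi)\le L$ for $\xi\in\mathbb S^{n-1}$ is immediate from the metric differentiability together with the Lipschitz property of $f$. This continuity, combined with the extraction of a convergent sub-direction $\xi_k\to\xi_\infty$, is the main (mild) obstacle, and once it is noted the whole argument reduces to the asymptotic estimate above.
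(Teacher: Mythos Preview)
Your argument is correct. For the inequality ${\rm Lip}f(x)\le G_f(x)$ you take a slightly different route from the paper. The paper simply writes $y_j=\exp_x(t_j\xi_j)$ and invokes metric differentiability directly with the varying directions $\xi_j$, obtaining $d_X(f(y_j),f(x))=mdf_x(\xi_j)\cdot t_j+o(t_j)$ and hence $\lim_j mdf_x(\xi_j)\le G_f(x)$; this uses (implicitly) that the $o(t)$ in Kirchheim's definition is uniform in $\xi\in\mathbb S^{n-1}$, which is indeed part of the standard notion. Your version instead extracts a limit direction $\xi_\infty$ and uses the Lipschitz property of $f$ together with the smoothness of $\exp_x$ to replace $\xi_k$ by $\xi_\infty$, after which only differentiability along the single direction $\xi_\infty$ is needed. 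The paper's proof is shorter; yours is a few lines longer but has the mild advantage of being robust under a purely pointwise reading of the differentiability condition. Your final remark about continuity of the semi-norm is fine but not actually needed in your own argument, since you already bypass the passage $mdf_x(\xi_k)\to mdf_x(\xi_\infty)$ via the Lipschitz estimate.
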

\begin{proof}
From the definition of $G_f(x)$, it is clear that $G_f(x)\ls {\rm Lip}f(x).$

For the converse, we choose a sequence of points $\{y_j:=\exp_x(t_j\xi_j)\}_{j=1}^\infty\subset \Omega$ such that $\lim_{j\to\infty}t_j=0$, $|\xi_j|=1$, and
$${\rm Lip}f(x)=\lim_{j\to \infty}\frac{d_X\big(f(y_j),f(x)\big)}{t_j}.$$
Since $f$ is metrically differentiable at $x$, we have
\begin{equation*}
\begin{split}
d_X\big(f(y_j),f(x)\big)&=mdf_x(\xi_j)\cdot t_j +o(t_j),
\end{split}
\end{equation*}
From  the definition of $G_f(x)$, we have
$${\rm Lip}f(x)=\lim_{j\to \infty}mdf_x(\xi_j)\ls G_f(x).$$
 The proof is complete.
\end{proof}

\section{An asymptotic mean value inequality}

We will consider some asymptotic behaviors of harmonic maps from a domain of smooth Riemannian manifold  to a $CAT(1)$-space. Let us begin with the following mean value property, which is similar to  Proposition 2.1 of Chapter I in \cite{schoen-yau94}.
\begin{lem}\label{lem3.1}
Let $(M,g)$ be an $n$-dimensional   Riemannian manifold with $Ric_M\gs -K$ for some $K\in\mathbb R$. Suppose that $f$ is a Lipschitz  function on an open subset $\Omega\subset M$, $f\gs0$, and $\Delta f\ls g\in L_{\rm loc}^1(\Omega)$ in the sense of distributions. Then for any $p\in \Omega$ and $R>0$ with $B_R(p)\subset\subset \Omega$,
\begin{equation}\label{equ3.1}
\frac{1}{A_K(R)}\int_{\partial B_R(p)}f\ls f(p)+ \int_0^R\frac{\int_{B_r(p)}g(x) \dv(x)}{A_K(r)}dr,
\end{equation}
where $A_K(r)$  is the area of a geodesic sphere of radius $r$ in the simply connected space form of constant curvature $-K/(n-1).$
\end{lem}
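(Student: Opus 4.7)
The approach is a classical Gauss--Green / spherical-averaging argument combined with Bishop--Gromov (Laplacian) comparison. Set $I(r):=\int_{\partial B_r(p)}f\,dA$ and $\psi(r):=I(r)/A_K(r)$. Since $f$ is continuous at $p$ (being Lipschitz), $\psi(r)\to f(p)$ as $r\to 0^+$, so the desired conclusion reduces to showing
\[\psi'(r)\ls \frac{1}{A_K(r)}\int_{B_r(p)}g\,dv\qquad \text{for a.e.\ }r\in(0,R),\]
together with the absolute continuity of $\psi$, after which one integrates from $0^+$ to $R$.

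First I would compute $\psi'$ via the exponential chart at $p$. Writing $I(r)=\int_{\mathbb S^{n-1}}f(\exp_p(r\xi))J(r,\xi)\,d\sigma(\xi)$ (valid for a.e.\ $\xi$ at each $r<R$, since the cut locus has measure zero in $\mathbb S^{n-1}$) and differentiating under the integral, the Lipschitz regularity of $f$ together with the smoothness of $J$ in $r$ yields, for a.e.\ $r$,
\[I'(r)=\int_{\partial B_r(p)}\partial_\rho f\,dA+\int_{\partial B_r(p)}f\cdot \Delta \rho\,dA,\]
using the identity $\partial_r\log J(r,\xi)=\Delta\rho(\exp_p(r\xi))$. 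Since $Ric_M\gs -K$, the Laplacian comparison theorem gives $\Delta \rho\ls A_K'(\rho)/A_K(\rho)$; combined with the hypothesis $f\gs 0$, this rearranges to
\[A_K(r)\psi'(r)=I'(r)-\frac{A_K'(r)}{A_K(r)}\,I(r)\ls \int_{\partial B_r(p)}\partial_\rho f\,dA.\]

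To bound the radial flux $\int_{\partial B_r}\partial_\rho f\,dA$ by $\int_{B_r}g\,dv$, I would test the distributional inequality $\Delta f\ls g$ against smooth radial cutoffs $\chi_\varepsilon(x):=\eta_\varepsilon(\rho(x))$, where $\eta_\varepsilon\gs 0$ is a smooth approximation from above of $\mathbf 1_{[0,r]}$. Because $f$ is Lipschitz (hence $\nabla f\in L^\infty$), integration by parts legitimately gives $\int f\,\Delta\chi_\varepsilon\,dv=-\int \eta_\varepsilon'(\rho)\,\partial_\rho f\,dv$, and by the coarea formula this equals $-\int_0^\infty \eta_\varepsilon'(s)\bigl(\int_{\partial B_s}\partial_\rho f\,dA\bigr)ds$. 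Sending $\varepsilon\to 0$ through Lebesgue points of the sphere-flux function $s\mapsto\int_{\partial B_s}\partial_\rho f\,dA$, the tested inequality $\int f\,\Delta\chi_\varepsilon\,dv\ls \int g\,\chi_\varepsilon\,dv$ passes to
\[\int_{\partial B_r(p)}\partial_\rho f\,dA\ls \int_{B_r(p)}g\,dv\qquad\text{for a.e.\ }r\in(0,R).\]
Combining the last two displays and integrating, together with $\psi(0^+)=f(p)$, completes the proof.

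The main obstacle is precisely this distributional step: a priori $\Delta f$ is only a signed Radon measure (namely $g\,dv$ minus a nonnegative Radon measure), so $\mathbf 1_{B_r}$ is not an admissible test function and a careful mollification through $\chi_\varepsilon$ is required, which only delivers the sphere-flux inequality for a.e.\ $r$. A secondary technicality is that $\Delta\rho\ls A_K'(\rho)/A_K(\rho)$ is classical only outside the cut locus of $p$; one must either invoke its global distributional form on $M\setminus\{p\}$ or exploit that the cut locus is a measure-zero set avoided along a full-measure family of radial geodesics, which suffices for the almost-everywhere statement we need.
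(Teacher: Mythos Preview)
Your proposal is correct and follows essentially the same route as the paper: both differentiate the spherical average $\psi(r)=A_K(r)^{-1}\int_{\partial B_r}f$, apply the Laplacian comparison $H=\Delta\rho\ls A_K'/A_K$ together with $f\gs 0$, use the divergence theorem to bound the radial flux by $\int_{B_r}g$, and integrate. The paper simply writes $\int_{B_r}g\gs\int_{B_r}\Delta f=\int_{\partial B_r}\partial f/\partial r$ without comment, whereas you supply the honest distributional justification via radial cutoffs---a welcome bit of extra care, but not a genuinely different argument.
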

\begin{proof}
Since $\Delta f\ls g$, we have by divergence theorem that
\begin{equation}\label{equ3.2}
\begin{split}
\int_{B_r(p)}g(x)\dv(x)&\gs \int_{B_r(p)}\Delta f\dv=\int_{\partial B_r(p)}\frac{\partial f}{\partial r}\\
& =\frac{\partial}{\partial r}\int_{\partial B_r(p)}f-\int_{\partial B_r(p)}Hf,
\end{split}
\end{equation}
where $0<r<R$, and $H$ is the mean curvature of $\partial B_r(p)$ with resect to $\partial/\partial r$. The standard comparison theorem  asserts that
$$H(x)\ls (n-1)\cot_K(r) =\frac{A_K'(r)}{A_K(r)},\qquad \forall\ x\in \partial B_r(p).$$
Therefore, it follows from (\ref{equ3.2}) and the assumption $f\gs0$ that
$$\frac{\int_{B_r(p)}g(x) \dv(x)}{A_K(r)}\gs  \frac{\partial}{\partial r}\frac{\int_{\partial B_r(p)}f}{A_K(r)}.$$
Notice that $\lim_{r\to0}\frac{\int_{\partial B_r(p)}f}{A_K(r)}=f(p)$. Integrating both sides of the above inequality with respect to $r$ over $(0,R)$, we conclude that (\ref{equ3.1}) holds.
\end{proof}
Now we consider the case that $f$ needs not to be nonnegative.
\begin{cor}\label{cor3.2}
Let $(M,g)$ be an $n$-dimensional   Riemannian manifold with $Ric_M\gs- K$ for some $K\in\mathbb R$. Suppose that $f$ is a Lipschitz  function on an open subset $\Omega\subset M$, and $\Delta f\ls g\in L^1_{\rm loc}(\Omega)$ in the sense of distributions. Then for any $p\in \Omega$ and   $R>0$ with $B_R(p)\subset\subset\Omega$,
\begin{equation}\label{equ3.3}
\begin{split}
\frac{1}{V_K(R)}\int_{B_R(p)}\Big(f(x)-f(p)\Big)\dv(x)\ls & R\cdot Lip_{B_R(p)}f\cdot\Big(1-\frac{{\rm vol}\big(B_R(p)\big)}{V_K(R)}\Big)\\
& +\frac{1}{V_K(R)}\int_0^R A_K(r)\int_0^r\frac{\int_{B_s(p)}g(x) \dv(x)}{A_K(s)}{\rm d}s{\rm d}r.
\end{split}
\end{equation}
where $V_K(r)$ is the is the volume of a geodesic ball of radius $r$ in the space form of constant curvature $-K/(n-1),$ and
$$Lip_{B_R(p)}f:=\sup_{x,y\in B_R(p)}\frac{|f(x)-f(y)|}{|xy|}.$$

In particular, if $p$ is a Lebesgue point of $g$, then the following asymptotic mean value inequality holds
\begin{equation}\label{equ3.4}
\frac{1}{V_K(R)}\int_{B_R(p)}\Big(f(x)-f(p)\Big)\dv(x)\ls \frac{g(p)}{2(n+2)}\cdot R^2+o(R^2)\quad {\rm as}\ \ R\to0.
\end{equation}
\end{cor}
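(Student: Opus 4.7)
The plan is to reduce Corollary \ref{cor3.2} to Lemma \ref{lem3.1} by a shift. Set $L := \mathrm{Lip}_{B_R(p)} f$ and $\tilde f(x) := f(x) - f(p) + RL$. Since $|f(x)-f(p)| \leqslant RL$ on $B_R(p)$, $\tilde f$ is nonnegative there, it has the same Laplacian as $f$ in the sense of distributions, and $\tilde f(p) = RL$. Applying Lemma \ref{lem3.1} to $\tilde f$ on every concentric ball $B_r(p)$ with $0<r\leqslant R$ yields
\begin{equation*}
\frac{1}{A_K(r)}\int_{\partial B_r(p)}(f-f(p))\,d\sigma \;\leqslant\; RL\Bigl(1-\frac{\mathrm{area}(\partial B_r(p))}{A_K(r)}\Bigr) + \int_0^r \frac{\int_{B_s(p)}g\,dv_g}{A_K(s)}\,ds,
\end{equation*}
where the Bishop--Gromov comparison $\mathrm{area}(\partial B_r(p)) \leqslant A_K(r)$ keeps the first term on the right nonnegative.

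Next I multiply both sides by $A_K(r)$ and integrate in $r$ over $(0,R)$. The co-area formula converts the left-hand side into $\int_{B_R(p)}(f-f(p))\,dv_g$, and the first piece on the right becomes $RL\bigl(V_K(R)-\mathrm{vol}(B_R(p))\bigr)$. Dividing through by $V_K(R)$ produces exactly the inequality (\ref{equ3.3}). This is the main computation; no additional ingredient is needed beyond Lemma \ref{lem3.1} and the standard comparison $\mathrm{vol}(B_R(p))\leqslant V_K(R)$.

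For the asymptotic statement (\ref{equ3.4}) I would perform a Taylor/Bishop--Gromov expansion in $R$ as $R\to 0^+$. The Riemannian volume expansion $\mathrm{vol}(B_R(p)) = V_K(R)\bigl(1+O(R^2)\bigr)$ (coming from the fact that both model and ambient volumes agree with $\omega_n R^n/n$ up to $O(R^{n+2})$) makes the first summand $O(R^3) = o(R^2)$. For the second summand, Lebesgue-point assumption gives $\int_{B_s(p)} g\,dv_g = g(p)\,\mathrm{vol}(B_s(p)) + o(s^n)$, and combined with $A_K(s)\sim n c_n s^{n-1}$, $V_K(R)\sim c_n R^n$ this evaluates to
\begin{equation*}
\frac{1}{V_K(R)}\int_0^R A_K(r)\int_0^r\frac{\int_{B_s(p)}g\,dv_g}{A_K(s)}\,ds\,dr \;=\; \frac{g(p)}{2(n+2)}R^2 + o(R^2),
\end{equation*}
which is the target bound.

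The only subtlety I foresee is the asymptotic step: I must quantify both the $O(R^2)$ defect in Bishop--Gromov (to kill the $RL$ factor, which does not vanish) and the Lebesgue-point convergence uniformly enough through two nested integrations to extract the sharp constant $1/(2(n+2))$. This is routine but demands care with the Fubini interchange and with checking that the $o(s^n)$ error, after being divided by $A_K(s)\sim s^{n-1}$ and integrated twice against $A_K$, still contributes only $o(R^2)$ after division by $V_K(R)\sim R^n$.
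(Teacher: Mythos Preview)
Your proof is correct and follows essentially the same approach as the paper's: shift $f$ by a constant to make it nonnegative, apply Lemma~\ref{lem3.1} on each sphere $\partial B_r(p)$, then multiply by $A_K(r)$ and integrate in $r$. The only cosmetic difference is that the paper shifts by the $r$-dependent constant $-\inf_{B_r(p)}(f-f(p))$ and then bounds this above by $R\cdot\mathrm{Lip}_{B_R(p)}f$, whereas you shift by the fixed constant $R\cdot\mathrm{Lip}_{B_R(p)}f$ from the outset; both routes produce the identical spherical inequality and hence \eqref{equ3.3}, and the asymptotic analysis for \eqref{equ3.4} is the same in both.
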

\begin{proof}
We consider the function $h(x):=f(x)-f(p)$. By applying   Lemma  \ref{lem3.1} to nonnegative function
$$h_r(x):=h(x)-\inf_{y\in B_r(p)}h(y)$$
on $B_r(p)$, $0<r<R$, we have
$$\frac{1}{A_K(r)}\int_{\partial B_r(p)}h_r\ls h_r(p)+ G_K(r)= -\inf_{y\in B_r(p)}h(y)+G_K(r),$$
where
$$ G_K(r):=\int_0^r\frac{\int_{B_s(p)}g(x) \dv(x)}{A_K(s)}ds.$$
Denote by $A(r):={\rm vol}_{n-1}(\partial B_r(P)\subset M)$. We get
\begin{equation*}
\int_{\partial B_r(p)}h \ls   -\inf_{y\in B_r(p)}h(y)\cdot \Big(A_K(r)-A(r)\Big)+G_K(r)\cdot A_K(r).
\end{equation*}
Remark that $h(p)=0$ and $Lip_{B_r(p)}h\ls Lip_{B_R(p)}h=Lip_{B_R(p)}f$, so we have
$$-\inf_{y\in B_r(p)}h(y)\ls-\inf_{y\in B_R(p)}h(y)\ls R\cdot Lip_{B_R(p)}f.$$
 The above two inequalities implies
\begin{equation*}
\int_{\partial B_r(p)}h \ls   R\cdot Lip_{B_R(p)}f\cdot \Big(A_K(r)-A(r)\Big)+G_K(r)\cdot A_K(r).
\end{equation*}
where we have used the Bishop inequality $A(r)\ls A_K(r)$ for all $r\in(0,R).$  Integrating both sides of the above inequality with respect to $r$ over $(0,R)$,  and then dividing by $V_K(R)$, we get (\ref{equ3.3}).

Suppose  that   $p$ is a Lebesgue point of $g$, i.e.,
\begin{equation}
\lim_{R\to0}\fint_{B_R(p)}g(x)\dv(x)=g(p).
\end{equation}
Notice that
\begin{equation}
\frac{{\rm vol}\big(B_R(p)\big)}{V_K(R)}=1+O(R^2)\quad {\rm as}\ \ R\to0.
\end{equation}
It follows that
$$ R\cdot Lip_{B_R(p)}f\cdot\Big(1-\frac{{\rm vol}\big(B_R(p)\big)}{V_K(R)}\Big)=O(R^3)\quad {\rm as}\ \ R\to0$$
and that
$$ \frac{1}{V_K(R)}\int_{B_R(p)}g(x)\dv(x)=g(p)+o(1)\quad {\rm as}\ \ R\to0.$$
Thus, by a direct calculation (noticing that $A_K(t)=\omega_{n-1}\cdot t^{n-1}+O(t^n)$ as $t\to0$), we get
\begin{equation*}
\begin{split}
\frac{1}{V_K(R)}\int_0^R &A_K(r) \int_0^r\frac{\int_{B_s(p)}g(x) \dv(x)}{A_K(s)}{\rm d}s{\rm d}r\\
=&\frac{1}{V_K(R)}\int_0^R A_K(r)\int_0^r\frac{V_K(s)\cdot\big(g(p)+o(1)\big)}{A_K(s)}{\rm d}s{\rm d}r\\
=&\frac{1}{V_K(R)}\int_0^R A_K(r)\int_0^r\frac{s}{n}\big(1 +o(1)\big)\big(g(p) +o(1)\big){\rm d}s{\rm d}r\\
=&\frac{g(p)}{2(n+2)}\cdot R^2+o(R^2)
\end{split}
\end{equation*}
as $R\to0$. The proof is finished.
\end{proof}

At last in this section, we want to use the above asymptotic mean value inequality to harmonic maps to a metric space with curvature bounded above.
\begin{lem}\label{lem3.3}
 Let $\Omega$ be a bounded domain in an $n$-dimensional smooth Riemannian manifold $(M,g)$ and that $(X,d_X)$ is a  $CAT(1)$-space. Suppose that $u$ is a harmonic map  from $\Omega$ to $X$.  Given any $a,b\in\mathbb R$ with $a,b\gs0$, and any $q\in \Omega$, $Q\in X$, we put
  \begin{equation}\label{equ3.7}
 w_{a,b,Q,q}(x):=a\cdot d^2_X\big(u(x),u(q)\big)+b\cdot \cos\big(d_X(u(x),Q)\big).
 \end{equation}

  Then there exists a  sequence   $\{\epsilon_j\}_{j\in\mathbb N}$ with $\epsilon_j\to 0$ as $j\to\infty$ and a subset $\mathscr N\subset \Omega$ with zero measure such that
\begin{equation}\label{equ3.8}
\begin{split}
\int_{B_{\epsilon_j}(0)}&\Big[w_{a,b,P,x_0}\big(\exp_{x_0}(\eta)\big)- w_{a,b,P,x_0}(x_0) \Big]{\rm d}\eta\\
&\ls  \Big(2a-b\cdot\cos\big(  d_X (u(x_0),P )\big)\Big)   \frac{  \omega_{n-1}}{2n(n+2)}\cdot e_u(x_0)\cdot \epsilon_j^{n+2}+o( \epsilon_j^{n+2})
\end{split}
\end{equation}
 for any $x_0\in \Omega\backslash\mathscr N$  and for any  $P\in X$ such that   the image $u(\Omega)$ is contained in a ball $B_\rho(P)\subset X$ with radius $\rho<\frac{\pi}{2}$, and for every  $a,b \gs0$.
\end{lem}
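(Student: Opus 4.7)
The plan is to decompose $w_{a,b,P,x_0}$ into its two pieces
\[
w_1(x) := a\, d_X^2\bigl(u(x), u(x_0)\bigr), \qquad w_2(x) := b\cos\bigl(d_X(u(x), P)\bigr),
\]
and establish an asymptotic mean value inequality for each separately. For the quadratic piece $w_1$ I would bypass any PDE comparison and simply unwind the definition of the approximating energy density: since
\[
\int_{B_\epsilon(x_0)} d_X^2\bigl(u(x_0), u(y)\bigr)\, dv_g(y) = \frac{\omega_{n-1}}{n(n+2)}\, \epsilon^{n+2}\, e^u_\epsilon(x_0),
\]
and since $e^u_\epsilon \to e_u$ in $L^1_{\mathrm{loc}}(\Omega)$ as $\epsilon \to 0$ by Korevaar-Schoen, extract a subsequence $\epsilon_j \to 0$ along which $e^u_{\epsilon_j}(x_0) \to e_u(x_0)$ at almost every $x_0 \in \Omega$. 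Transferring the Riemannian ball integral to a Euclidean one in $T_{x_0}M$ through $\exp_{x_0}$ introduces a Jacobian $1 + O(|\eta|^2)$; since $d_X^2(u(x_0), u(\exp_{x_0}\eta)) = O(|\eta|^2)$ by the local Lipschitz continuity of $u$ (Theorem \ref{thm2.4}), the resulting correction is $O(\epsilon_j^{n+4})$, and one obtains the exact asymptotic
\[
a\!\int_{B_{\epsilon_j}(0)} d_X^2\bigl(u(x_0), u(\exp_{x_0}\eta)\bigr)\, d\eta = 2a \cdot \frac{\omega_{n-1}}{2n(n+2)}\, e_u(x_0)\, \epsilon_j^{n+2} + o(\epsilon_j^{n+2}).
\]

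For the cosine piece $w_2$ I would apply Corollary \ref{cor3.2} to the function $f := b\cos(d_X(u(\cdot), P))$. Local Lipschitz regularity of $u$ from Theorem \ref{thm2.4} (where $\rho < \pi/2$ is essential) makes $f$ Lipschitz on a neighborhood of $x_0$, and Lemma \ref{lem2.5} with $\kappa = 1$ gives $\Delta f \leq g$ distributionally with $g := -b\cos(d_X(u(\cdot), P))\, e_u \in L^1_{\mathrm{loc}}(\Omega)$. Any Lebesgue point of $e_u$ is automatically a Lebesgue point of $g$ since $\cos(d_X(u(\cdot), P))$ is continuous in $x$. Corollary \ref{cor3.2} then yields
\[
\frac{1}{V_K(R)} \int_{B_R(x_0)} (f - f(x_0))\, dv_g \leq \frac{g(x_0)}{2(n+2)} R^2 + o(R^2),
\]
and expanding $V_K(R) = \frac{\omega_{n-1}}{n} R^n + O(R^{n+2})$, together with the $\exp_{x_0}$-Jacobian correction (lower order because $f$ is Lipschitz, producing an error $O(R^{n+3})$), gives the upper bound for $\int_{B_R(0)}(w_2(\exp_{x_0}\eta) - w_2(x_0))\, d\eta$ with leading coefficient $-b\cos(d_X(u(x_0), P))\, e_u(x_0) \cdot \frac{\omega_{n-1}}{2n(n+2)}$.

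Summing the two contributions along $R = \epsilon_j$ reproduces exactly the claimed inequality. I would take $\mathscr{N}$ to be the union of the complement of the Lebesgue set of $e_u$ and the set where the chosen subsequence $e^u_{\epsilon_j}$ fails to converge pointwise to $e_u$; crucially, neither ingredient depends on $P$, $a$, or $b$, so a single null set and a single sequence $\{\epsilon_j\}$ work uniformly for all admissible parameters. I do not anticipate any serious obstacle: the content is essentially to match (i) Korevaar-Schoen's $L^1$-convergence of the approximating energy density with (ii) the cosine subharmonicity of Lemma \ref{lem2.5} channelled through the mean value inequality of Corollary \ref{cor3.2}. The only real bookkeeping is to verify that all geometric Jacobian corrections are genuinely $o(\epsilon_j^{n+2})$, which holds precisely because $u$ (and hence $w_1, w_2$) is Lipschitz on a fixed neighborhood of $x_0$.
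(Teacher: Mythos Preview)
Your proposal is correct and follows essentially the same route as the paper: split $w_{a,b,P,x_0}$ into the quadratic piece handled via the $L^1_{\rm loc}$-convergence of the approximating energy density (extracting a subsequence with a.e.\ pointwise convergence), and the cosine piece handled by feeding Lemma \ref{lem2.5} into Corollary \ref{cor3.2}, then pass from Riemannian balls to tangent-space balls via the Jacobian of $\exp_{x_0}$. Your observation that $\mathscr N$ and $\{\epsilon_j\}$ are independent of $a,b,P$ (since only Lebesgue points of $e_u$ and pointwise convergence of $e^u_{\epsilon_j}$ are used) is exactly the point the paper relies on implicitly.
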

\begin{proof}
Recall  that $e^u_{2,\epsilon}\to e_u$ in $L^1_{\rm loc}(\Omega)$ as $\epsilon\to0$. Therefore,    there exists a sequence $\{\epsilon_j\}_{j\in\mathbb N}$ with $\epsilon_j\to 0$ as $j\to\infty$ such that
$$\lim_{j\to\infty}e^u_{2,\epsilon_j}(x_0)=e_u(x_0)\quad {\rm for\ almost\ all}\ x_0\in\Omega.$$
By the definition of the approximating energy density, it follows   that
$$\int_{B_{\epsilon_j}(x_0)}d^2_X\big(u(x),u(x_0)\big)\dv(x)=\frac{\omega_{n-1}}{n(n+2)}\cdot e_u(x_0)\cdot \epsilon_j^{n+2}+o(\epsilon_j^{n+2})$$
for almost all point $x_0\in \Omega$.
On the other hand, we have by Lemma \ref{lem2.5} and Corollary \ref{cor3.2}  that
 \begin{equation*}
\begin{split}
\int_{B_{\epsilon_j}(x_0)} \Big[&\cos\big(  d_X\big(u(x),P\big)\big)-\cos\big(d_X\big(u(x_0),P\big)\big)\Big]\dv(x)\\
 & \ls V_K(\epsilon_j)\cdot \Big[-\frac{\cos\big(  d_X\big(u(x_0),P\big)\big) }{2(n+2)}\cdot e_u(x_0)\cdot \epsilon_j^{2}+o( \epsilon_j^{2})\Big]\\
& = -\frac{\cos\big(  d_X\big(u(x_0),P\big)\big)\cdot \omega_{n-1}}{2n(n+2)}\cdot e_u(x_0)\cdot \epsilon_j^{n+2}+o( \epsilon_j^{n+2})
\end{split}
\end{equation*}
for all Lebesgue points $x_0 $ of $e_u$, and for all $P\in X$ such that the image $u(\Omega)$ is contained in a ball $B_\rho(P)\subset X$ with radius $\rho<\frac{\pi}{2}$. Here we have used that $V_K(\epsilon_j)=\frac{\omega_{n-1}}{n}\cdot \epsilon_j^n+o(\epsilon_j^n).$ Thus, for almost all $x_0\in \Omega$, we have
 \begin{equation}\label{equ3.9}
\begin{split}
\int_{B_{\epsilon_j}(x_0)}&\Big[w_{a,b,P,x_0} (x)- w_{a,b,P,x_0}(x_0) \Big]\dv(x)\\
 &\ls \Big(2a-b\cdot\cos\big(  d_X (u(x_0),P )\big)\Big)   \frac{  \omega_{n-1}}{2n(n+2)}\cdot e_u(x_0)\cdot \epsilon_j^{n+2}+o( \epsilon_j^{n+2})
\end{split}
\end{equation}
and any  $P\in X$, and for every  $a,b\in \mathbb R$ with $b\gs0$.

At last, we consider the exponential map $\exp_{x_0}: B_{\epsilon_j}(0)\subset \mathbb R^n\to B_{\epsilon_j}(x_0)$. It is well known
$$\frac{\dv}{{\rm d}\eta}=1+o(\epsilon_j).$$
Thus, for any general Lipschitz function $h$, we have, as $r\to0$,
\begin{equation*}
\begin{split}
\int_{B_r(0)}&\Big(h(\exp_{x_0}(\eta))-h(x_0)\Big){\rm d}\eta\\
&\ls \int_{B_r(x_0)}\Big(h(x)-h(x_0)\Big)(1+o(r))\dv(x)\\
&\ls \int_{B_r(x_0)}\Big(h(x)-h(x_0)\Big)\dv(x)+o(r)\cdot \int_{B_r(x_0)}\Big|h(x)-h(x_0)\Big| \dv(x)\\
&\ls \int_{B_r(x_0)}\Big(h(x)-h(x_0)\Big)\dv(x)+o(r)\cdot O(r)\cdot O(r^{n})
\end{split}
\end{equation*}
 By using this to  $w_{a,b,P,x_0}$ and combining with (\ref{equ3.9}),  we obtain Eq. (\ref{equ3.8}).
\end{proof}

\section{The Bochner inequality for harmonic maps into $CAT(\kappa)$-spaces}

 Let $\Omega$ be a bounded domain in an $n$-dimensional smooth Riemannian $(M,g)$ with $Ric_{M}\gs- K$ for some   $K\gs0$,
and let $(X,d_X)$ be a complete $CAT(\kappa)$-space for some $\kappa>0$.

\emph{In this section,
we always assume that $u:\Omega\to X$ is a harmonic map with the image $Im(u)$ containing in a ball $B_\rho(Q_0)\subset Y$ with $\rho<\pi/(2\sqrt\kappa)$.
From Theorem \ref{thm2.4}, we know
that $u$ is local Lipschitz continuous on $\Omega$.}

\subsection{Auxiliary functions}$\  $

In this subsection, we will introduce a family of auxiliary functions.

Fix  $p\in(1,\infty)$ and a ball $B_R(o)$ such that $B_{2R}(o)\subset\subset \Omega$. Denote by $B_R:=B_R(o)$ and by
$$  \ell_0:=Lip_{B_{2R}}u=\sup_{x,y\in B_{2R} ,\ x\not=y}\frac{d_X(u(x),u(y))}{|xy|}    <\infty.$$
We  introduce a family of auxiliary functions   $f_t(x)$ on $B_R$ as follows: for any $t>0$, we define
\begin{equation}\label{equ4.1}
f_t(x):=\inf_{y\in B_{2R}}\Big\{\frac{|xy|^p}{pt^{p-1}}-F(x,y) \Big\},\qquad x\in B_R,
\end{equation}
where
$$F(x,y):=2\sin\frac{d_X\big(u(x),u(y)\big)}{2}+4 \sin^2\frac{d_X(u(x),u(y))}{2}.$$
It is clear that  $F(x,y)\ls 6$ and that  (by taking $y=x$)
\begin{equation}\label{equ4.2}
0\gs f_t(x)\gs-6.
\end{equation}

For any  $0<t<t_*(:= (R^p/6p)^{1/(p-1)})$, it is clear that the
 the ``inf"  of \eqref{equ4.1} can be achieved, i.e., for any $x\in B_R$,
$$S_t(x):=\Big\{y\in B_{2R}\ |\ f_t(x)=\frac{|xy|^p}{pt^{p-1}}-F(x,y) \Big\}\not= \varnothing.$$
Since $F(x,\cdot)$ is continuous on $B_{2R}$, it follows that $S_t(x)$ is close.
Fix any small $t\in (0,t_*)$. We define two functions on $B_R$
\begin{equation}\label{equ4.3}
L_t(x):=\min_{y\in S_t(x)}|xy|\qquad {\rm and}\qquad D_t(x):=\frac{L_t^p(x)}{pt^{p-1}}-f_t(x).
\end{equation}
We give some basic properties of these functions.
\begin{lem}\label{lem4.1}
 For any $t\in (0,t_*)$, we have the following properties.\\
(1)\quad $f_t$ is Lipschitz continuous on $B_R$;\\
(2)\quad Both $L_t$ and $D_t$ are lower semi-continuous on $B_R$;\\
(3)\quad There exists a constant $C=C( p, \ell_0,\kappa)>0$ such that, for any  $t\in (0,t_*)$,
$$L_t\ls Ct,\quad   D_t\ls C t\quad{\rm and}\quad-f_t\ls C  t\quad{\rm on}\ \ B_R.$$
\end{lem}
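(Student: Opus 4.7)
I would dispatch the three parts in the order (3), (1), (2), since the $O(t)$-estimates in (3) feed directly into both the Lipschitz argument and the compactness step.

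For (3), the starting point is the linear bound $F(x,y)\le 3\ell_0|xy|$ valid on $B_R\times B_{2R}$: from $\sin(s/2)\le 1$ one has $4\sin^2(s/2)\le 4\sin(s/2)\le 2s$, and $2\sin(s/2)\le s$, so $F\le 3d_X(u(x),u(y))\le 3\ell_0|xy|$. Setting $q:=p/(p-1)$, Young's inequality
$$
3\ell_0|xy|\le \frac{|xy|^p}{pt^{p-1}}+\frac{(3\ell_0)^q}{q}\,t
$$
immediately gives $f_t(x)\ge -(3\ell_0)^q t/q$, hence $-f_t\le C(p,\ell_0)\,t$. At a minimizer $y\in S_t(x)$, combining $f_t(x)\le 0$ (take $y=x$ in \eqref{equ4.1}) with $F(x,y)\le 3\ell_0|xy|$ yields $|xy|^p/(pt^{p-1})\le 3\ell_0|xy|$; a sharper Young estimate $3\ell_0|xy|\le |xy|^p/(2pt^{p-1})+C(p,\ell_0)\,t$ then forces $|xy|^p/(2pt^{p-1})\le C(p,\ell_0)\,t$, so $L_t(x)\le C(p,\ell_0)\,t$ and consequently $D_t(x)=L_t^p(x)/(pt^{p-1})-f_t(x)\le C(p,\ell_0)\,t$.

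For (1), $f_t$ is the pointwise infimum over $y\in B_{2R}$ of the family $\Phi_y(x):=|xy|^p/(pt^{p-1})-F(x,y)$. Each $\Phi_y$ is Lipschitz in $x$ on $B_R$ with a Lipschitz constant independent of $y$: the mean-value theorem for $r\mapsto r^p$ on $[0,3R]$ controls the first summand by $(3R)^{p-1}/t^{p-1}$, and the bound
$$
\Bigl|\tfrac{d}{ds}\bigl(2\sin(s/2)+4\sin^2(s/2)\bigr)\Bigr|=\cos(s/2)\bigl(1+4\sin(s/2)\bigr)\le 5,
$$
combined with $|d_X(u(x),u(y))-d_X(u(x'),u(y))|\le\ell_0|xx'|$, controls the second by $5\ell_0$. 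An infimum of a uniformly Lipschitz family is Lipschitz, so (1) follows.

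For (2), let $x_n\to x$ in $B_R$ and pick $y_n\in S_t(x_n)$ with $|x_ny_n|=L_t(x_n)$. By (3), $|x_ny_n|\le C(p,\ell_0)t$, so the $y_n$ lie in a compact subset strictly inside $B_{2R}$; along a subsequence $y_n\to y_\infty$. The continuity of $f_t$ from (1), together with that of $F$ and of $(x,y)\mapsto|xy|^p$, lets me pass to the limit in the identity $|x_ny_n|^p/(pt^{p-1})-F(x_n,y_n)=f_t(x_n)$ and conclude $y_\infty\in S_t(x)$. Hence $L_t(x)\le|xy_\infty|=\lim_n|x_ny_n|=\liminf_n L_t(x_n)$, proving $L_t$ is lsc; since $r\mapsto r^p$ is increasing on $[0,\infty)$, $L_t^p$ is lsc as well, and $D_t=L_t^p/(pt^{p-1})-f_t$ is lsc as the sum of an lsc and a continuous function. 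No step is deep; the only real content is the Young bookkeeping in (3), which delivers the sharp $O(t)$ rates that will later make the limit identifications of $f_t/t$, $L_t/t$, $D_t/t$ with powers of $\mathrm{Lip}\,u$ in \eqref{equ1.11} meaningful.
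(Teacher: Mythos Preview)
Your proof is correct and follows essentially the same approach as the paper: the paper also proves (1) by viewing $f_t$ as an infimum of uniformly Lipschitz functions, asserts (2) from the closedness of $S_t(x)$ and continuity of $f_t$ (you give more detail via a subsequence argument), and for (3) derives $D_t(x)=F(x,y_t)\le C_1\,L_t(x)$ and combines this with $D_t\ge L_t^p/(pt^{p-1})$ to obtain $L_t\le C_2 t$. Your route in (3) via two applications of Young's inequality is a minor variant of the paper's direct algebra (and in fact your bound $F\le 3d_X(u(x),u(y))$ yields a constant independent of $\kappa$, slightly sharper than the paper's $(1+\pi/\sqrt\kappa)\,d_X$); either way the argument is elementary bookkeeping.
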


\begin{proof}
(1) Take any  $x_1,x_2\in B_R$. From the definition of $f_t$, we by choosing some $y_2\in S_t(x_2)$  have that
$$f_t(x_1)-f_t(x_2)\ls \frac{|x_1y_2|^p-|x_2y_2|^p}{pt^{p-1}}-\big(F(x_1,y_2)-F(x_2,y_2)\big).$$
Noticing that both $|\cdot\ \cdot\ |^p$ and $F(\cdot,\cdot)$ are Lipschitz, we conclude that there exists some constant $C_t>0$ such that $f_t(x_1)-f_t(x_2)\ls C_t|x_1x_2|.$ That is,
$f_t$ is Lipschitz continuous.

(2) From the definition of $L_t$, we know that $L_t$ is lower semi-continuous. By using that $f_t$ is continuous, we get that $D_t$ is also lower semi-continuous.

(3) We take some $y_t\in S_t(x)$ such that $L_t(x)=|xy_t|.$ We have
\begin{equation}\label{equ4.4}
\begin{split}
D_t(x)&=\frac{L^p_t(x)}{pt^{p-1}}-f_t(x) =F(x,y_t)\ls   d_X(u(x),u(y_t))+  d^2_X(u(x),u(y_t))\\
&\ls (1+\pi/\sqrt\kappa)\cdot  d_X(u(x),u(y_t)) \ls (1+\pi/\sqrt\kappa)\cdot \ell_0\cdot L_t(x) \\
& :=C_1(\ell_0,\kappa)L_t(x),
\end{split}
\end{equation}
where we have used
$$d_X(u(x),u(y_t))\ls d_X(u(x),Q_0)+d_X(Q_0,u(y_t))<\pi/\sqrt\kappa$$
Noticing that $f_t\ls0$, we get $D_t\gs\frac{L^p_t(x)}{pt^{p-1}}$. By combining with (\ref{equ4.4}), we have
$$L_t\ls ( p\cdot C_1)^{1/(p-1)}\cdot t:=C_2(p,\ell_0,\kappa)\cdot t.$$
By using  (\ref{equ4.4}) again, we get $D_t\ls  C_1C_2\cdot t.$ At last, $f_t=L^p_t/(pt^{p-1})-D_t\gs-D_t\gs -   C_1C_2\cdot t.$
The proof is finished.
\end{proof}

The main result in this subsection is the following elliptic inequality for $f_t$.
\begin{prop}\label{prop4.2} Assume that $\kappa =1$, $p\in[2,\infty)$ and   ${\rm diam}\big(u(B_{2R})\big)< \pi/2$, where ${\rm diam}\big(u(B_{2R})\big)$  is the diameter of the  $u(B_{2R})$.
   Given any $\varepsilon>0$,
we have that, for any  $t>0$  sufficiently small, the  inequality holds
\begin{equation}\label{equ4.5}
\Delta f_t(x) \ls \frac{K}{t^{p-1}}\cdot L^p_t(x)+ (1+\varepsilon)\cdot e_u(x) D_t(x),\quad {\rm on}\ \ B_R,
\end{equation}
 in the sense of distributions.
\end{prop}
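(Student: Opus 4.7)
The plan is to argue by contradiction via a doubling-of-variables technique, combining a second-variation estimate for the distance term, the $CAT(1)$ four-point inequality of Lemma~\ref{lem2.3}, and the asymptotic mean value inequalities of Lemma~\ref{lem3.3}. First I would suppose that (\ref{equ4.5}) fails; then by standard maximum principle arguments there exist $\theta_0 > 0$ and a smooth subdomain $U \subset\subset B_R$ such that the solution $v$ of the Dirichlet problem
\begin{equation*}
\Delta v = \frac{K}{t^{p-1}} L_t^p + (1+\varepsilon) e_u D_t + \theta_0 \text{ in } U, \qquad v = f_t \text{ on } \partial U
\end{equation*}
satisfies that $f_t - v$ attains a strict interior minimum at some $\bar x \in U$. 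The right-hand side is in $L^\infty(U)$ by Lemma~\ref{lem4.1}(3) and Corollary~\ref{cor2.6}, so $v \in C^{1,\alpha}$ and is $C^2$ at Lebesgue points of the RHS. Picking $\bar y \in S_t(\bar x)$ with $|\bar x \bar y| = L_t(\bar x)$, the two-point function
\begin{equation*}
H(x,y) := \frac{|xy|^p}{p t^{p-1}} - F(x,y) - v(x)
\end{equation*}
attains a minimum on $U \times B_{2R}$ at $(\bar x, \bar y)$, since $H(\bar x, \bar y) = f_t(\bar x) - v(\bar x)$ and $H \gs f_t - v$ everywhere.

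With the doubled variable in hand, I would write down the averaged comparison. Let $T \colon T_{\bar x} M \to T_{\bar y} M$ be parallel translation along the unique minimising geodesic from $\bar x$ to $\bar y$, which is well-defined because $L_t(\bar x) = O(t)$ is small by Lemma~\ref{lem4.1}(3). For every small $\epsilon > 0$ the minimality property gives
\begin{equation*}
0 \ls \fint_{B_\epsilon(0) \subset T_{\bar x} M} \bigl[ H\bigl(\exp_{\bar x}(\eta),\, \exp_{\bar y}(T\eta)\bigr) - H(\bar x, \bar y) \bigr] \, {\rm d}\eta.
\end{equation*}
Integrating the classical second variation of arc length using $Ric_M \gs -K$ bounds the distance contribution by a main term of order $K L_t^p(\bar x) \epsilon^2 / [(n+2) t^{p-1}]$ plus $o(\epsilon^2)$, generalising (\ref{equ1.12}) to $K>0$, while the $C^2$-smoothness of $v$ near $\bar x$ yields the classical expansion $\fint (v(\exp_{\bar x}(\eta)) - v(\bar x)) \, {\rm d}\eta = \Delta v(\bar x) \epsilon^2 /[2(n+2)] + o(\epsilon^2)$.

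The decisive step is the asymptotic mean value inequality for $-F$:
\begin{equation*}
-\fint_{B_\epsilon(0)} \bigl[ F\bigl(\exp_{\bar x}(\eta),\, \exp_{\bar y}(T\eta)\bigr) - F(\bar x, \bar y) \bigr] {\rm d}\eta \ls \frac{1+\varepsilon}{2(n+2)} e_u(\bar x) D_t(\bar x)\, \epsilon^2 + o(\epsilon^2).
\end{equation*}
I would derive it by invoking Lemma~\ref{lem2.3} with $Q := u(\bar x)$, $R := u(\bar y)$, $P := u(\exp_{\bar x}(\eta))$, $S := u(\exp_{\bar y}(T\eta))$, and $Q_m$ the midpoint of $u(\bar x)$, $u(\bar y)$. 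That lemma upper bounds $-F + F(\bar x, \bar y)$ pointwise by a splitting into a function depending only on $\exp_{\bar x}(\eta)$ and one depending only on $\exp_{\bar y}(T\eta)$, each of the form $w_{a,b,Q_m, \cdot}$ appearing in Lemma~\ref{lem3.3}. The specific choice $\phi(s) = 2\sin(s/2) + 4\sin^2(s/2)$ in (\ref{equ4.1}) is tailored precisely so that this splitting lands on terms to which Lemma~\ref{lem3.3} applies directly, and (\ref{equ4.3}) gives the identity $F(\bar x, \bar y) = D_t(\bar x)$. Choosing the free parameters $\alpha, \beta$ in Lemma~\ref{lem2.3} with $1 - \alpha$ and $|\beta - 1|$ of order $\varepsilon$ matches the leading coefficient of the mean value inequality from Lemma~\ref{lem3.3} against $(1 + \varepsilon) e_u(\bar x) D_t(\bar x)$.

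The main obstacle is that the asymptotic estimates supplied by Lemma~\ref{lem3.3} only hold outside an exceptional null set $\mathscr N$, while the minimum $(\bar x, \bar y)$ produced by the Dirichlet reduction need not avoid it. I would overcome this by a generalised Jensen's lemma from the theory of viscosity solutions of second order PDE: perturb $H$ to $H_1 := H + \gamma_\delta$ by a smooth $\gamma_\delta(x,y)$ with $\|\gamma_\delta\|_{C^2} \ls \delta$ so that the minima of $H_1$ fill a set of positive product measure in $U \times B_{2R}$, and then by Fubini pick a minimum point $(\bar x', \bar y')$ with $\bar x', \bar y' \notin \mathscr N$. All previous estimates carry over at $(\bar x', \bar y')$ with extra error $O(\delta \epsilon^2)$, and summing them together with the PDE for $v$ yields $-\theta_0 + O(\delta) \ls 0$, a contradiction for $\delta$ small. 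The hardest step will be the careful balancing of $\alpha, \beta$ in Lemma~\ref{lem2.3} so that the $\varepsilon$ is attached only to the $e_u D_t$ term and not to the $K L_t^p/t^{p-1}$ term, together with checking that the Jensen perturbation preserves the regularity required by the second-variation and $v$-asymptotics.
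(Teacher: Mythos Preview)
Your proposal follows the paper's architecture closely: contradiction plus Dirichlet reduction, the two-point function $H$, Jensen-type perturbation to land on a good set, and the averaged comparison split into the distance, $-F$, $v$, and perturbation pieces. Two points, however, are not right as stated and would block the argument.

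First, the choice of $\alpha,\beta$ in Lemma~\ref{lem2.3} is not ``$1-\alpha$ and $|\beta-1|$ of order $\varepsilon$'' as free parameters. Writing $l_0=2\sin\frac{d_{QR}}{2}$ and $l_1=2\cos\frac{d_{QR}}{2}$, the paper takes $\alpha=1/(1+2l_0)$, which is the \emph{unique} choice making the left side of (\ref{equ2.2}) equal to $\alpha l_0\cdot\bigl[F(x^o,y^o)-F(\exp_{x^o}\eta,\exp_{y^o}T\eta)\bigr]$; without this, the left side is not a multiple of the $F$-difference at all. Then $\beta$ is fixed by demanding $2a_2=b\cdot\cos d_{RQ_m}=l_1^2/2$, so that the $y^o$-contribution from Lemma~\ref{lem3.3} vanishes identically --- otherwise an uncontrolled $e_u(y^o)$ term survives. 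The $(1+\varepsilon)$ factor enters only \emph{after} these structural choices: since $l_0=O(t)$ by Lemma~\ref{lem4.1}(3), one takes $t$ (and $\delta$) small enough that $l_0\ls\varepsilon/6$, and then a direct computation gives $(a_1-l_1^2/4)/(\alpha l_0)\ls (1+\varepsilon)l_0/2$.

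Second, $v$ is not $C^2$: its Laplacian is only in $L^\infty$, so the ``classical expansion'' you invoke is unavailable. The paper instead applies the asymptotic mean value inequality of Corollary~\ref{cor3.2} at a Lebesgue point of $\Delta v$ (this is built into the set $\mathscr A$). Relatedly, the closing step is not ``$-\theta_0+O(\delta)\ls 0$ for $\delta$ small'' but a genuine limit $\delta\to0$: the perturbed minimum $(x^o,y^o)$ converges to $(\bar x,\bar y)$, and one needs the lower semi-continuity of $L_t$ and $D_t$ from Lemma~\ref{lem4.1}(2), together with the continuity of $|xy|$ and $d_X(u(x),u(y))$, to close the contradiction.
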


In order to prove this lemma, we need the following:
\begin{lem}[Perturbation Lemma]\label{lem4.3}
Let $U\subset M$ be a convex domain of $M$ and let  $h\in W^{1,2}(U )\cap C(U)$ satisfy $\Delta  h\ls \lambda$  on $U$ for some constant $\lambda\in\R$. Assume  that point $\hat x\in U$ is one of   minimum points of function $h$ on $U$. Assume a subset $A\subset U$ has full measure.

Then   for any $r,\delta>0$ sufficiently small, (they are smaller than a constant $\delta_0$ depending on the bounds of sectional curvature on $U$,)
 there exists a smooth function $\phi$ on a neighborhood of $\hat x$, $B_{r_0}(\hat x)$, such that
$h+\phi$ has a local minimum point in $B_{r}(\hat x)\cap A$ and that
$$|\phi|+|\nabla\phi|+|{\rm Hess}(\phi)|\ls \delta,\qquad \forall\ x\in B_{r_0}(\hat x).$$
\end{lem}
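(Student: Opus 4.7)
I would build $\phi$ as a quadratic-plus-linear perturbation in exponential normal coordinates at $\hat x$, and use a generalized Jensen's Lemma to show that the family of minimum points of $h+\phi$ (indexed by the linear coefficient) has positive Lebesgue measure, hence must intersect the full-measure set $A$. Concretely, fix $r_1\in(0,r)$ smaller than the injectivity radius at $\hat x$ and a small parameter $\epsilon_1>0$, and set $h_1(x):=h(x)+\epsilon_1 d^2(x,\hat x)$ on $\overline{B_{r_1}(\hat x)}$. Since $\Delta d^2(\cdot,\hat x)=2n+O(r_1^2)$ in normal coordinates, $h_1$ still satisfies $\Delta h_1\ls M:=\lambda+Cn\epsilon_1$ distributionally, while the gap $h_1(x)-h_1(\hat x)\gs\epsilon_1 d^2(x,\hat x)$ promotes $\hat x$ to a \emph{strict} minimum of $h_1$. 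For $p\in B_{\delta_1}(0)\subset T_{\hat x}M$ put $h_p(x):=h_1(x)-\langle p,\exp^{-1}_{\hat x}(x)\rangle$; the choice $\delta_1<\epsilon_1 r_1/2$ makes the linear term strictly smaller than the gap on $\partial B_{r_1}(\hat x)$, so $h_p$ attains its minimum on $\overline{B_{r_1}(\hat x)}$ at an interior point. By a measurable selection take $\Phi(p)=x_p\in B_{r_1}(\hat x)$.

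\textbf{Jensen estimate.} The heart of the proof is the measure lower bound
$$\bigl|\Phi(B_{\delta_1}(0))\bigr|\gs c(n,M)\cdot\delta_1^n.$$
To establish it, mollify $h_1$ to smooth $h_1^{\eta}$ preserving $\Delta h_1^{\eta}\ls M$. At any interior minimizer $x_p^\eta$ of $h_1^{\eta}-\langle p,\cdot\rangle$, the first-order condition $\nabla h_1^{\eta}(x_p^\eta)=p$ and the necessary second-order condition ${\rm Hess}\,h_1^{\eta}(x_p^\eta)\gs 0$ combine with the one-sided trace bound ${\rm tr}\,{\rm Hess}\,h_1^{\eta}\ls M$ to yield the spectral bound $\|{\rm Hess}\,h_1^{\eta}(x_p^\eta)\|\ls M$. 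The area formula applied to the gradient map $x\mapsto\nabla h_1^{\eta}(x)$ on the set of minimizers then gives $|\Phi^\eta(B_{\delta_1}(0))|\gs c\delta_1^n/M^n$ uniformly in $\eta$, and passing $\eta\to 0$ transfers the bound to $\Phi$.

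\textbf{Conclusion.} Since $A$ has full measure, we can choose $p_0\in B_{\delta_1}(0)$ with $x_{p_0}\in A$. Set
$$\phi(x):=\epsilon_1 d^2(x,\hat x)-\bigl\langle p_0,\,\exp^{-1}_{\hat x}(x)\bigr\rangle,$$
smooth on a normal ball $B_{r_0}(\hat x)$ of radius less than the injectivity radius (and with $r_0\gs r_1$ so that $x_{p_0}$ lies in its domain). Then $h+\phi=h_{p_0}$ has a local minimum at $x_{p_0}\in A\cap B_{r_1}(\hat x)\subset A\cap B_r(\hat x)$. A direct calculation in normal coordinates bounds $|\phi|+|\nabla\phi|+|{\rm Hess}\,\phi|\ls C\bigl(\epsilon_1(1+r_0+r_0^2)+\delta_1(1+r_0)\bigr)$ on $B_{r_0}(\hat x)$, with $C$ depending only on the sectional curvature of $U$. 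Choosing $\epsilon_1,\delta_1,r_0,r_1$ all sufficiently small (in terms of $\delta$, $\lambda$ and the curvature of $U$) enforces the desired bound $\le\delta$.

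The main obstacle is the measure estimate in the Jensen step: the classical Jensen Lemma from viscosity theory requires semi-convexity, which is \emph{not} implied by $\Delta h\ls\lambda$ together with $h\in W^{1,2}\cap C$. The saving observation is that at a local minimum of the smooth approximant ${\rm Hess}\,h_1^{\eta}$ is automatically positive semidefinite, so the one-sided \emph{trace} bound upgrades to a two-sided \emph{spectral} bound precisely at the points that contribute to $\Phi^\eta(B_{\delta_1}(0))$. Making this rigorous via a careful mollification-and-limit argument, while preserving the uniform lower bound on the measure of the minimizer set as $\eta\to 0$, is the technical core of the lemma.
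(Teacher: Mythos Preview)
Your overall strategy coincides with the paper's: add a small quadratic bump $\epsilon_1 d^2(\cdot,\hat x)$ to force a strict minimum, then run a Jensen-type argument for the family of linear perturbations and intersect the resulting positive-measure set of minimizers with $A$. The structure and the final choice of $\phi$ are exactly what the paper does in its Appendix.

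The genuine gap is in the smoothing step. You write ``mollify $h_1$ to smooth $h_1^{\eta}$ preserving $\Delta h_1^{\eta}\ls M$,'' but on a Riemannian manifold Euclidean convolution in normal coordinates does \emph{not} commute with the Laplace--Beltrami operator: the commutator $\Delta(h*\rho_\eta)-(\Delta h)*\rho_\eta$ contains the term $(g^{ij}-\delta^{ij})\partial_i\partial_j h$, and the second derivatives of $h$ exist only as distributions under the hypotheses $h\in W^{1,2}\cap C$, $\Delta h\ls\lambda$. So there is no mechanism by which a convolution mollifier inherits the one-sided bound $\Delta h_1^\eta\ls M$ that your spectral/area-formula argument needs at the minimizers. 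Your closing paragraph correctly flags this as ``the technical core,'' but the mollification route as stated cannot close it.

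The paper resolves exactly this point by replacing mollification with the Neumann heat flow $P_t h$ on the convex domain $U$: since $P_t$ is positivity-preserving and commutes with its generator, one has $\Delta P_t h = P_t(\Delta h)\ls P_t(\lambda)=\lambda$ for every $t>0$, giving smooth approximants with the Laplacian upper bound intact. Uniform convergence $P_t h\to h$ on compact subsets then transfers the measure lower bound for the set of minimizers to the limit. This is also why the convexity of $U$ appears in the statement of the lemma --- it is used for the Neumann heat flow, not for the Jensen step itself. If you swap your mollification for the heat-flow approximation, the rest of your argument goes through and matches the paper's proof.
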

\begin{proof}This comes from a slight extension of the classical  Jensen's lemma \cite[Lemma A.3]{c-i-lions92}. We will give the details of the proof in the Appendix A.
\end{proof}

\begin{proof}[Proof of Proposition \ref{prop4.2}] Denote  $\rho_0:={\rm diam}\big(u(B_{2R})\big)$.
It suffices to prove the following claim.\\
\textbf{Claim:} \emph{There exists $\bar t=\bar t(p,\varepsilon,t_*,\pi/2-\rho_0)$ such that for each $t\in(0,\bar t)$, the function $f_{t}(\cdot)$ satisfies
\begin{equation*}
\Delta  f_t(x) \ls  \frac{K}{t^{p-1}}\cdot L^p_t(x)+(1+\varepsilon)e_u(x) D_t(x)+\theta  \quad   on \ \  B_R
\end{equation*}
for any $\theta\in(0,1)$, in the sense of distributions.}

We shall prove this \textbf{Claim} by a contradiction argument.
 Suppose that the \textbf{Claim} fails for some sufficiently small $t\in (0,t_*)$ and some  $\theta_0\in(0,1)$.  According to the maximum principle,
 there exists a domain  $U\subset\subset B_R$
 such that the function $f_{t}(\cdot)-v(\cdot)$ satisfies
$$\min_{x\in U}\big(f_{t}(x)-v(x)\big)<0=\min_{x\in\partial U}\big(f_{t}(x)-v(x)\big), $$
where $v$ is the (unique) solution of the Dirichlet problem
\begin{equation*}
\begin{cases}
\Delta v&= \frac{K}{t^{p-1}}\cdot L^p_t(x)+(1+\varepsilon)e_u(x) D_t(x)+\theta_0 \quad  {\rm in}\ \ U\\
v&=f_{t} \quad {\rm on}\ \ \partial U.
\end{cases}
\end{equation*}
This means that $f_{t}(\cdot)-v(\cdot)$ has a \emph{strict minimum} in the interior of $U$.

Let us define a function $H(x,y)$ on $U\times B_{2R}$, by
$$H(x,y):=\frac{|xy|^p}{pt^{p-1}}- F\big(u(x),u(y)\big)-v(x).$$
Let $\bar x\in U$ be a minimum of $f_{t}(\cdot)-v$ on $U$, and let $\bar y:=y_t(\bar x)\in S_t(\bar x)$ such that $L_t(\bar x)=|\bar x\bar y|$.
By the definition of $f_t$, we conclude that $H(x,y)$ has a minimum at $(\bar x,\bar y)$.

Let  $\mathscr A\subset U\times B_{2R}$ be the set of all points $(x^o,y^o)\in U\times B_{2R}$ satisfying the following two properties:\\
\indent 1) $x^o\not=y^o$, and $x^o\not \in \mathscr N$, $y^o\not \in \mathscr N$, where  the set $\mathscr N$ is given as in Lemma \ref{lem3.3};  \\
\indent 2) point $x^o$ is a Lebesgue point of $\frac{K}{t^{p-1}}\cdot L^p_t(x)+(1+\varepsilon)e_u(x) D_t(x)$.\\
It is clear that $(U\times B_{2R})\backslash \mathscr A$ has zero measure.

Noting that the function $f(t)=2\sin t+4 \sin^2t$ satisfies $f'(t)>0,f''(t)>0$ for $t<\pi/6$ and (seeing the proof of Corollary \ref{cor2.6})
$$\Delta^{(2)}d_X(u(x),u(y))\gs0,$$
we have $\Delta^{(2)}F(x,y)\gs0$, where $\Delta^{(2)}$ is the Laplace-Beltrami operator on the product manifold $M\times M$. The Laplacian comparison theorem on product space $M\times M$ implies that
$\Delta^{(2)}(|xy|^2)\ls C_{n,K,{\rm diam}(U)}$ for some constant $C_{n,K,{\rm diam}(U)}>0$. By using the assumption   $p\gs 2$, we obtain $\Delta^{(2)}(|xy|^p)\ls C_{n,K,p,{\rm diam}(U)}$.
 Then, by Lemma \ref{lem4.3}, we conclude that,  for any sufficiently small $\delta>0$,  there exists a smooth  function
$\gamma_\delta(x,y)$ such that $|\gamma_\delta|+|\nabla \gamma_\delta|+|{\rm Hess}\gamma_\delta|\ls \delta$ and  that the function
$$H_1(x,y)=H(x,y)+\gamma_\delta(x,y)=\frac{|xy|^p}{pt^{p-1}}- F\big(u(x),u(y)\big)-v(x)+\gamma_\delta(x,y)$$
has a minimal point $(x^o,y^o)\in \mathscr A$ with
\begin{equation}\label{equ4.6}
d^2_{X\times X}\big((\bar x,\bar y),(x^o,y^o)\big)=|\bar x x^o|^2+|\bar yy^o|^2<\delta^2.
\end{equation}

Let $\sigma:[0,|x^oy^o|]\to M$ be a shortest geodesic with $\sigma(0)=x^o$ and $\sigma(|x^oy^o|)=y^o$ and let $T_{\sigma(t)}: T_{x^o}M\to T_{\sigma(t)}M$ be the parallel transport along  $\sigma(t)$.  Denote by $T:=T_{y^o}$. We want to  consider the asymptotic behavior of
\begin{equation}\label{equ4.7}
\begin{split}
I(\varepsilon_j):&=\!\!\int_{B_{\varepsilon_j}(0)\subset T_{x^o}M=\mathbb R^n}\Big[H_1\big(\exp_{x^o}(\eta),\exp_{y^o}(T\eta)\big)-H_1(x^o,y^o)\Big]{\rm d}\eta\\
&=I_1(\varepsilon_j)-I_2(\varepsilon_j)-I_3(\varepsilon_j)+I_4(\varepsilon_j),
\end{split}
\end{equation}
where the sequence $\{\epsilon_j\}$ is given in Lemma \ref{lem3.3} and
\begin{equation*}
\begin{split}
I_1(\varepsilon_j)&:=\frac{1}{pt^{p-1}}\cdot
\int_{B_{\varepsilon_j}(0)}\Big(|\exp_{x^o}( \eta)\ \exp_{y^o}( T\eta)|^p-|x^oy^o|^p\Big){\rm d}\eta,\\
I_2(\varepsilon_j)&:=
\int_{B_{\varepsilon_j}(0)}\!\Big(F\big(\exp_{x^o}( \eta),\exp_{y^o}(T\eta)\big)\!-\!F\big(x^o,y^o\big)\!\Big){\rm d}\eta,\\
I_3(\varepsilon_j)&:=
\int_{B_{\varepsilon_j}(0)}\Big(v(\exp_{x^o}( \eta))-v(x^o)\Big)dH^n(\eta),\\
I_4(\varepsilon_j)&:=
\int_{B_{\varepsilon_j}(0)}\Big( \gamma_\delta\big(\exp_{x^o}(\eta),\exp_{y^o}(T \eta))\big)-\gamma_\delta(x^o,y^o)\Big){\rm d}\eta.
 \end{split}
 \end{equation*}
The minimal property of point $(x^o,y^o)$ implies that
\begin{equation}\label{equ4.8}
 I(\varepsilon_j)\gs0.
\end{equation}
We need to estimate $I_1,I_2,I_3$ and $I_4$. \\

\noindent (i) \emph{The estimate of} $I_1$ and $I_4$.

Let $T$ be the parallel transportation,  the first and the second variation of arc-length implies that
\begin{equation*}
\begin{split}
|\exp_{x^o}(\eta)\ \exp_{y^o}(T\eta)|-|x^oy^o|\ls \frac{\epsilon^2}{2}\int_0^{|x^oy^o|} -R\big(\sigma'(t),T_{\gamma(t)}\eta, \sigma'(t),T_{\gamma(t)}\eta\big){\rm d}t+o(\epsilon^2)\\
\end{split}
\end{equation*}
for all $\eta\in B_\epsilon(0)$, where $R(\cdot,\cdot,\cdot,\cdot)$ is the Riemannian curvature tensor.  By taking $\epsilon=\varepsilon_j$ and using the fact that $a\ls b+\delta$ implies $a^p\ls b^p+p\delta b^{p-1}+o(\delta)$ as $\delta\to0$, and then integrating over $B_{\varepsilon_j}(0)$ we obtain
\begin{equation}\label{equ4.9}
  \begin{split}
I_1(\varepsilon_j)\ls\  \frac{1}{pt^{p-1}}\cdot\frac{pK\cdot \omega_{n-1}}{2n(n+2)} \cdot|x^oy^o|^p\cdot\varepsilon_j^{n+2} +\ o(\varepsilon_j^{n+2})
\end{split}
\end{equation}
for any $j\in \mathbb N$, where we have used $Ric(\sigma',\sigma')\gs -K$ and the Fatou's Lemma (since  $|\exp_{x^o}(\eta)\ \exp_{y^o}(T\eta)|-|x^oy^o|\ls C\epsilon^2$ for some constant $C$ depending on the sectional curvature on $B_{10|x^oy^o|}(x^o)$). 

Since $\gamma_\delta$ is smooth and that $|{\rm Hess}\gamma_\delta|\ls \delta$, it is easy to check that
\begin{equation}\label{equ4.10}
I_4(\varepsilon_j)  \ls C(n)\cdot \delta\cdot\varepsilon_j^{n+2}+ o(\varepsilon_j^{n+2}),
\end{equation}
for any $j\in \mathbb N$, and for some constant $C(n)>0$.

\noindent (ii) \emph{The estimate of} $I_2$.

We put
$$P=u\big(\exp_{x^o}(\eta)\big), \quad Q=u(x^o), \quad W=u(y^o) \quad {\rm and}\quad S=u\big(\exp_{y^o}(T\eta)\big),$$
and
\begin{equation}\label{equ4.11}
l_0:=2\sin\frac{d_{QW}}{2},\qquad   l_1:=2\cos\frac{d_{QW}}{2},\qquad \alpha=\frac{1}{1+2 l_0}\in (0,1).
\end{equation}
  Denote the midpoint of $Q,W$ by $Q_m$. Note that $\frac{1-\alpha}{2}=\alpha\cdot l_0$.  Then, by Lemma \ref{lem2.3} we have that for any $\beta>0$
\begin{equation}\label{equ4.12}
\begin{split}
\alpha l_0\cdot&  \Big(F(x^o,y^o)-F(\exp_{x^o}(\eta), \exp_{y^o}(T\eta))\Big) \\
 &=\alpha l_0\Big((2\sin \frac{d_{QR}}{2})^2-(2\sin \frac{d_{PS}}{2})^2\Big)+\alpha l_0\Big(2\sin \frac{d_{QR}}{2}-2\sin \frac{d_{PS}}{2}\Big) \\
&\ls\Big[1-\frac{1-\alpha}{2}(1-\frac{1}{\beta}) \Big](2\sin\frac{d_{PQ}}{2})^2+l_1\cdot\Big(\cos d_{PQ_m}-\cos d_{QQ_m}\Big)\\
 &\quad  +\Big[1-\frac{1-\alpha}{2}(1- \beta ) \Big](2\sin\frac{d_{RS}}{2})^2+l_1\cdot\Big(\cos d_{SQ_m}-\cos d_{RQ_m}\Big)\\
 &\ls\big[  w_{a_1,b,Q_m,x^o}(\exp_{x^o}(\eta))- w_{a_1,b,Q_m,x^o}(x^o)\big]\\
 &\quad+ \big[  w_{a_2,b,Q_m,y^o}(\exp_{y^o}(T\eta))- w_{a_2,b,Q_m,y^o}(y^o)\big],
 \end{split}
 \end{equation}
 where the function  $w_{a,b,Q_m,x^o}$ is given in Lemma \ref{lem3.3} with the constants
\begin{equation}\label{equ4.13}
a_1:= 1-\frac{1-\alpha}{2}(1-\frac{1}{\beta}),\quad b:=l_1,\quad a_2:= 1-\frac{1-\alpha}{2}(1- \beta ),
\end{equation}
 and we have used $2\sin(t/2)\ls t$ for any $t\in(0,\pi)$. From $\rho_0={\rm diam}(u(B_{2R}))$ we have
 $u(B_{2R})\subset \overline{B_{\rho_0}(u(x_0))}\cap \overline{B_{\rho_0}(u(y_0))}$. By the  assumption $\rho_0<\pi/2$ and that $X$ is a $CAT(1)$-space, we obtain
 $ u(B_{2R})\subset \overline{B_{\rho_0}(Q_m)}$.
Integrating over $B_{\varepsilon_j}(x^o)$,  Lemma  \ref{lem3.3} implies  that
\begin{equation*}
\begin{split}
\alpha l_0\cdot&\int_{B_{\varepsilon_j}(x^o)} \Big(F(x^o,y^o)-F(\exp_{x^o}(\eta), \exp_{y^o}(T\eta))\Big){\rm d}\eta\\
&\ls\Big[ 2a_1-b\cdot \cos\big(d_X(u(x^o),Q_m)\big)\Big]\cdot \frac{\omega_{n-1}\cdot e_u(x^o)}{2n(n+2)}\cdot\epsilon_j^{n+2}+\\
&\ \quad+\Big[ 2a_2-b\cdot \cos\big(d_X(u(y^o),Q_m)\big)\Big]\cdot \frac{\omega_{n-1}\cdot e_u(y^o)}{2n(n+2)}\cdot\epsilon_j^{n+2}+o(\epsilon_j^{n+2}).\\
\end{split}
\end{equation*}
Noticing that $\cos d_X\big(Q_m,u(x^o)\big)=l_1/2$ and that $1-l_1^2/4=l_0^2/4$, we choose $\beta $ such that
\begin{equation}\label{equ4.14}
 a_2=\frac{l^2_1}{4}\qquad \big(\Longleftrightarrow  \beta =1-\frac{l_0(1+2l_0)}{4}\big),
\end{equation}
where we have used $\alpha=1/(1+2l_0)$. Notice that $\beta>0$ provided $l_0\ls1$.
Then we have
\begin{equation}\label{equ4.15}
\begin{split}
-I_2(\varepsilon_j)&=\int_{B_{\varepsilon_j}(0)} \Big(F(x^o,y^o)-F(\exp_{x^o}(\eta), \exp_{y^o}(T\eta))\Big){\rm d}\eta\\
 &\ls\frac{ a_1- l^2_1/4}{\alpha l_0}\cdot \frac{\omega_{n-1}e_u(x^o)}{n(n+2)}\cdot\epsilon_j^{n+2}+ +o(\varepsilon^{n+2}_j).
\end{split}
\end{equation}
From $1-\beta=\frac{l_0}{4 \alpha}$ and $\frac{1}{\alpha}=1+2 l_0$, we have, if $l_0\ls \varepsilon/6$, that
\begin{equation*}
\begin{split}
\frac{ a_1- l^2_1/4}{\alpha l_0}&=\frac{1-\frac{1-\alpha}{2}(1-\frac 1 \beta)-l^2_1/4}{\alpha l_0}\\
&=l_0(1+2l_0)\Big(\frac{1}{4}+\frac{1}{4-l_0(1+2l_0)}\Big)\\
&\ls \frac{l_0}{2}(1+\varepsilon).
\end{split}
\end{equation*}
When  both $t$ and $\delta$ are small enough, the combination of Eq.(\ref{equ4.6}) and Lemma \ref{lem4.1}(3) implies that $l_0\ls \varepsilon/6$.  Therefore, we by (\ref{equ4.15}) get that
\begin{equation*}
\begin{split}
-I_2(\varepsilon_j)\ls(1+\varepsilon)\frac{l_0}{2}\cdot \frac{\omega_{n-1}}{n(n+2)}e_u(x^o)\cdot\varepsilon_j^{n+2}+o(\varepsilon^{n+2}_j).
\end{split}
\end{equation*}

\noindent (iii) \emph{The estimate of} $I_3$.

By Corollary \ref{cor3.2} and the definition of $v$, we have
\begin{equation*}
-I_3(\varepsilon_j)
\ls  \Big(\frac{-K}{t^{p-1}}\cdot L^p_t(x^o)-  (1+\varepsilon)  e_u (x^o) D_t(x^o)-  \theta_0 \Big)\cdot\frac{\omega_{n-1}}{2n(n+2)} \cdot \varepsilon_j^{n+2}+o(\varepsilon_j^{n+2}).
\end{equation*}

By combining  these estimates for $I_1, I_2,  I_3$ and $I_4$, we have
\begin{equation}\label{equ4.16}
\begin{split}
&\frac{K}{t^{p-1}} \Big(|x^oy^o|^p-L^p_{t}(x^o)\Big)+ (1+\varepsilon)  e_u(x^o)\Big(2\sin \frac{d_X(u(x^o),u(y^o))}{2}-D_t(x^o)\Big)\\
& - \theta_0 +C(n)\frac{2n(n+2)}{\omega_{n-1}} \delta\gs0.
\end{split}
\end{equation}
 Eq.(\ref{equ4.6}) implies that $(x^o,y^o)$ converge to $(\bar x,\bar y)$ as $\delta\to0$. We   by the lower semi-continuity of
$L_{t}$ and $D_t$ have that
$$\liminf_{\delta\to0} L_{t}(x^o)\gs L_{t}(\bar x)=\lim_{\delta\to0}|x^oy^o| $$
and
$$\liminf_{\delta\to0} D_{t}(x^o)\gs D_{t}(\bar x)=F(\bar x,\bar y)= \lim_{\delta\to0}2\sin \frac{d_X(u(x^o),u(y^o))}{2}. $$
A contradiction appears in (\ref{equ4.16}) when we take   $\delta\to0$ (noticing that  $K\gs0$). The proof is finished.
\end{proof}

\subsection{The Bochner inequality}$ \ $

We will prove Theorem \ref{bochner} in this subsection.

Let $p\in(1,\infty)$ and let $f_t$ be the auxiliary functions defined as in (\ref{equ4.1}) in the previous subsection,  on a ball $B_R$ with $B_{2R}\subset\subset \Omega$.

\begin{lem}\label{lem4.4} (i) Let $q\in(1,\infty)$ with $1/q+1/p=1$. For any $x\in B_R$, we   have
\begin{equation}\label{equ4.17}
\liminf_{t\to0}\frac{f_t(x)}{t}\gs -\frac{1}{q}{\rm Lip}^qu(x).
\end{equation}
(ii)   If $u$ is metrically differentiable  at $x$, then we have
\begin{equation}\label{equ4.18}
\lim_{t\to0^+}\frac{f_t(x)}{t}= -\frac{G^q_u(x)}{q}
\end{equation}
and
\begin{equation}\label{equ4.19}
\lim_{t\to0^+}\frac{L_t(x)}{t}=G_u^{q/p}(x),\qquad\quad  \lim_{t\to0^+}\frac{D_t(x)}{t}=G_u^q(x).
\end{equation}
\end{lem}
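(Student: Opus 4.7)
The plan is to combine Young's inequality (with conjugate exponents $p$ and $q$) with two observations: the localization $L_t\ls Ct$ from Lemma \ref{lem4.1}(3), which concentrates the infimum in \eqref{equ4.1} near $x$ as $t\to 0$, and the Taylor expansion $F(x,y) = d_X(u(x),u(y))+O(d_X^2(u(x),u(y)))$, which says that $F$ is essentially linear in the metric distance near the diagonal.

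\medskip

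For Part (i), I set $L={\rm Lip}u(x)$ and, given $\varepsilon>0$, exploit the definition of the pointwise Lipschitz constant to find $\delta>0$ with $F(x,y)\ls (L+\varepsilon)(1+\varepsilon)|xy|$ whenever $|xy|<\delta$ (using also $2\sin(s/2)+4\sin^2(s/2)\ls s(1+s)$). Young's inequality then yields
\[
\frac{|xy|^p}{pt^{p-1}}-F(x,y)\gs -\frac{[(L+\varepsilon)(1+\varepsilon)]^q\,t}{q}
\]
for such $y$, while for $|xy|\gs\delta$ the bound $F\ls 6$ makes the left-hand side nonnegative once $t$ is small. Taking the infimum over $y\in B_{2R}$ and letting $\varepsilon\to 0$ gives \eqref{equ4.17}.

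\medskip

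For the identity \eqref{equ4.18}, the complementary upper bound on $f_t(x)/t$ comes by testing \eqref{equ4.1} with $y_t^*=\exp_x(s_t\xi_0)$, where $\xi_0\in \mathbb S^{n-1}$ is a maximizing direction for $mdu_x$ (which exists by continuity of the metric differential on the sphere) and $s_t=L^{q/p}t$ is the minimizer of $s\mapsto s^p/(pt^{p-1})-sL$. Metric differentiability gives $F(x,y_t^*)=s_tL+o(s_t)=L^qt+o(t)$, using $q/p+1=q$, so that
\[
f_t(x)\ls \frac{s_t^p}{pt^{p-1}}-F(x,y_t^*)=-\frac{L^q\,t}{q}+o(t).
\]
Combined with Part (i) and Lemma \ref{lem2.9} (which identifies $G_u(x)={\rm Lip}u(x)=:L$), this proves \eqref{equ4.18}.

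\medskip

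Finally, for \eqref{equ4.19}, I pick $y_t\in S_t(x)$ with $|xy_t|=L_t(x)$, write $y_t=\exp_x(L_t(x)\xi_t)$ with $\xi_t\in \mathbb S^{n-1}$, and set $\alpha_t=L_t(x)/t$, which is bounded by Lemma \ref{lem4.1}(3). The Lipschitz continuity of $u$ upgrades the pointwise metric differentiability at $x$ to uniform convergence over $\mathbb S^{n-1}$, so $F(x,y_t)/t=\alpha_t\cdot mdu_x(\xi_t)+o(1)$, and consequently $f_t(x)/t=\alpha_t^p/p-\alpha_t\cdot mdu_x(\xi_t)+o(1)$. For any subsequential limit $(\alpha_t,\xi_t)\to(\alpha^*,\xi^*)$, identity \eqref{equ4.18} combined with Young's inequality forces
\[
-\frac{L^q}{q}=\frac{(\alpha^*)^p}{p}-\alpha^*\cdot mdu_x(\xi^*)\gs -\frac{mdu_x(\xi^*)^q}{q}\gs -\frac{L^q}{q},
\]
so equality holds throughout, giving $mdu_x(\xi^*)=L$ and $\alpha^*=L^{q/p}$. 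The main obstacle is precisely this uniqueness of $\alpha^*$: it follows from the strict convexity of $s\mapsto s^p/p-sL$, whose unique minimizer is $s=L^{q/p}$. Every subsequence of $L_t(x)/t$ therefore converges to the same limit $L^{q/p}=G_u^{q/p}(x)$, and then $D_t(x)/t=F(x,y_t)/t\to L^{q/p}\cdot L=L^q=G_u^q(x)$.
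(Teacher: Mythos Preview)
Your proof is correct and follows essentially the same strategy as the paper: Young's inequality for part (i), testing with $y_t^*=\exp_x(L^{q/p}t\,\xi_0)$ for the upper bound in \eqref{equ4.18}, and the equality case of Young (equivalently, strict convexity of $s\mapsto s^p/p-sL$) to pin down the limits in \eqref{equ4.19}. The only organizational difference is in \eqref{equ4.19}: you track the pair $(\alpha_t,\xi_t)$ and invoke uniform metric differentiability over $\mathbb S^{n-1}$ (legitimate, since Lipschitz continuity of $u$ makes the family $\xi\mapsto d_X(u(\exp_x(t\xi)),u(x))/t$ equicontinuous on the compact sphere), whereas the paper sidesteps this by first showing $\lim_{t\to0} D_t/L_t={\rm Lip}u(x)$ directly from the ratio $F(x,y_t)/|xy_t|$ and the definition of ${\rm Lip}u$, and only then extracting $\lim_{t\to0} L_t/t$ from the identity $f_t/t=\frac{(L_t/t)^p}{p}-(D_t/L_t)(L_t/t)$. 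Both routes are driven by the same mechanism and neither is materially simpler than the other.
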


\begin{proof} (i) By the basic inequality $a^p/p-ab\gs -b^q/q$ for any $a,b\in\mathbb R$, we have
\begin{equation*}
\frac{1}{p}\cdot\frac{|xy|^p}{ t^p}-\frac{F(x,y)}{t}\gs -\frac{1}{q}\Big(\frac{F(x,y)}{|xy|}\Big)^q,\qquad\forall\ x,y\in B_{2R}.
\end{equation*}
Taking $y_t\in S_t(x)$ with $|xy_t|=L_t(x)$, we obtain from the definition of $f_t$ that
\begin{equation}\label{equ4.20}
\begin{split}
\liminf_{t\to0}\frac{f_t}{t}\gs&-\frac{1}{q}\limsup_{y_t\to x}\Big(\frac{F(x,y_t)}{|xy_t|}\Big)^q=-\frac{1}{q}\limsup_{t\to 0}\Big(\frac{D_t(x)}{L_t(x)}\Big)^q\\
&\gs -\frac{1}{q}{\rm Lip}^qu(x),
\end{split}
\end{equation}
where we have used $\lim_{y\to x} F(x,y)/d_X(u(x),u(y))=1.$ This proves (\ref{equ4.17}).

(ii) Let $u$ be  metrically differentiable at $x$. Take a unit vector $\xi\in T_xM$ such that
$$G_u(x)= mdu_x(\xi).$$
For each small $t>0$, we put $y_{t,x}:=\exp_x(tG_u^{q/p}\cdot\xi)$. Then
$$|xy_{t,x}|=t\cdot G_u^{q/p}(x)$$
and
\begin{equation*}\begin{split}
  d_X\big(u(x),u(y_{t,x})\big)&=  |xy_{t,x}|\cdot  mdu_x(\xi)+o(|xy_{t,x}|)\\
  &=t\cdot G^{q/p+1}_u(x)+o(t)= t\cdot G^{q}_u(x)+o(t),
  \end{split}\end{equation*}
as $t\to0$. Thus,
 by the definition of $f_t$, we obtain
 $$ \frac{f_t(x)}{t}\ls \frac{|xy_{t,x}|^p}{pt^p}-\frac{F(x,y_{t,x})}{t} =\frac{G^q_u(x)}{p}-G^q_u(x) +o(1)=- \frac{G^q_u(x)}{q}  +o(1).$$
as $t\to0$. That is
\begin{equation}\label{equ4.21}
\limsup_{t\to0^+}\frac{f_t(x)}{t}\ls -\frac{G^q_u(x)}{q}
\end{equation}
 Recall that $G_u(x)={\rm Lip}u(x)$ by Lemma \ref{lem2.9}, the combination of (\ref{equ4.17}) and (\ref{equ4.21}) yields (\ref{equ4.18}).

Combining (\ref{equ4.18}) and  (\ref{equ4.20}) together gives
$$\limsup_{t\to 0}\Big(\frac{D_t(x)}{L_t(x)}\Big)^q={\rm Lip}^qu(x).$$
On the other hand,  notice that
$$-\frac{f_t(x)}{t}=-\frac{L^p_t(x)}{pt^p}+\frac{D_t(x)}{L_t(x)}\cdot \frac{L_t(x)}{t}\ls\frac 1 q\Big(\frac{D_t(x)}{L_t(x)}\Big)^q.$$
Thus, we get
$$\liminf_{t\to0} \frac{D_t(x)}{L_t(x)} \gs {\rm Lip}u(x).$$
Therefore, we obtain
 $$\lim_{t\to0} \frac{D_t(x)}{L_t(x)} = {\rm Lip}u(x).$$
By using $f_t/t=\frac{L^p_t}{pt^p}-\frac{D_t}{L_t}\cdot \frac{L_t}{t}$ again, it follows $\lim_{t\to0} \frac{L_t(x)}{t} = {\rm Lip}^{\frac{1}{p-1}}u(x)={\rm Lip}^{q/p}u(x),$ and then $\lim_{t\to0} \frac{D_t(x)}{t} = {\rm Lip}^{1+q/p}u(x)={\rm Lip}^{q}u(x).$
 The proof is finished.
\end{proof}

Now we are in the place to prove Theorem \ref{bochner}.
\begin{proof}[Proof of Theorem  \ref{bochner}:] We have known that ${\rm Lip}\in L^\infty_{\rm loc}(\Omega)$ from Theorem 1.3. By a rescaling argument of the target space, we can assume that $(X,d_X)$ is a $CAT(1)$-space. In this case,
since ${\rm Lip}u=G_u$ for almost all $x\in \Omega$, we need   to prove
$G_u\in W^{1,2}_{\rm loc}(\Omega)$  and that
\begin{equation}\label{equ4.22}
\Delta G_u\gs -K G_u-  e_u\cdot G_u
\end{equation}
on  $\Omega$, in the sense of distributions. It suffices to show that: for any $o\in \Omega$, there exists a neighborhood $B_{R}(o)$ with $B_{2R}(o) \subset \Omega$ such that $G_u\in W^{1,2}_{\rm loc}(B_{R}(o))$  and that (\ref{equ4.22}) holds
on  $B_{R}(o)$, in the sense of distributions.

Since $u$ is continuous (from Theorem \ref{thm2.4}), for any $o\in \Omega$, there exists a neighborhood $B_{R}(o)$ such that
 ${\rm Image}(u(B_{2R}(o))\big)\subset B_{\pi/4}\big(u(o)\big).$ In particular,  the triangle inequality yields ${\rm diam}u(B_{2R}(o))<\pi/2.$ Fix such a neighborhood $B_R =B_R(o)$.

Fix any $p\in[2,\infty)$. From Lemma \ref{lem4.1}(3), we get $\Delta f_t/t\ls C_{\ell_0}$ on $B_{3R/2}$ for any $t\in(0,t_*)$, where the constant $C_{\ell_0}$ is uniformly with respect to $t\in (0,t_*)$.   By combining Lemma \ref{lem4.4}  and Proposition   \ref{prop4.2} together, we have, for any $\varepsilon>0$ that $G^q_u/q\in W^{1,2}_{\rm loc}(B_{3R/2})$  and that
$$\Delta(G^q_u/q)\gs- K G^q_u- (1+\varepsilon)\cdot e_u\cdot G_u^q$$
on  $B_{3R/2}$, in the sense of distributions, where $q=p/(p-1)\in(1,2]$.
  From the arbitrariness of $\varepsilon,$ we conclude that
\begin{equation}\label{equ4.23}
\Delta (G^q_u/q)\gs- K G^q_u-  e_u\cdot G^q_u
\end{equation}
on  $B_{3R/2}$, in the sense of distributions, where $q=p/(p-1)\in(1,2]$.

 In order to take the limit of (\ref{equ4.23}) as $q\to1$, we want to show that the energies $\|\nabla  (G_u^q/q) \|_{L^2(B_{R})}$ are bounded uniformly  with respect to $q$. By the local Lipschitz continuity of $u$, there exists a constant $C_1>1$ such that $G_u, e_u\ls C_1$ on $B_{3R/2}$. Hence, we have
$$\Delta( G_u^q/q)\gs -K\cdot C_1^q-C_1^{q+1}\gs- K\cdot C_1^2-C_1^{3} $$
 on  $B_{3R/2}$, in the sense of distributions, where we have used $K\gs0 $ and $q\in(1,2].$ By applying Caccioppoli inequality, we conclude that the energies $\|\nabla  (G_u^q/q) \|_{L^2(B_{R})}$ are bounded uniformly. Hence, there exists a sequence $\{q_j\}_{j\in\mathbb N}$ with $q_j\in (1,2]$ and $q_j\searrow 1$ such that $\Delta(G_u^{q_j}/q_j)\rightharpoonup \Delta  G_u$. Now, by letting $q_j\to1$ in (\ref{equ4.23}), we conclude that
$G_u\in W^{1,2}(B_R)$  and (\ref{equ4.22}). The proof is finished.
 \end{proof}

\section{Yau's  gradient estimates}

We will continue to assume that  $\Omega$ is a smooth domain of an $n$-dimensional Riemannian manifold $(M,g)$ with $Ric\gs -K$ for some  $K\gs0$,  and that $(X,d_X)$ is a $CAT(\kappa)$-space for some $\kappa\gs0$.
Let $u$ be a harmonic map  from $\Omega$ to $X$. Assume that its image $u(\Omega)\subset X$ is contained in a ball  with radius $<\frac{\pi}{2\sqrt \kappa}$.

When the target space has non-positive curvature, we have the following a consequence of the Bochner inequality, Theorem \ref{bochner}.
\begin{lem}\label{lem5.1}
Let $\kappa=0$. Suppose that $B_{2R}(x_0)\subset\subset \Omega$ and  that $u(B_R(x_0))\subset B_\rho(Q_0)$ for some $\rho>0$ and $Q_0\in X$. We put
$$h=2\rho^2- d_X^2\big(Q_0, u(x)\big)\quad {\rm and}\quad F=\frac{{\rm Lip} u}{h}.$$
Then $F\in W^{1,2}\cap L^\infty(B_R(x_0))$ and
\begin{equation}\label{equ5.1}
\Delta F+2\ip{\nabla F}{\nabla\log h}\gs C \rho^2\cdot F^3-K\cdot F
\end{equation}
in the sense of distribution, where $C=C_{n,\sqrt KR}.$
\end{lem}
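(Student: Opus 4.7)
The plan is to combine three ingredients already at our disposal:
(a) the Bochner inequality (Theorem \ref{bochner}) specialized to $\kappa = 0$, which via the chain rule (cf.\ equation (1.7) in the Introduction) reads $\Delta {\rm Lip}u \gs -K \cdot {\rm Lip}u$ in the sense of distributions;
(b) the subharmonicity $\Delta d_X^2\bigl(Q_0, u(\cdot)\bigr) \gs 2e_u$ from Lemma \ref{lem2.5}; and
(c) the pointwise comparison ${\rm Lip}^2 u \ls C_0 \cdot e_u$ supplied by Corollary \ref{cor2.6}, with $C_0 = C_{n,\sqrt KR}$ (a short covering argument, using that $B_{R/2}(y) \subset B_{2R}(x_0) \subset\subset \Omega$ for every $y \in B_R(x_0)$, extends the estimate from the small ball $B_{R/6}$ appearing in the corollary to all of $B_R(x_0)$, possibly at the cost of enlarging $C_0$).

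First I would verify the regularity assertion. Since $u(B_R(x_0))\subset B_\rho(Q_0)$, one has $\rho^2 \ls h \ls 2\rho^2$, so $h$ is bounded away from $0$. The local Lipschitz continuity of $u$ (Theorem \ref{thm2.4}) makes $h$ Lipschitz on $B_R(x_0)$, while Theorem \ref{bochner} already gives ${\rm Lip}u \in W^{1,2}_{\rm loc}\cap L^\infty_{\rm loc}(\Omega)$; hence $F = {\rm Lip}u/h \in W^{1,2}\cap L^\infty(B_R(x_0))$.

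For the main inequality, start from the product identity $hF = {\rm Lip}u$. Expanding $\Delta(hF) = h\Delta F + 2\ip{\nabla h}{\nabla F} + F\Delta h$ distributionally and dividing by $h$ produces the key rearrangement
$$\Delta F + 2\ip{\nabla F}{\nabla \log h} = \frac{\Delta({\rm Lip}u)}{h} - \frac{F\Delta h}{h}.$$
Inserting (a) gives $\Delta({\rm Lip}u)/h \gs -KF$, while (b) yields $\Delta h \ls -2e_u$ and hence $-F\Delta h/h \gs 2Fe_u/h$. To convert the latter into a cubic term in $F$, invoke (c) together with $F^2 = {\rm Lip}^2 u /h^2$ and the lower bound $h \gs \rho^2$:
$$\frac{2Fe_u}{h} \gs \frac{2F\cdot {\rm Lip}^2 u}{C_0\cdot h} = \frac{2F^3 h}{C_0} \gs \frac{2\rho^2}{C_0}\cdot F^3.$$
Summing the two lower bounds yields \eqref{equ5.1} with $C = 2/C_0 = C_{n,\sqrt KR}$.

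The only real point of care is that every inequality used is distributional, so the quotient-rule identity has to be checked against nonnegative test functions; however, the fact that $h\gs \rho^2>0$ is globally Lipschitz on $B_R(x_0)$ (whence $1/h$ and $\nabla\log h$ are bounded) and that ${\rm Lip}u \in W^{1,2}$ makes this routine. I do not foresee any deeper obstacle.
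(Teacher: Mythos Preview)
Your proof is correct and follows essentially the same approach as the paper: both expand $\Delta(hF)=\Delta{\rm Lip}u$, divide by $h$, insert the Bochner inequality $\Delta{\rm Lip}u\gs-K\,{\rm Lip}u$ and the bound $-\Delta h\gs 2e_u$, and then convert $e_u$ to ${\rm Lip}^2u$ via Corollary~\ref{cor2.6} together with $h\gs\rho^2$. Your explicit mention of the covering argument needed to carry Corollary~\ref{cor2.6} over to all of $B_R(x_0)$ and of the distributional justification of the quotient rule are careful touches that the paper leaves implicit.
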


\begin{proof}
From Theorem \ref{bochner}, we have ${\rm Lip}u\in  W^{1,2}\cap L^\infty(B_R(x_0))$. Noticing that $h$ is Lipschitz continuous and $ h\gs\rho^2,$  we obtain $F\in W^{1,2}\cap L^\infty(B_R(x_0))$.

 By applying the Chain rule to ${\rm Lip}u=hF$, we have
\begin{equation*}
h\cdot\Delta F +2\ip{\nabla F}{\nabla h}+F\cdot \Delta h= \Delta {\rm Lip}u.
\end{equation*}
Multiplying  both sides of this inequality by $h^{-1}$ and substituting (\ref{equ1.7}) and then $-\Delta h\gs 2e_u$ (see Lemma \ref{lem2.5}), we get
\begin{equation}\label{equ5.2}
\begin{split}
\Delta F +2\ip{\nabla F}{\nabla\log h} = -F\cdot \frac{\Delta h}{h}+\frac{\Delta {\rm Lip}u}{h}
 \gs F\cdot \frac{2 e_u}{h}-K F.
\end{split}
\end{equation}
  in the sense of distributions. As Corollary \ref{cor2.6}, we have
\begin{equation*}
\begin{split}
\Delta F +2\ip{\nabla F}{\nabla\log h}&\gs C_{n,\sqrt KR}\frac{F}{h}\cdot {\rm Lip}^2u -KF = C_{n,\sqrt KR}hF^3-KF\\
&\gs C_{n,\sqrt KR}\cdot\rho^2\cdot F^3-KF
\end{split}
\end{equation*}
 where we have used $h\gs \rho^2$ again.
 The proof is finished.
\end{proof}

Similarly, in the case of where the target is a $CAT(1)$-space, we have the following property.

\begin{lem}\label{lem5.2}
Let $\kappa=1$. Suppose that $B_{2R}(x_0)\subset\subset \Omega$ and  that $u(B_R(x_0))\subset B_\rho(Q_0)$ for some $\rho<\pi/2$ and $Q_0\in X$. We put
$\rho_0=\frac{\rho+\pi/2}{2}=\rho+\frac{\pi/2-\rho}{2}$ and
$$h_1= \cos  d_X\big(Q_0, u(x)\big) -\cos \rho_0\quad {\rm and}\quad F=\frac{{\rm Lip} u}{h_1}.$$
Then $F\in W^{1,2}\cap L^\infty(B_R(x_0))$ and
\begin{equation}\label{equ5.3}
\Delta F+2\ip{\nabla F}{\nabla\log h_1}\gs C'\cdot F^3-KF
\end{equation}
in the sense of distribution, where $C'=C'(n,\sqrt KR,\pi/2-\rho)$ is a constant depends on $n,\sqrt KR$ and $\pi/2-\rho.$
\end{lem}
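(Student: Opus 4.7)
The strategy mirrors the proof of Lemma \ref{lem5.1}, but replacing the squared-distance function $h$ by the $CAT(1)$-friendly quantity $h_1 = \cos d_X(Q_0, u(x)) - \cos\rho_0$; the key difference is that the Bochner inequality in Theorem \ref{bochner} now contributes an extra $-e_u\cdot\mathrm{Lip}^2 u$ term, which must be absorbed using the curvature estimate $\Delta \cos d_X(Q_0, u(\cdot)) \le -\cos d_X(Q_0, u(\cdot))\cdot e_u$ from Lemma \ref{lem2.5}. First I would observe that, since $d_X(Q_0, u(x)) \le \rho < \rho_0 < \pi/2$ on $B_R(x_0)$, we have $h_1 \ge \cos\rho - \cos\rho_0 > 0$ there; combined with Theorem \ref{bochner}, which gives $\mathrm{Lip}\,u \in W^{1,2}_{\mathrm{loc}} \cap L^\infty_{\mathrm{loc}}$, the Lipschitz continuity of $h_1$ and its strict positivity imply $F = \mathrm{Lip}\,u/h_1 \in W^{1,2}\cap L^\infty(B_R(x_0))$.

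Next, applying the chain rule to $\mathrm{Lip}\,u = F\cdot h_1$, I obtain
\begin{equation*}
h_1 \Delta F + 2\langle \nabla F, \nabla h_1\rangle + F\cdot\Delta h_1 = \Delta \mathrm{Lip}\,u.
\end{equation*}
I then divide by $h_1$, insert the Bochner inequality $\Delta\mathrm{Lip}\,u \ge -K\cdot\mathrm{Lip}\,u - e_u\cdot\mathrm{Lip}\,u$ from Theorem \ref{bochner} (with $\kappa = 1$), and use Lemma \ref{lem2.5} in the form $-\Delta h_1 \ge \cos d_X(Q_0, u(x))\cdot e_u$ together with $\mathrm{Lip}\,u = F h_1$, to arrive at
\begin{equation*}
\Delta F + 2\langle \nabla F, \nabla\log h_1\rangle \ge F\cdot\frac{\cos d_X(Q_0, u(x))\cdot e_u}{h_1} - KF - e_u\cdot F.
\end{equation*}
The crucial algebraic step is to recognize that $\cos d_X(Q_0, u(x)) - h_1 = \cos\rho_0$ (by the very definition of $h_1$), which collapses the right-hand side to
\begin{equation*}
\Delta F + 2\langle \nabla F, \nabla\log h_1\rangle \ge F\cdot e_u\cdot\frac{\cos\rho_0}{h_1} - KF.
\end{equation*}
This is the point where the specific choice of $\rho_0 = (\rho + \pi/2)/2$ pays off: it ensures both $\cos\rho_0 > 0$ and $\cos\rho > \cos\rho_0$, so nothing degenerates.

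Finally, Corollary \ref{cor2.6} provides $\mathrm{Lip}^2 u \le C_{n,\sqrt{K}R}\cdot e_u$, i.e.\ $e_u \ge C_{n,\sqrt{K}R}^{-1}\cdot F^2 h_1^2$, and plugging this into the previous estimate gives
\begin{equation*}
\Delta F + 2\langle \nabla F, \nabla\log h_1\rangle \ge C_{n,\sqrt{K}R}^{-1}\cdot\cos\rho_0\cdot F^3\cdot h_1 - KF \ge C'\cdot F^3 - KF,
\end{equation*}
with $C' := C_{n,\sqrt{K}R}^{-1}\cdot\cos\rho_0\cdot(\cos\rho - \cos\rho_0)$, using the lower bound $h_1 \ge \cos\rho - \cos\rho_0 > 0$. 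The constant $C'$ depends only on $n$, $\sqrt{K}R$, and $\pi/2 - \rho$ as required. The main conceptual obstacle is the algebraic cancellation that produces the factor $\cos\rho_0$; everything else is a direct adaptation of the $\kappa = 0$ argument, with the squared distance replaced by $-\cos d_X(Q_0, u(\cdot))$ and the extra Bochner term absorbed via that cancellation.
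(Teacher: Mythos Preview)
Your proof is correct and follows the same strategy as the paper's (very terse) argument. In fact, you make explicit the crucial step the paper glosses over: the identity $\cos d_X(Q_0,u(x)) - h_1 = \cos\rho_0$ is exactly what allows the extra $-e_u F$ term from the $\kappa=1$ Bochner inequality to be absorbed, and your constant $C' = C_{n,\sqrt{K}R}^{-1}\cos\rho_0\,(\cos\rho-\cos\rho_0)$ is the one that actually falls out of this computation (the paper's stated factor $\cos\rho$ in \eqref{equ5.4} appears to be a slip for $\cos\rho_0$, but either constant depends only on $\pi/2-\rho$, so the conclusion is unaffected).
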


\begin{proof} It is easy to see that $h_1$  is Lipschitz continuous and
$$  h_1\gs \cos\rho-\cos\rho_0:=C'_{1}>0$$ for some positive number $C'_1$ depends only on $\pi/2-\rho.$
From Lemma \ref{lem2.5}, we have
$$-\Delta h_1\gs \cos d_X^2\big(Q_0, u(x)\big)\cdot e_u\gs \cos\rho \cdot e_u.$$
 The remain of the proof, by a  similar  argument as in Lemma \ref{lem5.1}, we have
\begin{equation}\label{equ5.4}
\Delta F +2\ip{\nabla F}{\nabla\log h_1}\gs C_{n,\sqrt KR}\cdot C'_1\cdot\cos\rho \cdot F^3-KF
\end{equation}
in the sense of distributions.
\end{proof}

In general, the function ${\rm Lip}u$ does not smooth, even  may not be continuous. It is difficult to employ the argument in \cite{cheng80} directly. So we will ues the following the approximating version of the maximum principle.
\begin{thm}\label{app-mp}{\rm (\cite[Theorem 1.4]{zz16-2})}
Let $f(x)\in W^{1,2}_{\rm loc}\cap L^\infty_{\rm loc}(\Omega)$  such that  $\Delta f $ is a signed Radon measure with $\Delta^{\rm sing}f\gs 0$, where
$\Delta f=\Delta^{\rm ac}f\cdot {\rm vol}_g+\Delta^{\rm sing}f$ is the Radon-Nikodym  decomposition  with respect to ${\rm vol}_g$.
Suppose that $f$ achieves one of its   strict maximum in  $\Omega$ in the sense that: there exists a neighborhood $U\subset\subset \Omega$ such that
\begin{equation*}
\sup_{U}f>\sup_{\Omega\backslash U}f.
\end{equation*}

Then, given any $w\in  W^{1,2}(\Omega)\cap L^\infty(\Omega)$, there exists a sequence of points $\{x_j\}_{j\in\mathbb N}\subset U$  such that they are the approximate continuity points of $\Delta^{\rm ac}f$ and $\ip{\nabla f}{\nabla w}$, and that
$$f(x_j)\gs \sup_\Omega f-1/j\qquad{\rm and }\qquad  \Delta^{\rm ac}f(x_j)+\ip{\nabla f}{\nabla w}(x_j)\ls 1/j.$$
Here and in the sequel, $\sup_{U}f$ means  ${\rm ess\!-\!sup}_{U}f$.
\end{thm}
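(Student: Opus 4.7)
The plan is to argue by contradiction using a weighted weak maximum principle. Setting $M:=\sup_\Omega f$, the negation of the conclusion produces some $j_0\in\mathbb N$ such that, writing $\varepsilon_0:=1/j_0$, every approximate continuity point $x\in U$ of $\Delta^{\rm ac}f$ and $\ip{\nabla f}{\nabla w}$ with $f(x)>M-\varepsilon_0$ satisfies $(\Delta^{\rm ac}f+\ip{\nabla f}{\nabla w})(x)>\varepsilon_0$. Since approximate continuity points form a set of full measure, this pointwise inequality holds almost everywhere on the positive-measure set $\{f>M-\varepsilon_0\}$.

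The core identity I would use is a distributional integration by parts twisted by the drift $w$. Since $w\in W^{1,2}\cap L^\infty$, the weight $e^w$ lies in $W^{1,2}_{\rm loc}\cap L^\infty$, and a standard mollification-and-density argument extends the pairing $\int\varphi\,d(\Delta f)=-\int\ip{\nabla f}{\nabla\varphi}\dv$ to test functions $\varphi\in W^{1,2}\cap L^\infty$ compactly supported in $\Omega$. Testing with $\varphi=e^w\psi$ for nonnegative $\psi\in W^{1,2}_0(U)\cap L^\infty$ and expanding $\nabla(e^w\psi)$ gives
\begin{equation*}
\int_\Omega e^w\psi\bigl(\Delta^{\rm ac}f+\ip{\nabla f}{\nabla w}\bigr)\dv+\int_\Omega e^w\psi\,d\Delta^{\rm sing}f=-\int_\Omega e^w\ip{\nabla f}{\nabla\psi}\dv.
\end{equation*}
Dropping the nonnegative singular term and invoking the pointwise lower bound on the support of $\psi$ yields
\begin{equation*}
\varepsilon_0\int_\Omega e^w\psi\dv\ls-\int_\Omega e^w\ip{\nabla f}{\nabla\psi}\dv.
\end{equation*}

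To close the contradiction, I would choose the test function $\psi:=(f-(M-\eta))_+$ for some $\eta>0$ strictly smaller than both $\varepsilon_0$ and the strict-max gap $\delta:=\sup_U f-\sup_{\Omega\setminus U}f$. The gap condition forces $\psi\equiv 0$ almost everywhere on $\Omega\setminus U$, so $\psi\in W^{1,2}_0(U)\cap L^\infty$ with $\mathrm{supp}\,\psi\subset\{f>M-\eta\}\subset\{f>M-\varepsilon_0\}$, and $\nabla\psi=\chi_{\{f>M-\eta\}}\nabla f$ by the standard truncation rules. Substituting,
\begin{equation*}
\varepsilon_0\int_{\{f>M-\eta\}}e^w(f-M+\eta)\dv\ls-\int_{\{f>M-\eta\}}e^w|\nabla f|^2\dv\ls 0,
\end{equation*}
but the left-hand side is strictly positive because $\{f>M-\eta\}$ has positive measure (by the very definition of the essential supremum) and $e^w>0$, a contradiction.

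The main obstacle I foresee lies in justifying the extension of the distributional pairing with the Radon measure $\Delta f$ from $C_c^\infty$ test functions to those of merely Sobolev regularity, such as the truncation $\psi=(f-(M-\eta))_+$. Concretely, one approximates $\psi$ by $\psi_k\in C_c^\infty(U)$ with $\psi_k\to\psi$ in $W^{1,2}$ and uniformly bounded in $L^\infty$, then passes to the limit in $\int e^w\psi_k\,d(\Delta f)$ using that $|\Delta f|$ is a locally finite Radon measure, combined with dominated convergence on the measure side and Cauchy-Schwarz on the gradient side. A secondary but routine point is upgrading the hypothesis from ``at approximate continuity points'' to ``almost everywhere'' on $\{f>M-\varepsilon_0\}$, which is immediate because non-approximate-continuity points of $L^1_{\rm loc}$ functions form a Lebesgue null set.
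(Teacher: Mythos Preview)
The paper does not prove this statement; it simply cites \cite[Theorem~1.4]{zz16-2}, where the result is established in the general $RCD^*(K,N)$ setting, and remarks that the Riemannian case follows by specialisation. Your proposal, by contrast, supplies a direct and essentially correct self-contained argument in the smooth setting via the weighted test function $e^w(f-(M-\eta))_+$, which is the natural energy trick here: the weight $e^w$ absorbs the drift $\ip{\nabla f}{\nabla w}$ into the divergence structure, and the truncation localises to the near-maximum set while turning the gradient term into the definite-sign quantity $-\int e^w|\nabla f|^2\,\dv$.

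The only point needing care is exactly the one you flag: extending the pairing with the Radon measure $\Delta f$ to the non-smooth test function $e^w\psi$. Your mollification sketch works---approximate by nonnegative $\varphi_k\in C_c^\infty(\Omega)$, use $\Delta^{\rm sing}f\gs0$ at the smooth level to obtain $\int\varphi_k\,\Delta^{\rm ac}f\,\dv\ls-\int\ip{\nabla f}{\nabla\varphi_k}\,\dv$, then pass to the limit by dominated convergence on the left (since $\Delta^{\rm ac}f\in L^1_{\rm loc}$ and the $\varphi_k$ are uniformly bounded) and $W^{1,2}$-convergence on the right. One minor correction: what you actually need, and what actually follows from $\psi=0$ a.e.\ on $\Omega\setminus U$ together with $U\subset\subset\Omega$, is $\psi\in W^{1,2}_0(\Omega)$ rather than $W^{1,2}_0(U)$; this is all the integration by parts over $\Omega$ requires.
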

\begin{proof}It was proved in \cite{zz16-2} in the setting of metric measure spaces with generalized Ricci curvature bounded from below. In particular, it holds for Riemannian manifolds with Ricci curvature bounded from below (with the Riemannian measure).
\end{proof}
The proof of  the Theorem \ref{thm1.4} and Theorem \ref{thm1.8} are both based on the following lemma.
\begin{lem}\label{lem5.4}
Let $B_R(x_0)\subset\Omega$ and  let  $F\in W^{1,2}_{\rm loc}\cap L^\infty_{\rm loc}(B_R(x_0))$  be a nonnegative function. Assume  that $F$ satisfies
\begin{equation}\label{equ5.5}
\Delta F+\ip{\nabla F}{\nabla v}\gs a_1F^3-a_2F,
\end{equation}
in the sense of distributions, where $v\in W^{1,2}_{\rm loc}\cap L^\infty_{\rm loc}(B_R(x_0))$ such that $|\nabla v|\ls a_3F$, and  the constants $a_1,a_3>0$ and $a_2\gs0.$
Then there exists a constant $C_{n,\sqrt KR}$ such that
$$\sup_{B_{R/2}(x_0)}F^2\ls  \frac{2a_2}{a_1} +\frac {C_{n,\sqrt KR}}{R^2}\bigg(\frac{1}{a_1}+\frac{a_3^2}{ a_1^2}\bigg).$$
\end{lem}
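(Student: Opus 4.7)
The approach is the classical Yau--Cheng cutoff maximum--principle argument, carried out in the distributional setting via the approximating maximum principle Theorem~\ref{app-mp}. I fix a smooth cutoff $\eta$ supported in $B_{3R/4}(x_0)$, identically $1$ on $B_{R/2}(x_0)$, with $0\ls\eta\ls 1$, $|\nabla \eta|^2\ls C_1/R^2$, and (via Laplacian comparison under $Ric_M\gs-K$) $\Delta \eta\gs -C_{n,\sqrt KR}/R^2$ in the distributional sense. Set $G:=\eta F$; this is a nonnegative function in $W^{1,2}_{\rm loc}\cap L^\infty(B_R(x_0))$ vanishing in a neighborhood of $\partial B_R(x_0)$. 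Let $M:=\sup_{B_R(x_0)}G$. I may assume $M>0$, otherwise $F\equiv 0$ on $B_{R/2}(x_0)$ and the claim is trivial; the vanishing of $G$ near $\partial B_R(x_0)$ supplies the strict--maximum hypothesis of Theorem~\ref{app-mp}.

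Formally, at a smooth maximum point $\bar x$ of $G$ one would use $\nabla G(\bar x)=0$, that is $\nabla F=-(F/\eta)\nabla \eta$, together with $\Delta G(\bar x)\ls 0$ and the hypothesis $\Delta F\gs a_1F^3-a_2F-\ip{\nabla F}{\nabla v}$ to obtain, after multiplication by $\eta$,
\[
\eta^2 a_1 F^2(\bar x)\ls -\eta\Delta \eta+\eta^2a_2+2|\nabla\eta|^2-\eta\ip{\nabla\eta}{\nabla v}\quad\text{at }\bar x;
\]
the bound $|\nabla v|\ls a_3F$ and Young's inequality on the last term then yield $(a_1/2)G^2(\bar x)\ls a_2+(C_{n,\sqrt KR}/R^2)(1+a_3^2/a_1)$, which is the desired estimate since $M\gs\sup_{B_{R/2}(x_0)}F$.

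To mimic this at the distributional level, my plan is to apply Theorem~\ref{app-mp} to $G$ with the test function $w:=v-2\log\eta$ (regularized as $w_\tau:=v-2\log(\eta+\tau)\in W^{1,2}\cap L^\infty$ for $\tau>0$, and then letting $\tau\to 0^+$ at the end). The point of this choice is the distributional identity
\[
\Delta G+\ip{\nabla G}{\nabla w}=F\Delta\eta+\eta\Delta F+\eta\ip{\nabla F}{\nabla v}-\frac{2F|\nabla\eta|^2}{\eta}+F\ip{\nabla\eta}{\nabla v},
\]
in which the cross term $2\ip{\nabla\eta}{\nabla F}$ from $\Delta(\eta F)$ is exactly cancelled by the $-2\ip{\nabla F}{\nabla\eta}$ contribution of $\ip{\nabla G}{\nabla(-2\log\eta)}$. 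Using the hypothesis to bound $\eta\Delta F+\eta\ip{\nabla F}{\nabla v}\gs\eta(a_1F^3-a_2F)$ and observing that $\Delta G$ has nonnegative singular part (because $\Delta F-[a_1F^3-a_2F-\ip{\nabla F}{\nabla v}]$ is a nonnegative Radon measure and $\eta\gs 0$), Theorem~\ref{app-mp} yields points $x_j\in B_R(x_0)$ with $G(x_j)\to M$ at which
\[
\frac{1}{j}\gs F(x_j)\Delta\eta(x_j)+\eta(x_j)\big(a_1F^3(x_j)-a_2F(x_j)\big)-\frac{2F(x_j)|\nabla\eta(x_j)|^2}{\eta(x_j)}+F(x_j)\ip{\nabla\eta}{\nabla v}(x_j).
\]
Dividing by $F(x_j)>0$, multiplying by $\eta(x_j)$, bounding $|\ip{\nabla\eta}{\nabla v}|\ls a_3F|\nabla\eta|$ via $|\nabla v|\ls a_3F$, and using Young's inequality to absorb $a_3|\nabla\eta|G$ into $\tfrac12 a_1G^2$, then sending $j\to\infty$ (and $\tau\to 0^+$) yields $(a_1/2)M^2\ls a_2+(C_{n,\sqrt KR}/R^2)(1+a_3^2/a_1)$, which closes the argument.

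The principal obstacle is the technical handling of the unbounded weight $w=v-2\log\eta$, which fails to lie in $L^\infty(B_R(x_0))$: either the regularization $\log(\eta+\tau)$ must be used with a careful two--parameter limit (noting that the approximate maximum points $x_j$ lie where $\eta$ is bounded away from zero, uniformly in small $\tau$, so the spurious $\tau/(\eta+\tau)$ terms appearing in the key identity vanish in the limit), or Theorem~\ref{app-mp} must be applied on the open subset $\{\eta>\varepsilon\}$ with $\varepsilon\to 0^+$. A secondary but routine point is verifying that $\Delta G$ admits the Radon--Nikodym decomposition with nonnegative singular part demanded by Theorem~\ref{app-mp}, which follows directly from the distributional lower bound on $\Delta F$ in the hypothesis together with the smoothness of $\eta$.
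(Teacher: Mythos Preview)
Your approach is essentially the paper's: set $G=\eta F$, choose $w=v-2\log\eta$ so that the cross term $2\ip{\nabla\eta}{\nabla F}$ cancels, derive a distributional lower bound for $\Delta G+\ip{\nabla G}{\nabla w}$, and apply Theorem~\ref{app-mp}. The paper handles your ``principal obstacle'' more simply than either of your regularizations: it takes $\eta$ to satisfy $\delta\ls\eta\ls 1$ on $B_R$, with $\eta\equiv 1$ on $B_{R/2}$ and $\eta\equiv\delta$ on $B_R\setminus B_{3R/4}$, where $\delta<\tfrac12\,\sup_{B_{R/2}}F/\sup_{B_{3R/4}}F$. Then $\eta$ is bounded away from zero, so $w=v-2\log\eta\in W^{1,2}\cap L^\infty(B_{3R/4})$ directly, and the choice of $\delta$ guarantees that $G$ still has a strict interior maximum on $B_{3R/4}$. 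This is morally your second workaround (restrict to $\{\eta>\varepsilon\}$) built into the cutoff from the start, and it avoids any limiting procedure.

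One caution on your first workaround: with $w_\tau=v-2\log(\eta+\tau)$ the spurious error in the key identity is $2\ip{\nabla\eta}{\nabla F}\cdot\tau/(\eta+\tau)$, and at the approximate maximum points $x_j$ this involves $\nabla F(x_j)$, which is only an $L^2$ function and not controlled pointwise by Theorem~\ref{app-mp}. So the claim that this term ``vanishes in the limit'' does not follow just from $\eta(x_j)\gs c>0$; you would need to re-express it via $\nabla G$ or argue more carefully. Your second workaround, or the paper's $\eta\gs\delta$ device, sidesteps this cleanly. Your observation that $\Delta^{\rm sing}G\gs 0$ follows from the distributional lower bound on $\Delta F$ together with smoothness of $\eta$ is correct and is a point the paper does not make explicit.
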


\begin{proof}
Fix  any a small  number $\delta$ such that  $0<\delta<  \frac 1 2\frac{\sup_{B_{R/2}}F}{\sup_{B_{3R/4}}F}$. Let us choose   $\eta(x)=\eta(r(x))$ to be a function of the distance $r$ to the fixed point $x_0$ with the following property that
$$  \delta\ls \eta \ls1\ \ {\rm on}\ \ B_{R},\qquad \eta =1\ \ {\rm on}\ \ B_{R/2},\qquad \eta =\delta\ \ {\rm on}\ \ B_{R}\backslash B_{3R/4},$$
and
$$ - \frac{C_1}{R} \ls  \eta'(r)  \ls 0\quad {\rm and}\quad  |\eta ''(r)|   \ls \frac{C_1}{R^2} \qquad \forall\ r\in(0,3R/4)$$
for some  universal constant $C_1$ (which is independent of  $n,K,R$). Then we have
\begin{equation}\label{eq5.6}
 |\nabla \eta | =  |\eta '||\nabla r| \ls  \frac{C_1}{R} \quad\ {\rm on}\ \    B_{3R/4},
\end{equation}
and, by the Laplacian comparison theorem,  that
\begin{equation}\label{eq5.7}
\begin{split}
 \Delta\eta &=\eta' \Delta r+\eta''|\nabla r|^2\gs-\frac{C_1}{R} \Big(\sqrt{(n-1)K}\coth\big(r\sqrt{K/(n-1)}\big)\Big)  - \frac{C_1}{R^2} \\
 &   \gs -\frac{C_1}{R} \Big(\sqrt{(n-1)K }+\frac{n-1}{R}\Big)  - \frac{C_1}{R^2} =- \frac{C_1\sqrt{(n-1)K}R+nC_1}{R^2} \gs -\frac{C_2}{R^2}
 \end{split}
 \end{equation}
on $  B_{3R/4}$, in the sense of distributions, where we have used that
$$\coth\big(r\sqrt{K/(n-1)}\big)\ls \coth\big(R\sqrt{K/(n-1)}\big)\ls  1+\frac{1}{R\sqrt{K/(n-1)}}. $$
Here and in the sequel of this proof, we denote $C_1,C_2,C_3, \cdots$  the various constants which depend only on $n$ and $\sqrt KR$.

Now we put $G=\eta F$.  
Then  $G$ is in $ W^{1,2} (B_{3R/4}) \cap L^\infty (B_{3R/4})$ and $G$ achieves one of its  strict maximum in  $B_{R/2}$ in the   sense of Theorem \ref{app-mp}.
\begin{equation*}
\begin{split}
 \Delta G +\ip{\nabla G}{\nabla v}&=\Delta \eta\cdot F+2\ip{\nabla \eta}{\nabla (G/\eta)}+\eta\cdot \Delta F+\eta\ip{\nabla F}{\nabla v}+F\ip{\nabla \eta}{\nabla v}\\
 &\gs \Delta\eta\cdot \frac G \eta+2\ip{\nabla \log\eta}{\nabla G}-2\frac{|\nabla \eta|^2}{\eta}\cdot\frac G \eta+\eta\big(\Delta F+\ip{\nabla F}{\nabla v}\big)+\frac G \eta\ip{\nabla \eta}{\nabla v}\\
  &\gs - \frac G \eta\frac{C_2}{R^2}+2\ip{\nabla \log\eta}{\nabla G} - \frac{G}{\eta^2}\frac{2C^2_1}{R^2}+\eta\cdot \big(a_1F^3-a_2F\big)+\frac G \eta\ip{\nabla \eta}{\nabla v}.
  \end{split}
 \end{equation*}
By setting $w=v-2\log\eta\in  W^{1,2}(B_{3R/4})\cap L^\infty (B_{3R/4})$ and using
$|\nabla v|\ls a_3F=a_3\frac G \eta,$
we have
\begin{equation*}
\begin{split}
 \Delta G +\ip{\nabla G}{\nabla w}& \gs -\frac{C_2}{R^2}\frac G \eta-\frac{2C^2_1}{R^2}\frac{G}{\eta^2} +\eta\big(a_1(G/\eta)^3-a_2(G/\eta)\big)-a_3\frac G \eta |\nabla\eta| \frac G \eta\\
 & \gs -\frac{C_2}{R^2}\frac G \eta-\frac{2C^2_1}{R^2}\frac{G}{\eta^2} +a_1   \frac{G^3}{\eta^2}-a_2G-a_3\frac{G^2}{ \eta^2}  \frac{C_1}{R}\\
 & \gs -\frac{C_2}{R^2}\frac G \eta\frac 1 \eta-\frac{2C^2_1}{R^2}\frac{G}{\eta^2} +a_1   \frac{G^3}{\eta^2}-a_2G \frac {1}{\eta^2}-a_3\frac{G^2}{ \eta^2}  \frac{C_1}{R}\\
 &  \gs \frac{G}{\eta^2}\Big[-\frac{C_2}{R^2} -\frac{2C^2_1}{R^2}  +a_1  G^2-a_2 -a_3  \frac{C_1}{R}G\Big],
  \end{split}
 \end{equation*}
 where we have used $G\gs0$,   $1/\eta\gs1$ and $a_2\gs0$. Let $C_3:=C_2+2C_1^2$. Substituting  $\frac{a_3C_1}{R}\cdot G\ls \frac{a_1}{2}G^2+\frac{1}{2a_1}(\frac{a_3C_1}{R})^2$ into the above inequality, we obtain
 \begin{equation}\label{eq5.8}
 \Delta G +\ip{\nabla G}{\nabla w}\gs \frac{G}{\eta^2}\Big[-\frac{C_3}{R^2}    +\frac{a_1 }{2} G^2-a_2 -\frac{C_1^2a_3^2}{2a_1 R^2}\Big]
 \end{equation}
 in the sense of distributions.
According to  Theorem \ref{app-mp},  there exit  a sequence $\{x_j\}_{j\in\mathbb N}$ such that, for each $j\in\mathbb N$,
\begin{equation*}
G_j:=G(x_j)\gs \sup_{B_{3R/2}}G-1/j
\end{equation*}
and that
\begin{equation*}
\frac{G_j}{\eta^2(x_j)} \Big[\frac{a_1 }{2} G_j^2-a_2 -\frac{C_3}{R^2} -\frac{C_1^2a_3^2}{2a_1 R^2}\Big]\ls \frac{1}{j}.
\end{equation*}
As $\eta\gs \delta>0$, by  letting $j\to\infty$, we have
\begin{equation*}
\sup_{B_{3R/4}}G^2=\lim_{j\to\infty} G_j^2\ls   \frac{2a_2}{a_1} +\frac{2C_3}{a_1R^2}+\frac{C_1^2a_3^2}{ a_1^2 R^2}.
 \end{equation*}
This yields
$$\sup_{B_{R/2}}F^2\ls \sup_{B_{3R/4}}G^2\ls  \frac{2a_2}{a_1} +\frac{2C_3}{a_1R^2}+\frac{C_1^2a_3^2}{ a_1^2 R^2}\ls \frac{2a_2}{a_1} +\frac {C_4}{R^2}\Big(\frac{1}{a_1}+\frac{a_3^2}{ a_1^2}\Big),$$
where $C_4:=\max\{2C_3,C_1^2\}.$ The proof is finished.
\end{proof}

Now we are in the place to show the main results.
\begin{proof}[Proof of Theorem \ref{thm1.4}] By applying Lemma 5.4 to (\ref{equ5.1}) in Lemma 5.1 with $v=2\log h$ and
$$a_1=C_5\rho^2,\quad a_2=K,\quad a_3=2\rho,$$
and noticing that
$$|\nabla v|=2\frac{|\nabla h|}{h}=2\frac{d_X \big(Q_0, u(x)\big)\cdot |\nabla d_X\big(Q_0, u(x)\big)|}{h}\ls 2\frac{\rho\cdot {\rm Lip} u}{h}=2\rho F,$$
We conclude that, for some constants $C_5,C_6,C_7$  depending only on $n$ and $\sqrt KR$, it holds
\begin{equation}\label{equ5.9}
\sup_{B_{R/2}(x_0)}\frac{{\rm Lip}^2 u}{\big(2\rho^2-d_X^2\big(Q_0, u(x)\big)\big)^2}\ls \frac{2K}{C_5\rho^2}+\frac{C_6}{R^2}\Big(\frac{1}{C_5\rho^2}+\frac{4\rho^2}{C_5^2\rho^4}\Big)\ls \frac{C_7}{\rho^2}\Big(K+\frac{1}{R^2}\Big).
\end{equation}
 This implies
\begin{equation*}
\begin{split}
\sup_{B_{R/2}(x_0)} {\rm Lip}^2 u & \ls \frac{C_7}{\rho^2}\Big(K+\frac{1}{R^2}\Big)\cdot \sup_{B_{R/2}(x_0)} \big(2\rho^2-d_X^2\big(Q_0, u(x)\big)\big)^2\\
& \ls  C_7 \frac{4\rho^2}{R^2} (KR^2+1 )=C_8\cdot \frac{\rho^2}{R^2}.
\end{split}
\end{equation*}
for some constant  $C_8$  depending only on $n$ and $\sqrt KR$.  The proof is finished.
\end{proof}

\begin{proof}[Proof of Theorem \ref{thm1.8}] By applying Lemma 5.4 to Lemma 5.2 with $v=2\log h$ and noticing that
$$|\nabla v|=2\frac{|\nabla h|}{h}=2\frac{\sin d_X\big(Q_0, u(x)\big)\cdot |\nabla d_X \big(Q_0, u(x)\big)|}{h}\ls 2\frac{ {\rm Lip} u}{h}=2  F,$$
and choosing
$$a_1=C'_1,\quad a_2=K,\quad a_3=2,$$
Here and in the sequel of this proof, we denote $C'_1,C'_2,C'_3, \cdots$  the various constants which depend only on $n, \sqrt KR$ and  $ \pi/2-\rho$.
We conclude that
$$\sup_{B_{R/2}(x_0)}\frac{{\rm Lip}^2 u}{\big( \cos d_X(Q_0, u)  -\cos \rho_0\big)^2}\ls \frac{2K}{C'_1}+\frac{C_2'}{R^2}\Big(\frac{1}{C'_1}+\frac{4}{(C'_1)^2}\Big)\ls C_3'\Big(K+\frac{1}{R^2}\Big),$$
 where $\rho_0=\pi/4+\rho/2$. By noticing that $ \cos (d_X\circ u)-\cos \rho_0\ls   \cos \rho$, this implies
\begin{equation*}
\sup_{B_{R/2}(x_0)} {\rm Lip}^2 u  \ls C_3'\cos\rho\Big(K+\frac{1}{R^2}\Big)\ls \frac{C_4'}{R^2}.
\end{equation*}
 The proof is finished.
\end{proof}

 \appendix
   \renewcommand{\appendixname}{Appendix~\Alph{section}}

       \section{An generalized Jensen's lemma and the proof of Lemma \ref{lem4.3}.}
We need a simple lemma for symmetric matrices as follows.
\begin{lem}\label{lem-a1}
Let $A=(a_{ij})_{n\times n},B=(b_{ij})_{n\times n}$ be two symmetric matrices. Assume that $B$ is nonnegative definite, and that $|a_{ij}-\delta_{ij}|\ls \frac{1}{2n^2}$ for any $1\ls i,j\ls n$, where $I=(\delta_{ij})_{n\times n}$ is the identity matrix. If ${\rm trace}(AB)\ls C$ for some $C>0$, then we have
$$|{\rm det}B|\ls (2C)^n.$$
\end{lem}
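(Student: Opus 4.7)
The plan is elementary: first show that the perturbation hypothesis forces $A$ to be uniformly positive definite, then translate the trace bound on $AB$ into a trace bound on $B$ alone, and finally invoke AM--GM on the eigenvalues of $B$.

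First I would verify that $A \geqslant \tfrac{1}{2}I$ as quadratic forms. For any $v = (v_1,\dots,v_n) \in \mathbb{R}^n$, writing $A = I + (A - I)$,
\begin{equation*}
v^T A v = |v|^2 + \sum_{i,j}(a_{ij}-\delta_{ij}) v_i v_j,
\end{equation*}
and the hypothesis $|a_{ij}-\delta_{ij}| \leqslant \tfrac{1}{2n^2}$ together with $\sum_i |v_i| \leqslant \sqrt{n}\,|v|$ give
\begin{equation*}
\Big|\sum_{i,j}(a_{ij}-\delta_{ij}) v_i v_j\Big| \leqslant \frac{1}{2n^2}\Big(\sum_i |v_i|\Big)^2 \leqslant \frac{1}{2n}|v|^2.
\end{equation*}
Hence $v^T A v \geqslant (1 - \tfrac{1}{2n})|v|^2 \geqslant \tfrac{1}{2}|v|^2$, so $A \geqslant \tfrac{1}{2} I$.

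Next, since $B$ is symmetric nonnegative definite, its square root $B^{1/2}$ exists and is symmetric nonnegative definite, and the cyclic property of trace yields
\begin{equation*}
\mathrm{trace}(AB) = \mathrm{trace}(B^{1/2} A B^{1/2}) \geqslant \tfrac{1}{2}\,\mathrm{trace}(B),
\end{equation*}
because $B^{1/2} A B^{1/2} \geqslant \tfrac{1}{2}\,B$. Combined with the hypothesis $\mathrm{trace}(AB) \leqslant C$, this gives $\mathrm{trace}(B) \leqslant 2C$.

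Finally, letting $\lambda_1,\dots,\lambda_n \geqslant 0$ denote the eigenvalues of $B$, the AM--GM inequality yields
\begin{equation*}
|\det B| = \prod_{i=1}^{n}\lambda_i \leqslant \Big(\frac{1}{n}\sum_{i=1}^{n}\lambda_i\Big)^n = \Big(\frac{\mathrm{trace}(B)}{n}\Big)^n \leqslant \Big(\frac{2C}{n}\Big)^n \leqslant (2C)^n,
\end{equation*}
which is the claim. There is no real obstacle here; the only point worth double-checking is the coercivity constant in the first step, where the factor $\tfrac{1}{2n^2}$ (rather than, say, $\tfrac{1}{n}$) is exactly what makes $A \geqslant \tfrac{1}{2}I$ work uniformly in $n$.
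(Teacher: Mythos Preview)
Your proof is correct and in fact yields the sharper bound $|\det B|\ls (2C/n)^n$ along the way. The route differs from the paper's: you establish the coercivity $A\gs\tfrac12 I$, conjugate by $B^{1/2}$ to get $\mathrm{trace}(B)\ls 2C$, and finish with AM--GM on the eigenvalues. The paper instead works with the Frobenius norm: it writes $\mathrm{trace}(B)=\mathrm{trace}((I-A)B)+\mathrm{trace}(AB)$, bounds the first term by $\|I-A\|\cdot\|B\|\ls \tfrac{1}{2n}\cdot\sqrt n\,\bar\mu$ where $\bar\mu$ is the largest eigenvalue of $B$, and uses $\bar\mu\ls\mathrm{trace}(B)$ to deduce $\bar\mu\ls 2C$, then $\det B\ls\bar\mu^{\,n}$. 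Your argument is arguably cleaner and extracts a stronger trace bound; the paper's version avoids the square-root step but needs the Frobenius--operator norm comparison $\|B\|\ls\sqrt n\,\bar\mu$. Either way the constant $\tfrac{1}{2n^2}$ in the hypothesis is what makes the self-improving inequality close.
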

\begin{proof}
We put $\bar \mu=$ the maximum eigenvalue of $B$. Then by nonnegative definiteness of $B$ we have $\bar\mu\ls \|B\|:=(\sum_{i,j=1}^nb^2_{ij})^{1/2}\ls \sqrt{n}\bar\mu.$
 Hence we have
\begin{equation*}
\begin{split}
\bar\mu\ls {\rm trace}(B)&={\rm trace}\big((I-A)B\big)+  {\rm trace}\big(AB\big) \\
&\ls \|I-A\|\cdot \|B\| + {\rm trace}\big(AB\big) \\
&\ls \big[n^2\cdot \big(\frac 1{2n^2}\big)^2\big]^{1/2}\cdot \sqrt{n}\bar\mu +C=\frac{\bar \mu }{2\sqrt n} +C.
\end{split}
\end{equation*}
This implies that $\bar \mu\ls 2C$. At last, by the assumption that $B$ is nonnegative definite, we have
$0\ls {\rm det}B\ls \bar\mu^n\ls (2C)^n.$
The proof is finished.
\end{proof}

The following lemma is a slight extension of  Jensen's lemma (see, for example, \cite[Lemma A.3]{c-i-lions92}).
\begin{lem}\label{lem-a2}
Let $U\subset M$ be a convex domain of $M$ and let  $h\in W^{1,2}(U )\cap C(U)$ satisfy $\Delta  h\ls \lambda$  on $U$ for some constant $\lambda\in\R$. Assume  that point $\hat x\in U$ is a uniquely local minimum point of function $h$ on $U$. Let $\{y^j\}_{1\ls j\ls n}$    be a local geodesic coordinate system  around $\hat x$. For  any  $p=(p^1,\cdots,p^n)\in \mathbb R^n$, we set
$$h_{p}(x):=h(x) +\sum_{i,j=1}^n p^j  y^j(x).$$
Then   for any $r,\delta>0$ sufficiently small, (namely, they are smaller than a constant $\delta_0$ depending on the bounds of sectional curvature on $U$,)  the set
$$K=\big\{x\in B_{r}(\hat x):\ \exists  p\in B_\delta(0)\ {\rm  for\ which}\ h_{p}\ {has \ a \ local \ minimum\ at }\ x\big\}$$
has positive measure, where $B_\delta(0):=\{p\in \mathbb R^n: \ \|p\|<\delta,\ \forall\ 1\ls j\ls n\}$.
\end{lem}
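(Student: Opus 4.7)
The plan is to adapt the classical Jensen argument (cf.\ \cite[Lemma A.3]{c-i-lions92}) to the Riemannian setting, replacing the semi-convexity that the standard argument invokes by the Laplacian bound together with Lemma~A.1. First I would choose geodesic normal coordinates $(y^1,\dots,y^n)$ centred at $\hat x$, shrinking the chart so that $|g^{ij}(y)-\delta^{ij}|\le 1/(2n^2)$ throughout — matching exactly the hypothesis of Lemma~A.1. Since $\hat x$ is an isolated local minimum, I pick $r>0$ small enough that $\eta:=\inf_{\partial B_r(\hat x)}h-h(\hat x)>0$, and then fix $\delta\in(0,\eta/(4r))$. For every $p\in\overline{B_\delta(0)}$ the perturbation $h_p=h+\sum p^jy^j$ is still strictly larger on $\partial B_r(\hat x)$ than at $\hat x$, so its infimum on $\overline{B_r(\hat x)}$ is attained at an interior point.

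Next I would regularise $h$ via the heat semigroup $h_t:=P_t h$, built from the Dirichlet Laplacian on a slightly larger neighbourhood. Because $P_t$ commutes with $\Delta_g$ and satisfies $P_t 1\le 1$, the smoothed function $h_t$ is $C^\infty$, converges uniformly to $h$ on $\overline{B_r(\hat x)}$, and satisfies $\Delta_g h_t\le\lambda$ classically. By uniform convergence, for $t$ sufficiently small and every $p\in B_{\delta/2}(0)$ the function $h_{p,t}:=h_t+\sum p^jy^j$ still attains its infimum on $\overline{B_r(\hat x)}$ at an interior point $x_{p,t}$; at this point, $\partial_k h_t(x_{p,t})=-p^k$ and the Euclidean Hessian $D^2 h_t(x_{p,t})$ is positive semidefinite.

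At this point Lemma~A.1 does the work. Writing $\Delta_g h_t = g^{ij}\partial_i\partial_j h_t - g^{ij}\Gamma^k_{ij}\,\partial_k h_t$ and using $|\partial_k h_t(x_{p,t})|\le\delta$ together with the boundedness of $g^{ij}\Gamma^k_{ij}$ on the chart gives $\mathrm{tr}\bigl((g^{ij}(x_{p,t}))(\partial_i\partial_j h_t(x_{p,t}))\bigr)\le\lambda+C\delta=:\lambda'$. Lemma~A.1, applied with $A=(g^{ij}(x_{p,t}))$ and $B=D^2 h_t(x_{p,t})$, then yields $\det D^2 h_t(x_{p,t})\le(2\lambda')^n$. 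Setting $K_t^{\ast}:=\{x_{p,t}:p\in B_{\delta/2}(0)\}$ and defining the Euclidean gradient map $\Phi_t(x):=-\nabla_{\mathrm{Euc}}h_t(x)$, one has $\Phi_t(K_t^{\ast})\supset B_{\delta/2}(0)$, so the area formula produces
\begin{equation*}
|B_{\delta/2}(0)|\le\int_{K_t^{\ast}}\det D^2 h_t(x)\,dx\le(2\lambda')^n\,|K_t^{\ast}|,
\end{equation*}
and $|K_t^{\ast}|$ is therefore bounded below by a positive constant independent of $t$.

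It remains to pass to the limit $t\to 0^+$. All $K_t^{\ast}$ lie in the bounded set $\overline{B_r(\hat x)}$, so the reverse Fatou inequality for sets gives $|\limsup_{t\to 0}K_t^{\ast}|\ge\limsup_{t\to 0}|K_t^{\ast}|>0$. If $x\in\limsup K_t^{\ast}$ then $x=\lim x_{p_k,t_k}$ for some sequence $t_k\to 0$ and $p_k\in\overline{B_{\delta/2}(0)}$; after extracting a subsequence $p_k\to p\in\overline{B_{\delta/2}(0)}\subset B_\delta(0)$, and the uniform convergence $h_{p_k,t_k}\to h_p$ forces $x$ to be a local minimum of $h_p$, so $x\in K$. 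Consequently $|K|\ge|\limsup K_t^{\ast}|>0$. The main obstacle is the pointwise Hessian estimate at $x_{p,t}$ coming from a merely distributional Laplacian bound; using heat-semigroup smoothing rather than a Euclidean mollification is what keeps this step clean, since it preserves the inequality $\Delta_g h_t\le\lambda$ classically and avoids estimating any commutator with $\Delta_g$.
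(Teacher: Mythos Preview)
Your outline matches the paper's proof: work in normal coordinates, handle the smooth case by combining Lemma~A.1 with the area formula for the Euclidean gradient map, and reduce the general case by heat-flow regularisation followed by a set-limit argument. The paper takes $A=(g^{ij}\sqrt{\det g})$ in Lemma~A.1 rather than your $A=(g^{ij})$, and closes with ``$K\supset\liminf_{t\to0}K_t$''; your reverse-Fatou step with $\limsup K_t^\ast$ is in fact the direction that gives the needed lower bound on the measure, though ``$x=\lim x_{p_k,t_k}$'' should read ``$x=x_{p_k,t_k}$ along a subsequence'' (it is the set-theoretic $\limsup$, not the Kuratowski one, that satisfies reverse Fatou).

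The one substantive gap is the regularisation step. Your claim that the Dirichlet semigroup preserves $\Delta_g h_t\le\lambda$ ``because $P_t$ commutes with $\Delta_g$ and $P_t1\le1$'' is not justified: since $h$ need not vanish on the boundary of the auxiliary neighbourhood, it is not in $D(\Delta_D)$, the commutation identity fails, and the boundary layer can push $\Delta P_t^Dh$ above $\lambda$ throughout the interior. Already $h\equiv-1$ on an interval with $\lambda=0$ gives $\Delta P_t^Dh=\partial_t P_t^Dh>0$ everywhere, since $-P_t^Dh$ is a survival probability and hence strictly decreasing in $t$. The paper uses the \emph{Neumann} flow on the convex domain $U$ instead, invoking $P_t1=1$ together with the maximum principle, and then uses the convexity of $U$ (via Li--Yau) to obtain uniform convergence $P_th\to h$ on $B_r(\hat x)\subset\subset U$. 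Switching to Neumann on $U$, or alternatively quantifying that the boundary-layer contribution to $\Delta P_t^Dh$ is exponentially small on the fixed compact $\overline{B_r(\hat x)}$ as $t\to0$, would close the gap in your write-up.
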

\begin{proof}
Fixed any sufficiently small $r$, if $\delta$ is  small enough,  then for any  $p\in B_\delta(0)$, there exists a local minimum of  $h_{p}$  lying  in the interior of  $B_{r}(\hat x)$, since $h$ has the unique minimum at $\hat x$. We will split the proof into two steps, as in the argument of \cite[Lemma A.3]{c-i-lions92}.

(i) We assume for the moment that $h$ is of $C^2$  near $\hat x$.
Let  $(g_{ij})_{n\times n}$  be  the local Riemannian metric around $\hat x$ with respect to the coordinate system $\{y^j\}_{1\ls j\ls n}$. There exists a number $r_0>0$, depending on the curvature on $U$, such that, for all  $1\ls i,j,k\ls n,$
$$|g_{ij}(x)-\delta_{ij}|\ls \frac{1}{10n^2},\quad |\partial_kg_{ij}(x)|\ls 1,\qquad \forall\ x\in B_{r_0}(\hat x).$$
Fixed any  $r\in (0,r_0)$, it suffices to show that $K$ has positive measure with respect to the Euclidean measure on $\big(B_{r}(\hat x),\delta_{ij}\big)$.

Now we consider the elliptic operator
$$Lh:=\sum_{1\ls i,j\ls n}\partial_i\big(a_{ij} \partial_{j}h\big)\qquad {\rm with}\quad  a_{ij}=g^{ij}\sqrt{{\rm det}(g_{ij})}.$$
It is easily seen that there exists  a constant $C(n)$ such that for all $x\in B_{r}(\hat x)$ and for all $1\ls i,j,k\ls n$, they hold
 \begin{equation}
 |a_{ij}(x)-\delta_{ij}|\ls \frac{1}{4n^2},\quad |\partial_ka_{ij}|\ls C(n),\quad Lh\ls C(n)\cdot\lambda.
 \end{equation}
Since $D h+p=D h_{p}=0$ holds for every  minimum points  of $h_p$, we have
 $$D h(K)\supseteq B_\delta(0).$$
 Here $Dh$ (and the following $D^2h $) is the  (2-order) differential of $h$ with respect to the Euclidean metric on $B_{r}(\hat x)$.
 Moreover,  for every  minimum points  of $h_p$,  we have that $ D^2h_p=(\partial_i\partial_jh_p) $ is nonnegative definite and that
\begin{equation*}
\begin{split}
\sum_{1\ls i,j\ls n}a_{ij}\partial_i\partial_j h_p &=\sum_{1\ls i,j\ls n}\partial_i\big(a_{ij}\partial_j h_p\big)\qquad ({\rm since}\ \partial_j h_p=0)\\
&=Lh+L(\sum_{k=1}^np_k\cdot x_k)=Lh+\sum_{1\ls i,j\ls n}p_j\cdot\partial_ia_{ij}\\
&\ls C(n)\cdot\lambda+C(n)n^2\delta\ls C_1(n,\lambda),
\end{split}
\end{equation*}
for a constant $C_1>0$. By using Lemma \ref{lem-a1} for $B=D^2h_p$, we have $|{\rm det}D^2h|=|{\rm det}D^2h_p|\ls (2C_1)^n $ for all $x\in K$. Thus,
$$\mathcal{L}^n\big(B_\delta(0)\big)\ls \mathcal{L}^n\big(Dh(K)\big)\ls\int_K|{\rm det}D^2h|dx\ls \mathcal{L}^n(K)\cdot (2C_1)^n,$$
where $\mathcal{L}^n(K)$ is the Euclidean measure of $K$. This completes the proof for the smooth case.

(ii) In the general case, in which $h$ need not to be smooth, we will approximate it via heat flows. This is the reason that we have to assume that $U$ is convex.

Let $\{P_th\}_{t\gs0}$ be the heat flow with Neumann boundary value condition on $U$, with the initial data $P_0h=h$. It is clear that $P_th$ is smooth for any $t>0$. By maximum principle, we have
$$\Delta P_th=P_t\Delta h\ls \lambda,\qquad \forall\ t>0.$$
The corresponding set $K_t$ obey the above estimates in (i) for small $t>0$.  In particular, the   measure of $K_t$ is bounded from below by a constant $C(\delta,\lambda,n)>0$ uniformly on $t>0$.

At last, by using  the convexity of the  boundary of $U$ and that  the  curvature of $M$ is bounded on $U$, (in particular, the Ricci curvature on $U$ is bounded from below,) the Li-Yau gradient estimates for solutions of the heat flow implies that $P_th$ converge  uniformly to $h$ on $B_{r}(\hat x)\subset\subset U$. Notice that $K\supset \liminf_{t\to0}K_t$. The result now follows.
\end{proof}
Now the perturbation Lemma \ref{lem4.3} is a corollary as follows.
\begin{cor}\label{lem-a3}
Let $U\subset M$ be a convex domain of $M$ and let  $h\in W^{1,2}(U )\cap C(U)$ satisfy $\Delta  h\ls \lambda$  on $U$ for some constant $\lambda\in\R$. Assume  that point $\hat x\in U$ is one of   minimum points of function $h$ on $U$. Assume a subset $A\subset U$ has full measure.
Then   for any $r,\delta>0$ sufficiently small, there exists a smooth function $\phi$ on a neighborhood of $\hat x$, $B_{r_0}(\hat x)$, such that
$h+\phi$ has a local minimum point in $B_{r}(\hat x)\cap A$ and that
$$|\phi|+|\nabla\phi|+|{\rm Hess}(\phi)|\ls \delta,\qquad \forall\ x\in B_{r_0}(\hat x).$$
\end{cor}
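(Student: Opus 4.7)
The goal is to upgrade Lemma \ref{lem-a2} in two ways: (i) remove the requirement that $\hat x$ be a \emph{unique} local minimum, and (ii) place the perturbed minimum inside the full-measure set $A$, while keeping the perturbation $\phi$ smooth and controlled up to Hessian. The strategy is to first bump $h$ by a tiny smooth quadratic to restore uniqueness, then invoke Lemma \ref{lem-a2} to obtain a set of positive measure of perturbed minima, intersect with $A$, and finally assemble $\phi$ from the bump plus the linear perturbation produced by Lemma \ref{lem-a2}.

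Here is the plan in detail. Fix normal coordinates $\{y^j\}_{1\ls j\ls n}$ around $\hat x$ with $y^j(\hat x)=0$, valid in some ball $B_{r_1}(\hat x)\subset U$. Given $r,\delta>0$ small, choose auxiliary parameters $\varepsilon_1>0$ and $\delta_1>0$ (both to be taken $\ll \delta$) and set
\[
\psi(x):=\varepsilon_1\sum_{j=1}^n (y^j(x))^2,\qquad \tilde h(x):=h(x)+\psi(x).
\]
Since $\psi$ is smooth with $|\psi|+|\nabla\psi|+|\mathrm{Hess}\,\psi|\ls C\varepsilon_1$ on $B_{r_1}(\hat x)$ for a constant depending on the metric, we have $\Delta\tilde h\ls \lambda+C\varepsilon_1=:\tilde\lambda$ in the distributional sense. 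Moreover, by the strict convexity of $\psi$ near $\hat x$ and the fact that $\hat x$ minimizes $h$, the point $\hat x$ is a \emph{strict} minimum of $\tilde h$ on some small ball $B_{r_2}(\hat x)$ (shrinking $r_2$ if necessary so that $\hat x$ is the unique minimum there).

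Next, apply Lemma \ref{lem-a2} to $\tilde h$ on $B_{r_2}(\hat x)$ with the perturbation radius $\delta_1$: for any $r\in(0,r_2)$, the set
\[
\tilde K:=\big\{x\in B_r(\hat x):\ \exists\, p\in B_{\delta_1}(0)\ \text{with}\ \tilde h_p:=\tilde h+\textstyle\sum_j p^j y^j\ \text{having a local min at}\ x\big\}
\]
has positive Lebesgue measure. Because $A\subset U$ has full measure and measures are preserved under the coordinate chart up to a bounded Jacobian, $\tilde K\cap A$ has positive measure, hence is nonempty. Pick any $x_*\in \tilde K\cap A$ and the corresponding $p_*\in B_{\delta_1}(0)$, and define
\[
\phi(x):=\psi(x)+\sum_{j=1}^n p_*^{\,j} y^j(x)=\varepsilon_1\sum_j (y^j)^2+\sum_j p_*^{\,j}y^j,
\]
which is smooth on $B_{r_1}(\hat x)$. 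By construction $h+\phi=\tilde h_{p_*}$ attains a local minimum at $x_*\in B_r(\hat x)\cap A$.

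It remains to verify the $C^2$ bound $|\phi|+|\nabla\phi|+|\mathrm{Hess}\,\phi|\ls\delta$ on a neighborhood $B_{r_0}(\hat x)\subset B_{r_1}(\hat x)$. A direct computation in the normal coordinates, together with standard bounds $|g_{ij}-\delta_{ij}|=O(r_0^2)$, $|\partial g_{ij}|=O(r_0)$, gives
\[
|\phi|\ls \varepsilon_1 n r_0^2+\sqrt{n}\,\delta_1 r_0,\quad |\nabla\phi|\ls C(2\varepsilon_1 r_0+\delta_1),\quad |\mathrm{Hess}\,\phi|\ls C(\varepsilon_1+\delta_1 r_0),
\]
for a constant $C$ depending only on the metric on $B_{r_1}(\hat x)$. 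Choosing first $\varepsilon_1$ small (so that $C\varepsilon_1<\delta/3$), then $\delta_1$ small (so that $C\delta_1<\delta/3$), and finally $r_0$ small (so that the remaining $r_0$-factors contribute $<\delta/3$), we achieve $|\phi|+|\nabla\phi|+|\mathrm{Hess}\,\phi|\ls\delta$ on $B_{r_0}(\hat x)$, completing the proof.

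The main technical point is already contained in Lemma \ref{lem-a2}; the only subtlety here is arranging the order of the smallness parameters $(\varepsilon_1,\delta_1,r,r_0)$ so that the quadratic bump does not overwhelm the $C^2$ budget $\delta$, and noting that the uniqueness hypothesis of Lemma \ref{lem-a2} is restored automatically by adding a strictly convex quadratic. Intersecting the positive-measure set $\tilde K$ with the full-measure set $A$ is immediate, so no further work is needed to land the perturbed minimum inside $A$.
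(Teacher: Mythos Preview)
Your proposal is correct and follows essentially the same approach as the paper: add a small quadratic bump (the paper uses $\delta|\hat x x|^2/(10n)$, you use $\varepsilon_1\sum_j (y^j)^2$, which coincide to leading order in normal coordinates) to make the minimum strict, invoke Lemma~\ref{lem-a2} to obtain a positive-measure set of perturbed minima, intersect with the full-measure set $A$, and assemble $\phi$ as the sum of the quadratic and the linear term. Your write-up is in fact more careful than the paper's in spelling out the intersection with $A$ and the order in which the smallness parameters are chosen.
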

\begin{proof}
Fix any $r,\delta$ sufficiently small. We put $h_1:=h+\delta|\hat xx|^2/(10n).$ Then $h_1$ has a unique minimum  at $\hat x$. Since $\Delta h_1\ls \Delta h+C(n,k_0)\cdot\delta$ by Laplacian comparison on $M$, the above Lemma implies that $h_1+\sum_{j=1}^np^jy^j$ has a local minimum at a point in $B_{r}(\hat x)\cap A$ and that $0\ls p^j\ls \delta/2,$ $1\ls j\ls n$.
Now, the function
$$\phi:=\delta|\hat xx|^2/(10n)+\sum_{j=1}^np^jy^j$$
defined on a coordinate neighborhood   $B_{r_0}(\hat x)$. Notice that $|{\rm Hess}y^j|\ls C(k_0)$ for some constant $C(k_0)$ depending on $k_0$, a  bound of $|sec_M|$ on $U$. This implies that
  $$|\phi|+|\nabla\phi|+|{\rm Hess}(\phi)|\ls \delta\cdot C(n,k_0),\qquad \forall\ x\in B_{r_0}(\hat x).$$
The proof is finished.
\end{proof}

\end{document}